\documentclass[a4paper,draft]{amsart}
\usepackage{amsthm}
\usepackage{amscd}
\usepackage[final]{graphicx}
\usepackage{amssymb,amsmath}  %
\usepackage{latexsym} %
\usepackage[english]{babel}
\usepackage{tabularx}
\usepackage{xypic}
\xyoption{curve}
\usepackage{longtable}

\newtheorem{lem}{Lemma}%
    \newtheorem{prop}[lem]{Proposition}
    \newtheorem{thm}[lem]{Theorem}
    \newtheorem{cor}[lem]{Corollary}
    \newtheorem*{qs}{Question} 
\theoremstyle{remark}
    \newtheorem{rem}[lem]{Remark}

\DeclareMathOperator{\Sym}{Sym}
\DeclareMathOperator{\Aut}{Aut}

\DeclareMathOperator{\rank}{rank}
\DeclareMathOperator{\Hom}{Hom}

\newcommand{\Sp}{\mathop{\mathrm {Sp}}\nolimits}
\newcommand{\GL}{\mathop{\mathrm {GL}}\nolimits}
\newcommand{\SL}{\mathop{\mathrm {SL}}\nolimits}
\def\s{\mathfrak S}

\newcommand{\op}[1]{{\operatorname{#1}}}
\def\cA{\mathcal A}
\def\cO{\mathcal O}
\def\cX{\mathcal X}
\def\cP{\mathcal P}

\newcommand{\V}{\bb V}

\newcommand{\ab}[1]{\cA_{{#1}}}
\newcommand{\xb}[1]{\cX_{{#1}}}
\newcommand{\Vor}{{\op{Vor}}}
\newcommand{\Perf}{{\op{perf}}}
\newcommand{\Sat}{{\op{Sat}}}
\newcommand{\Igu}{{\op{Igu}}}
\newcommand{\tor}{{\op{tor}}}
\newcommand{\AVOR}[1]{{\cA_{#1}^{\Vor}}}
\newcommand{\APERF}[1]{{\cA_{#1}^{\Perf}}}
\newcommand{\AIGU}[1]{{\cA_{#1}^{\Igu}}}
\newcommand{\ATOR}[1]{{\cA_{#1}^{\tor}}}
\newcommand{\XVOR}[1]{{\cX_{#1}^{\Vor}}}

\newcommand{\ASAT}[1]{{\cA_{#1}^{\Sat}}}
\newcommand{\M}[1]{\mathcal M_{{#1}}}
\newcommand{\Mm}[2]{\mathcal M_{{#1},{#2}}}
\newcommand{\Mb}[1]{\overline{\mathcal M}_{{#1}}}

\newcommand{\coh}[3][\Q]{H^{#3}(#2;#1)}
\newcommand{\cohc}[3][\Q]{H_c^{#3}(#2;#1)}

\newcommand{\Ll}{\mathbf L}

\newcommand{\eul}[1]{\mathbf{e}_{\op{Hdg}}({#1})}
\newcommand{\sing}{{\op{sing}}}

\def\bb{\mathbb}
\def\la{\langle}
\def\ra{\rangle}
\def\phi{\varphi}

\def\Z{\bb{Z}}
\def\Q{\bb{Q}}
\def\R{\bb{R}}
\def\C{\bb{C}}

\def\H{\bb{H}}

\newcommand{\Pp}[1]{\bb{P}^{#1}}

\def\co{\colon\thinspace}
\def\pu{\bullet}
\def\mil{\mspace{-9mu}}

\numberwithin{equation}{section}

\begin{document}

\author{Klaus Hulek}
\address{Leibniz Universit\"at Hannover, Institut f\"ur Algebraische Geometrie, Wel\-fen\-gar\-ten~1, D-30167 Hannover, Germany}
\email{hulek@math.uni-hannover.de}
\author{Orsola Tommasi}
\address{Leibniz Universit\"at Hannover, Institut f\"ur Algebraische Geometrie, Wel\-fen\-gar\-ten~1, D-30167 Hannover, Germany}
\email{tommasi@math.uni-hannover.de}
\subjclass[2000]{Primary 14K10; Secondary 14F25, 14C15, 14D22}
\keywords{Abelian varieties, Voronoi compactification, Chow ring, cohomology ring}

\title[Cohomology of the second Voronoi compactification of $\mathcal A_4$]{Cohomology of the second Voronoi compactification of $\mathcal A_4$}
\date{January 2012}

\begin{abstract}
In this paper we compute the cohomology groups of the second Voronoi and the perfect cone compactification $\AVOR 4$ 
and $\APERF 4$ respectively, of the moduli space of abelian fourfolds in degree $\leq 9$. 
The main tool is the investigation of the strata of $\AVOR 4$ corresponding to semi-abelic varieties with constant torus rank. 
\end{abstract}
\maketitle

\section{Introduction and plan}

The moduli space $\ab g$ of principally polarized abelian varieties of genus $g$ are much studied objects 
in algebraic geometry. Although much progress has been made in
understanding the geometry of these spaces, we still know relatively little about the cohomology or the Chow
groups of $\ab g$ and its compactifications. 
These are difficult questions even for low genus.
Mumford in his seminal paper \cite{Mu-curves} computed the Chow ring of $\overline{\mathcal M}_2$, or what is the same,
of the second Voronoi compactification $\AVOR 2$. It was also in this paper that he laid the foundations 
for the study of the Chow ring of $\overline{\mathcal M}_g$ in general. 
Lee and Weintraub \cite{LW} have investigated the cohomology of certain level covers of $\AVOR 2$. 
The cohomology of $\ab 3$ and of the Satake compactification $\ASAT 3$ were determined by Hain \cite{Hain}, while
the Chow group of the second Voronoi compactification 
$\AVOR 3$ had earlier been computed by van der Geer \cite{vdG}. The authors of this paper proved in \cite{HT}
that the Chow ring and the cohomology ring of $\AVOR g$ are isomorphic for $g=2,3$. 

Very little is known about the topology of $\ab g$ and its compactifications
in general. A positive exception is given by the subring generated by the Chern classes $\lambda_i$ of the Hodge bundle in the Chow ring or the cohomology ring of $\ab g$. By \cite{vdG-lambda} this subring is known explicitly; in particular, it is generated by the odd degree Hodge classes $\lambda_{2i+1}$. 
Furthermore, by a classical result by Borel \cite{borel} the Hodge classes $\lambda_{2i+1}$ generate the stable cohomology of $\ab g$, which is defined in terms of the natural maps $\ab {g'}\hookrightarrow \ab g$ for $g'<g$ given by multiplication with a fixed abelian variety of dimension $g-g'$. 
Note that by the construction of  $\ab g$ as a quotient of Siegel space, the rational cohomology of $\ab g$ coincides with the group cohomology of the symplectic group $\Sp(2g,\Z)$. 

In this paper we investigate the case of genus $4$ of whose cohomology very little is known. 
There are two reasons why we believe that it is worth the effort to undertake this study. One is that
the spaces $\ab 2$ and $\ab 3$ are very close to the moduli spaces $\M 2$ and $\M 3$ whose cohomology is 
rather well understood, whereas $\ab 4$ is the first Siegel moduli space where the Torelli map fails 
to be birational and thus one might expect new phenomena. The second reason is
that, as our discussion of the boundary strata shows, the complexity of the structure of $\ab g$
and its compactifications grows dramatically with $g$. At the moment calculations on the cohomology of $\ab 5$, 
or any of its compactifications, seems far out of reach and thus $\ab 4$ is the only remaining low genus case 
where the computation of the entire cohomology currently seems within reach. 

In this paper we thus investigate the cohomology of toroidal compactifications of $\ab 4$. %
 In general there are several meaningful compactifications of $\ab g$. Besides the second Voronoi
compactification $\AVOR g$ there is the perfect compactification $\APERF g$, given by the second Voronoi
decomposition and the perfect cone (or first Voronoi) decomposition respectively, as well as the
Igusa compactification $\AIGU g$. It was shown by Alexeev \cite{Al} and Olsson \cite{Ol}
that (at least up to normalization) $\AVOR g$ represents a geometric functor given by stable semi-abelic varieties. 
On the other hand 
$\APERF g$ is, as was proved by Shepherd-Barron \cite{S-B}, a canonical model in the sense of Mori theory, i.e.
its canonical bundle is ample, if $g\geq 12$. Finally, Igusa constructed $\AIGU g$ as a partial blow-up of
$\ASAT g$ and it was shown by Namikawa \cite{Nam} that Igusa's model is the toroidal compactification defined by the
central cone decomposition. 
In genus $g \leq 3$ all of the above toroidal compactifications coincide. 
In genus $4$ the Igusa and the perfect cone decomposition coincide and the second Voronoi compactification $\AVOR 4$ 
is a blow-up of $\APERF 4$. However, for general $g$ all three compactifications are different. 

The main result of 
our paper is the determination  of the 
Betti numbers of $\APERF 4$ of degree less than or equal to $9$ and of all Betti numbers of $\AVOR 4$ with 
the exception of the middle Betti number $b_{10}$. This reduces the problem of the computation of the cohomology of $\AVOR 4$ to the computation of the Euler number of $\ab 4$, which is an independent problem in its
own right. Indeed, one can compute 
the Euler number of level covers ${\mathcal A}_4(n)$ for $n\geq 3$ by Hirzebruch--Mumford proportionality.
{}From this one could compute $e({\mathcal A}_4)$ if one had a complete classification of torsion elements 
in the group $\Sp(8,\Z)$. Although this is not known, it does not seem an impossible task to obtain such a 
classification. This is, however, a hard problem  which requires different methods from the 
the ones used in this paper; therefore, we will not approach it here.

Our approach involves computing a spectral sequence converging to the cohomology of $\AVOR 4$. As a by-product, this spectral sequence  determines the long exact sequence relating the cohomology with compact support of $\AVOR 4$ with that of $\ab4$ and of its boundary. 
For this reason, a natural application of our analysis is to obtain evidence on  the existence of non-trivial cohomology classes of $\ab4$ in different degrees, by giving \emph{lower bounds} for the Betti numbers of $\ab4$.
However, all contributions from the cohomology of $\ab4$ to our computation can be explained using either the Hodge classes or a certain non-algebraic class related to the one in $\coh{\ab3}6$ described in \cite[Thm 1]{Hain}.
This gives rise to the following question:
\begin{qs}
Is the rational cohomology of $\ab 4$ generated by the Hodge classes $\lambda_1,\lambda_3$ and by one non-algebraic cohomology class of Hodge weight $18$ arising in degree $12$? 
\end{qs}

The starting point of our investigations is the fact that every toroidal compactification $\ATOR g$
admits a  map $\phi_g\co \ATOR g \longrightarrow \ASAT g$. We recall that
$$
\ASAT g = \ab g \sqcup \ab {g-1} \sqcup \ldots \sqcup \ab 0,
$$
which allows us to construct a stratification of $\ATOR g$
by considering the closed loci $\beta^{(g)}_i=\beta_i=\phi_g^{-1}(\ASAT {g-i})$ and their open parts
$\beta_i^0=\beta_i \setminus \beta_{i+1}=\phi_g^{-1}(\ab {g-i})$. Each %
stratum 
$\beta_i^0$ is itself the disjoint union of locally closed substrata that are  quotients of torus bundles over the product of a certain number of copies of the universal family
$\cX_{g-i}$ over $\ab {g-i}$ by finite groups.  
The strategy is then to compute the cohomology with compact support 
of each of these substrata using Leray spectral sequences and then
to glue these strata by Gysin spectral sequences to compute the cohomology with compact support of $\beta_i^0$. 
Although this is a natural approach, we are not aware that it has been used in this form before apart 
from \cite{HT} where
it was applied to the case of genus $3$. The reader will however notice that the complexity encountered
in the present case is of a very different level: we need a full understanding of the Voronoi
decomposition in genus $4$, which in this case can no longer be deduced from the knowledge of the basic cone 
alone.

The use of Leray spectral sequences requires to know the cohomology with compact support of $\ab{g-i}$ not only with constant coefficients, but also with coefficients in certain symplectic local systems of low weight. In the case of $i=1,2$ we deduce this information from results on the cohomology of moduli spaces of pointed curves. Passing from the moduli space of curves to the moduli space of abelian varieties produces a small ambiguity, which does not influence our final result, mainly because it disappears at the level of Euler characteristics.
Up to this ambiguity, we are able to obtain complete results for the cohomology with compact support of all strata contained in the boundary as well as of the closure $\overline{\mathcal J}_4$ of the Jacobian locus in $\ab4$ and we 
believe that this is of some independent interest.

In the case of the cohomology of $\ab4$ itself, there are two facts which help us. The first is that the complement in $\ab4$ of the closure  of the locus of jacobians has  a smooth affine variety as coarse moduli space. This implies that its cohomology with compact support is trivial if the degree is smaller than $10$ and thus that the cohomology with compact support of $\ab4$ agrees with that of $\overline{\mathcal J}_4$ in degree $\leq 9$.
The second is that $\AVOR 4$ is %
(globally) the quotient of a smooth projective scheme by a finite group.
This implies that its cohomology satisfies Poincar\'e duality, and, more specifically, that 
its cohomology in degree $k$ carries a pure Hodge structure of weight $k$.

In the case of $\AVOR 4$, putting the cohomological information from all strata $\beta^0_i$ together  
yields Table~\ref{Biggysin}, from which we can deduce Theorem~\ref{mainthm} by using the Gysin
spectral sequence associated to the stratification given by the $\beta_i$. 
A consequence of this spectral sequence is that the cohomology groups with compact support of $\ab 4$ in degree $\leq 9$ are sufficient to determine all cohomology groups of $\AVOR 4$ of degree $\neq 10$. In turn, the cohomology groups of $\AVOR 4$ in degree $\geq 11$ so obtained can be used to gain information on the cohomology with compact support of $\ab4$ in degree $\geq 11$, thus leading (using Poincar\'e duality) to the question formulated above.
Finally, we 
obtain the Betti numbers for $\APERF 4$ in Theorem~\ref{mainthm_igu} by using the fact that $\AVOR 4$ is a
blow-up of $\APERF 4$ in one point.

The plan of the paper is as follows. 
In \S\ref{s:mainthm} we prove the main results using cohomological informations on the strata $\beta_i^0$. The geometrical study of each of the five strata $\beta_i^0$ is performed in the following five sections. Finally, in the appendix we collect and prove all results on the cohomology of local systems on $\ab2$ and $\ab3$ used in Section \ref{s:beta_1} and \ref{s:beta_2}.

\subsection*{Acknowledgments}
Partial support from DFG under grants Hu 337/6-1 and Hu 337/6-2 is gratefully acknowledged. During the preparation of this paper, the second author has been partially supported by the programme \emph{Wege in die Forschung II} of Leibniz Universit\"at Hannover. We would like to thank MSRI for hospitality during the first part of the preparation of this paper. 
Finally, we thank Frank Vallentin for untiringly answering our questions about the second Voronoi decomposition.

\subsection*{Notation}
\ 
\begin{longtable}{lp{10cm}}
$\ab g$ & moduli stack of principally polarized abelian varieties of genus~$g$\\[1pt]
$\cX_g$ & universal family over $\ab g$\\[1pt]
$\V_{\lambda_1,\dots,\lambda_g}$ & rational local system on $\ab g$ induced by the $\Sp(2g,\Q)$-representation indexed by the partition $(\lambda_1,\dots,\lambda_g)$\\[1pt]
$\ASAT g$ & Satake compactification of $\ab g$\\[1pt]
$\AVOR g$ &  Voronoi compactification of $\ab g$ \\[1pt]
$\XVOR g$& universal family over $\AVOR g$\\[1pt]
$\APERF g$ &  perfect cone compactification of $\ab g$ \\[1pt]
$\AIGU g$ &  Igusa compactification of $\ab g$ \\[1pt]
$\Mm gn$ & moduli stack of non-singular curves of genus $g$ with $n$ marked points\\[1pt]
\multicolumn{2}{l}{$\M g:=\Mm g0$}\\[1pt]
$\s_d$ & symmetric group in $d$ letters\\[1pt]
$\Sym_{\geq0}^2(\R^g)$ & space of real positive semidefinite quadratic forms in $\R^g$\\[1pt]
$\la \phi_1,\dots,\phi_r\ra$& convex cone generated by the half rays $\R_{\geq 0}\phi_1$, \dots, $\R_{\geq 0}\phi_r$
\end{longtable}
\smallskip
\setcounter{table}{0}

For every $g$, we denote by $\phi_g\co \AVOR g \rightarrow \ASAT g$ (respectively, $\psi_g\co \APERF g\rightarrow\ASAT g$) the natural map from the 
Voronoi (respectively, perfect cone) to the Satake compactification.
Let $\pi_g\co \XVOR g \to \AVOR g$ be the universal family, $q_g\co \XVOR g \to \XVOR g/\pm 1$ the quotient
map from the universal family to the universal Kummer family and $k_g\co \XVOR g/\pm 1 \to \AVOR g$
the universal Kummer morphism.

For $0\leq i\leq g$, we set $\beta_i^0=\phi_g^{-1}(\ab {g-i})\subset \AVOR g$, $\beta_i=\phi_g^{-1}(\ASAT {g-i})\subset \AVOR g$ and
$\beta_i^\Perf=\psi_g^{-1}(\ASAT {g-i})\subset \APERF g$.

We denote the Torelli map in genus $g$ by $\tau_g\co\M g\to \ab g$ its image, the 
Jacobian locus, by $\mathcal J_g = \tau_g(\M g)$ and closure of the image in $\ab g$ by $\overline {\mathcal J_g}$.

Throughout the paper, we work over the field $\C$ of complex numbers. All cohomology groups we consider will have rational coefficients. Since the rational cohomology of a Deligne--Mumford stack coincides with the rational cohomology of its coarse moduli space, we will sometimes abuse notation and denote stack and coarse moduli space with the same symbol.

In this paper, we make extensive use of mixed Hodge structures, focussing mainly on their weight filtration. We will denote by $\Q(-k)$ the Hodge structure of Tate of weight $2k$. For two mixed Hodge structures $A,B$ we will denote by $A\oplus B$ their direct sum and by $A+B$ any extension 
$$
0\rightarrow B \rightarrow E \rightarrow A\rightarrow 0.
$$
Furthermore, we will denote Tate twists of mixed Hodge structures by $A(-k) = A\otimes \Q(-k)$.

\section{Main theorems}\label{s:mainthm}

\begin{thm}\label{mainthm}
The cohomology of $\AVOR 4$ vanishes in odd degree and is algebraic in all even degrees, with the only possible exception of degree $10$. The Betti numbers are given by
$$
\begin{array}{c|ccccccccccc}
i&0&2&4&6&8&10&12&14&16&18&20\\\hline
b_i&1&3&5&11&17&10+e(\ab4)&17&11&5&3&1
\end{array}
$$
where $e(\ab4)$ denotes the Euler number of $\ab4$.
\end{thm}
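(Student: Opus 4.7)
The plan is to exploit the stratification $\AVOR 4 = \bigsqcup_{i=0}^4 \beta_i^0$ by torus rank coming from the map $\phi_4\co \AVOR 4 \to \ASAT 4$, compute $H^\bullet_c$ of each stratum, and then glue via the Gysin spectral sequence associated to the filtration by the closed strata $\beta_i$.

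First I would carry out a detailed analysis of each of the five open strata $\beta_i^0$ for $i=0,1,2,3,4$. Each $\beta_i^0$ is, by construction, a finite disjoint union of locally closed substrata, each of which is a finite quotient of a torus bundle over a product of copies of the universal family $\cX_{4-i} \to \ab{4-i}$. Their cohomology with compact support can be computed by a Leray spectral sequence for such a substratum, whose $E_2$-page involves $H^\bullet_c$ of $\ab{4-i}$ with coefficients in symplectic local systems $\V_{\lambda_1,\dots,\lambda_{4-i}}$ of low weight (arising from the fibre cohomology of the torus bundle together with the symmetric group invariants that come from taking the finite quotient). For $i=3,4$ the base is $\ab1$ or a point, so this input is elementary. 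For $i=1,2$, the required cohomology of local systems on $\ab3$ and $\ab2$ is gathered in the appendix, where the translation from the moduli space of pointed curves to $\ab{g-i}$ is carried out; any ambiguity introduced there is absorbed in the middle degree. Combinatorially, the decomposition of $\beta_i^0$ into substrata requires a complete understanding of the second Voronoi fan in genus $4$, which is where the real work lies compared with the genus $3$ case treated in \cite{HT}.

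Once the substrata are understood, I would assemble the $E_1$-page of the Gysin spectral sequence for the filtration by the $\beta_i$; this is the content of Table~\ref{Biggysin} referenced in the introduction. The key input for the open part $\beta_0^0 = \ab 4$ is the geometric fact that $\ab 4 \setminus \overline{\mathcal J}_4$ has a smooth affine coarse moduli space, so its cohomology with compact support vanishes in degree $<10$; therefore $H^k_c(\ab 4) \cong H^k_c(\overline{\mathcal J}_4)$ for $k \leq 9$, and the latter is accessible from the cohomology of $\overline{\mathcal M}_4$. This pins down the contribution of $\beta_0^0$ in all degrees except the middle one, with the only undetermined quantity being the Euler characteristic $e(\ab 4)$, which then tracks through the spectral sequence as the $e(\ab 4)$ term in $b_{10}$.

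At this point I would use the global quotient structure of $\AVOR 4$ (smooth projective scheme by a finite group): this gives Poincar\'e duality, purity of the Hodge structure on $H^k(\AVOR 4)$ of weight $k$, and forces all differentials between classes of different weights to vanish. Purity collapses extensions between local system contributions of different Tate twists in the spectral sequence, and Poincar\'e duality symmetrizes the Betti numbers, which is what lets me complete the table even in degrees where only the cohomology with compact support in low degree has been computed on the $\ab 4$ side. Vanishing in odd degrees and algebraicity in even degrees $\neq 10$ then follow because every surviving contribution on the $E_\infty$-page is of Tate type concentrated in even total degree; only degree $10$ can receive non-algebraic or odd contributions coming from the (unknown) cohomology of $\ab 4$ itself.

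The main obstacle I expect is the genus $4$ combinatorics of the second Voronoi decomposition governing the substrata of $\beta_i^0$ for $i=2,3,4$: unlike in genus $3$, the Voronoi fan is not determined by the basic cone alone, and one must enumerate the orbits of cones under $\GL(4,\Z)$ together with their stabilizers in order to identify the finite groups acting on each torus bundle. A secondary difficulty is the careful bookkeeping of weights in the various Leray and Gysin spectral sequences, which is needed to ensure that purity forces the collapse at the appropriate page and that the only indeterminacy is confined to $b_{10}$.
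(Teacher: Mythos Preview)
Your proposal is essentially the paper's proof: the same torus-rank stratification, the same Leray/Gysin computations on the strata with local-system input on $\ab2$ and $\ab3$, the same use of affineness of $\ab 4 \setminus \overline{\mathcal J}_4$ to control the open stratum in degree $\leq 9$, and the same purity plus Poincar\'e duality argument to finish, with $b_{10}$ recovered from additivity of Euler characteristics. The only cosmetic difference is that the paper builds the splitting of $\ab 4$ into $\overline{\mathcal J}_4$ and its complement directly into a six-step filtration $\{T_i\}$ (rather than your five-step $\{\beta_i\}$), so that Table~\ref{Biggysin} already has separate columns for $\overline{\mathcal J}_4$ and $\ab 4 \setminus \overline{\mathcal J}_4$.
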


The only missing information needed to compute all Betti numbers of $\AVOR 4$ is the Euler number.
As we shall see, we are able to compute the Euler numbers of all strata $\beta_i^0$ for $i\geq 1$, and thus, as already mentioned,
it would suffice to compute the Euler number of the space ${\mathcal A}_4$ itself (see the introduction for comments on this). 

\begin{thm}\label{mainthm_igu}
The Betti numbers of $\APERF 4$ in degree $\leq 9$ are given by
$$
\begin{array}{c|cccccccccc}
i&0&1&2&3&4&5&6&7&8&9\\\hline
b_i&1&0&2&0&3&0&8&0&14&0
\end{array}
$$
Moreover, all cohomology classes of degree $\leq 9$ are algebraic.
\end{thm}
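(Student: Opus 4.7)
The starting point is the fact, recalled in the introduction, that the natural morphism $\pi\co\AVOR 4\to\APERF 4$ is a proper birational map which is an isomorphism outside a single point $p\in\APERF 4$. This $p$ is the closed stratum in $\APERF 4$ corresponding to the top-dimensional cone $\sigma_{II}^{(4)}$ of the perfect cone decomposition, and $\AVOR 4$ arises from $\APERF 4$ by subdividing this cone in the second Voronoi sense. Let $E=\pi^{-1}(p)$ denote the exceptional divisor.

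The plan is to read off the cohomology of $\APERF 4$ from the ``abstract blow-up'' long exact sequence
\begin{equation}\label{eq:MVapp}
\cdots \to H^{k-1}(E)\to H^k(\APERF 4)\to H^k(\AVOR 4)\to H^k(E)\to H^{k+1}(\APERF 4)\to\cdots,\qquad k\geq 1,
\end{equation}
which comes from the proper modification distinguished triangle, using the fact that $\pi$ has connected fibers and that $p$ is a point. The inputs needed are the Betti numbers of $E$, those of $\AVOR 4$ (given by Theorem~\ref{mainthm}), and sufficient information on the restriction map $H^*(\AVOR 4)\to H^*(E)$.

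First I would identify $E$ as the toric variety (or quotient stack) determined by the subdivision of $\sigma_{II}^{(4)}$ in the second Voronoi fan, modulo the stabilizer of $\sigma_{II}^{(4)}$ inside $\GL(4,\Z)$. From the combinatorics of this subdivision one reads off the Betti numbers of $E$, which are all algebraic and concentrated in even degree. A consistency check is provided by the Euler-characteristic identity $e(\APERF 4)=e(\AVOR 4)-e(E)+1$.

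Second I would examine the restriction map $H^*(\AVOR 4)\to H^*(E)$ in degree $\leq 8$. By Theorem~\ref{mainthm} the source is algebraic and generated by explicit classes (boundary divisor classes and Hodge classes), whose restrictions can be computed by intersection theory on the toric $E$; the class $\ppe$ in particular is of this nature. The crucial claim to verify is that this restriction is surjective in each even degree up to $8$. Given the expected values (compatible with $b_{2j}(\APERF 4)=b_{2j}(\AVOR 4)-b_{2j}(E)$ and $b_{2j+1}(\APERF 4)=0$), this reduces to checking that the candidate generators for $H^{2j}(E)$ indeed lift to $\AVOR 4$.

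Once surjectivity is established, \eqref{eq:MVapp} forces $H^{2j+1}(\APERF 4)=0$ and gives $b_{2j}(\APERF 4)=b_{2j}(\AVOR 4)-b_{2j}(E)$ for $2j\leq 8$, yielding the stated table. Algebraicity in degree $\leq 9$ follows because in this range $\pi^*\co H^k(\APERF 4)\hookrightarrow H^k(\AVOR 4)$ is injective with image inside the algebraic part of $H^k(\AVOR 4)$. The main obstacle is the concrete determination of $E$ together with the restriction morphism: one needs an explicit handle on the subdivision of $\sigma_{II}^{(4)}$ in the second Voronoi fan and on the intersection numbers of the generators of $H^*(\AVOR 4)$ with the toric strata of $E$. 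Once these data are in place, the rest is a mechanical computation in the long exact sequence \eqref{eq:MVapp}.
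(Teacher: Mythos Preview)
Your approach is genuinely different from the paper's, and it has a real gap.

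\textbf{What the paper does.} The paper does not use the blow-up long exact sequence at all. Instead it runs the \emph{same} Gysin spectral sequence as in the proof of Theorem~\ref{mainthm}, for the stratification $\{T_i^\Perf\}$ of $\APERF 4$ by torus rank. Since $\AVOR 4\to\APERF 4$ is an isomorphism outside $\beta_4$, the columns $p\ge 2$ of Table~\ref{Biggysin} are literally unchanged, as are all differentials not touching the first column. Only the first column is replaced, using $\coh{\beta_4^\Perf}\pu$ from Theorem~\ref{Igusa-beta4} in place of $\coh{\beta_4}\pu$. The purity argument that forced the $E_\infty$ terms in Table~\ref{einfty} then goes through verbatim except for one weight-$2$ class in degree $6$, which is handled by comparing with the corresponding class on $\beta_4$. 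No new surjectivity statement is needed.

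\textbf{Where your argument is incomplete.} Your long exact sequence is correct (it is the Mayer--Vietoris sequence for the cdh square with center a point), and injectivity of $\pi^*$ is fine. But the entire computation hinges on the surjectivity of the restriction $H^{2j}(\AVOR 4)\to H^{2j}(E)$ for $2j\le 8$, which you do not prove. You say it ``reduces to checking that the candidate generators for $H^{2j}(E)$ indeed lift'' and call this ``the main obstacle''; that is precisely the content of the theorem in your approach, and it is left undone. Note that purity alone does not force surjectivity here: $H^k(E)$ and $H^k(\AVOR 4)$ are pure of weight $k$, but $H^{k+1}(\APERF 4)$ is only known to have weights $\le k+1$, so a nonzero connecting map $H^k(E)\to H^{k+1}(\APERF 4)$ is not excluded by weights. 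One could try to show surjectivity by arguing that $H^\pu(E)$ is generated as a ring by restrictions of boundary divisor classes of $\AVOR 4$ (since $E$ is, on the level cover, a smooth projective toric variety), but you would still need to check that the stabilizer-invariants are generated in degree $2$, which is not automatic.

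\textbf{A factual slip.} The singular point of $\APERF 4$ corresponds to the $10$-dimensional second perfect cone $\Pi_2(4)$, not to $\sigma_{II}^{(4)}$ (the latter is a $4$-dimensional cone in the genus-$3$ fan appearing in Section~\ref{s:beta_3}). Correspondingly, the class you denote $\ppe$ is not something defined in the paper.
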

\begin{cor}\label{ab4}
The rational cohomology of $\ab4$ vanishes in degree $1,11$ and all degrees $\geq13$. 
Furthermore one has $\coh{\ab 4}{0}=\Q(-10)$, $\coh{\ab 4}{2}=\Q(-9)$
and $\coh{\ab 4}{12}=\Q(-6)+\Q(-9)$. For the remaining Betti numbers a lower bound is given in 
the table 
$$
\begin{array}{c|cccccccccccccc}
i  &0&1&2&3&4&5&6&7&8&9&10&11&12&\geq 13\\\hline
b_i&1&0&1&\geq0&\geq1&\geq0&\geq2&\geq0&\geq1&\geq0&\geq1&0&2&0\\
\end{array}
$$
\end{cor}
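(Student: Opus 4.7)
The plan is to extract the cohomology of $\ab 4$ from the cohomology of the compactification $\AVOR 4$ computed in Theorem~\ref{mainthm}, combined with the cohomological description of the boundary strata $\beta_i^0$, $i\geq 1$, assembled into Table~\ref{Biggysin}. The main tool is the long exact sequence in compactly supported cohomology associated to the open-closed decomposition $\AVOR 4=\ab 4\sqcup\partial\AVOR 4$,
\begin{equation*}
\cdots \to \cohc{\partial\AVOR 4}{i-1} \to \cohc{\ab 4}{i} \to \cohc{\AVOR 4}{i} \to \cohc{\partial\AVOR 4}{i} \to \cdots .
\end{equation*}
Since $\AVOR 4$ is globally the quotient of a smooth projective scheme by a finite group, $\cohc{\AVOR 4}{i}=\coh{\AVOR 4}{i}$ is pure of weight $i$ and is fully determined (except in degree $10$) by Theorem~\ref{mainthm}; the cohomology of $\partial\AVOR 4$ is the abutment of the Gysin spectral sequence for the stratification by the $\beta_i^0$, whose $E_1$-page is precisely what Table~\ref{Biggysin} records.

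With these inputs in hand, I would argue degree by degree. To handle small degrees I would use the observation recalled in the introduction that $\ab 4\setminus\overline{\mathcal J_4}$ is a smooth affine variety of complex dimension~$10$, whose compactly supported cohomology therefore vanishes in degree $<10$; this yields $\cohc{\ab 4}{i}\cong\cohc{\overline{\mathcal J_4}}{i}$ for $i\leq 8$ (with controlled behaviour at $i=9$), and so reduces the question, in low degrees, to the cohomology of $\overline{\mathcal J_4}$, which is accessible through the Torelli map $\tau_4\co \M 4\to\ab 4$. For high degrees I would invoke Poincar\'e duality on the smooth Deligne--Mumford stack $\ab 4$ of dimension $10$,
\begin{equation*}
\coh{\ab 4}{i}\;\cong\;\cohc{\ab 4}{20-i}^{\vee}(-10),
\end{equation*}
which converts low-degree compactly supported vanishings into the vanishings $\coh{\ab 4}{i}=0$ for $i\in\{1,11\}$ and $i\geq 13$ claimed in the corollary, and likewise identifies the pure Tate summands through the corresponding pieces of $\cohc{\ab 4}{20-i}$.

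The lower bounds in the table and the identification of the summands $\Q(-6)$ and $\Q(-9)$ in degree $12$ are then read off directly from the Gysin spectral sequence, combining what the boundary strata contribute with the now-known $\coh{\AVOR 4}{\ast}$: the summand $\Q(-6)$ corresponds to an algebraic Hodge class polynomial in $\lambda_1$ and $\lambda_3$, while the non-algebraic summand $\Q(-9)$ of weight $18$ is produced, via pull-back to the boundary stratum $\beta_1^0$ (essentially a torus bundle over $\cX_3$), from the non-algebraic class in $\coh{\ab 3}{6}$ of \cite[Thm 1]{Hain}. The main obstacle is the middle degree: since $b_{10}(\AVOR 4)$ is known only up to $e(\ab 4)$, the long exact sequence gives only a lower bound at $\coh{\ab 4}{10}$, and moreover a careful analysis of the weight filtration on the $E_1$-page of the Gysin spectral sequence, keeping track of how the pure pieces on the individual strata glue together, is required to separate, in the intermediate degrees, the classes coming from $\ab 4$ from those that are forced to come from the boundary.
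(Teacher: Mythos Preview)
Your second paragraph is exactly the paper's proof: affineness of $\ab 4\setminus\overline{\mathcal J}_4$ gives $\cohc{\ab 4}{i}\cong\cohc{\overline{\mathcal J}_4}{i}$ for $i\leq 8$ and $\cohc{\ab 4}{9}=0$, and Poincar\'e duality then converts Lemma~\ref{jacobians} into the vanishings in degrees $11$ and $\geq 13$ together with $\coh{\ab 4}{12}=\Q(-6)+\Q(-9)$ (from $\cohc{\overline{\mathcal J}_4}{8}=\Q(-4)+\Q(-1)$). The paper phrases this as ``follows directly from the last two columns of Table~\ref{Biggysin} by Poincar\'e duality'', since columns $p=5,6$ are precisely $\cohc{\overline{\mathcal J}_4}{\bullet}$ and $\cohc{\ab 4\setminus\overline{\mathcal J}_4}{\bullet}$.

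Two points of divergence. First, your opening detour through the long exact sequence of $(\AVOR 4,\partial\AVOR 4)$ is unnecessary and makes the argument harder: it would force you to compute $\cohc{\partial\AVOR 4}{\bullet}$ from columns $1$--$4$ of Table~\ref{Biggysin}, including determining all the differentials among them, whereas the paper never needs the cohomology of the boundary as such. Second, the lower bounds in the table are \emph{not} extracted from the spectral sequence in the paper; they come simply from the non-vanishing of the monomials in $\lambda_1,\lambda_3$ in the tautological subring of $\coh{\ab 4}{\bullet}$ (cf.\ \cite{vdG-lambda}). Your proposed route cannot produce them either, because in the relevant range the column $p=6$ of Table~\ref{Biggysin} is unknown. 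Your heuristic tracing the $\Q(-9)$ summand to $\beta_1^0$ is consistent with your long-exact-sequence viewpoint, but in the paper's framing this class sits inside $\ab 4$ itself, arising from the stratum $\tau_3(\M 3)\times\ab 1$ of $\overline{\mathcal J}_4$ (the entry $\Q(-1)$ at $(p,q)=(4,4)$ in Table~\ref{Gysin-T5}).
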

Note that the vanishing of $b_1$ also follows independently from simple connectedness of $\ab 4$ 
\cite[Theorem 4.1]{HR} and that
$b_2=1$ corresponds to the fact that the Picard group is generated by the Hodge line bundle (over $\Q$).

\begin{proof}[Proof of Theorem~\ref{mainthm}]
To compute the cohomology of $\AVOR 4$, we study the Gysin spectral sequence 
$E^{p,q}_r\Rightarrow \coh{\AVOR4}{p+q}$ associated with the filtration
$\{T_i\}_{i=1,\dots,6}$ such that\begin{itemize}
\item $T_i=\beta_{5-i}$, $i=1,\dots,4$;
\item $T_5= \overline {\mathcal J_4} \cup T_4$;
\item $T_6=\AVOR 4$.
\end{itemize}

The $E_1$ term of this spectral sequence has the form $E^{p,q}_1 = \cohc{T_p\setminus T_{p-1}}{p+q}$. 
For $p=1,\dots,4$ the strata $T_p\setminus T_{p-1}$ coincide with the strata of $\AVOR 4$ of semi-abelic varieties of torus rank $5-p$; their cohomology with compact support is computed in the next sections by combining combinatorial information on the toroidal compactification with the geometry of fibrations on moduli spaces of abelian varieties (see Propositions~\ref{cohom_beta1}, \ref{cohom_beta2}, \ref{cohom_beta3} and Theorem \ref{Voronoi-beta4}).
The stratum $T_5\setminus T_4$ is the closure inside $\ab4$ of the locus of jacobians. Its cohomology with compact support is computed in Lemma~\ref{jacobians}.

The only remaining stratum is the open  stratum $T_6\setminus T_5$. Let  ${\mathcal J}^{\Sat}_4$ be the closure of  $\mathcal J_4$
in $\ASAT 4$. Since this contains the entire boundary of $\ASAT 4$ it follows that 
$$
T_6 \setminus T_5=\ab 4 \setminus \overline {\mathcal J}_4= \ASAT 4 \setminus {\mathcal J}^{\Sat}_4.
$$
The latter set is affine since it is the complement of an ample hypersurface on $\ASAT 4$ 
 (see \cite{HaHu}).
In particular, its cohomology with compact support can be non-trivial only if the degree lies between $10$ and $20$.

From this it follows that the $E_1$ term of the Gysin spectral sequence associated with the filtration $\{T_i\}$ is as given in Table~\ref{Biggysin}. For the sake of simplicity, in that table we have denoted $\cohc[\V_{2,2}]{\ab2}3$ and $\cohc[\V_{2,2}]{\ab2}4$ with the same symbol $H$, even though a priori they are only isomorphic after passing to the associated graded piece with respect to the weight filtration. (Furthermore, the results in in \cite{OTM24} imply $H=0$.)

\begin{table}
\caption{\label{Biggysin}$E_1$ term of the Gysin spectral sequence associated with the filtration $T_i$.}
\rotatebox{90}
{
\begin{tabular}{l}$
\begin{array}{r|ccccccc}
q&\\[6pt]
17&\Q(-9) & 0     &   0   & 0    &  0   &  0  &\\
16&  0    & 0     &   0   & 0    &  0   &  0  &\\
15&\Q(-8)& 0     &   0   &    0 &  0   &  0  &\\
14&  0    & 0     &   0   &    \Q(-9) &  0   &\Q(-10)&\\
13&\Q(-7)^{\oplus2}& 0     &\Q(-8) &   0  &\Q(-9)    &  ?  &\\
12&  0    &\Q(-7) &   0   &   \Q(-8)^{\oplus2}&0&  ?  &\\
11&\Q(-6)^{\oplus4}&  0    &\Q(-7)^{\oplus3}& 0 &\Q(-8)    &  ?  &\\
10&  0    &\Q(-6)^{\oplus3}&   0   &   \Q(-7)^{\oplus3}  &0&  ?  &\\
 9&\Q(-5)^{\oplus6}&  0    &\Q(-6)^{\oplus5}&\Q(-6)^{\oplus\epsilon} &  \Q(-7)^{\oplus2}   &  ?  &\\
 8&  0    &\Q(-5)^{\oplus4}&\Q(-3) &   
{
\Q(-6)^{\oplus(4+\epsilon)}+\Q(-3)
}   &0& ?  &\\
 7&\Q(-4)^{\oplus7}&0      &\Q(-5)^{\oplus5}+H(-1)
                          &\Q(-5)^{\oplus\epsilon}& \Q(-6)   &  ?  &\\
 6&  0    &\Q(-4)^{\oplus4}& \Q(-2)+H(-1)
                          &   {\Q(-5)^{\oplus(3+\epsilon)}+\Q(-2)} & 0 &  ?  &\\
 5&{\Q(-3)^{\oplus6}
   +\Q(-1)}& \Q(-1)&{\Q(-4)^{\oplus3}
                   +\Q(-2)+H
                          }&  \Q(-2)    &  \Q(-5)   &   ? &\\
 4&  0    &\Q(-3)^{\oplus2}& \Q(-1)+H& { \Q(-4)^{\oplus2}+\Q(-1)}  &0 &  ? &\\
 3&\Q(-2)^{\oplus3}&  \Q   &\Q(-3)^{\oplus2}&  \Q(-1)   &  {\Q(-4)+\Q(-1)}   &  0  &\\
 2&  0    & \Q(-2)&   0   &    {\Q(-3)+\Q}  & 0 &  0  &\\
 1&\Q(-1)^{\oplus2}&   0   &\Q(-2) &   0  &   0  &  0  &\\
 0&  0    &\Q(-1) &   0   &   0  &   0  &  0  &\\
-1& \Q    &  0    &   0   &   0  &   0  &  0  &\\
\hline
  &  1  &  2  &  3  &  4  &  5  &  6  &p\\
\end{array}
$\\
$H=\cohc[\V_{2,2}]{\ab2}3\cong\cohc[\V_{2,2}]{\ab2}4$ (up to grading), $\epsilon:=\rank\cohc[\V_{1,1,0}]{\ab3}9$.
\end{tabular}
}
\end{table}
Since the terms in the sixth column are only known for $q\leq3$, in the following we will only deal with the terms of the spectral sequence that are independent of them, that is, the $E^{p,q}_r$ terms with $p+q\leq 8$. 

Let us recall that $\AVOR 4$ is a smooth Deligne--Mumford stack which is globally the quotient of a smooth proper variety by a finite group. From this it follows that the cohomology groups of $\AVOR 4$ carry pure Hodge structures of weight equal to the degree. Therefore, the Hodge structures on $E^{p,q}_\infty$ have to be pure of weight $p+q$. This means that for all $p,q$, the graded pieces of $E^{p,q}_1$ of weight different from $p+q$ are killed by differentials. In particular, if we restrict to the range $p+q\leq 9$, this gives that the $E_\infty$ terms are as given in Table~\ref{einfty}. Of course, this does not describe precisely at which $E_r$ the spectral sequence degenerates, or what exactly is the rank of the differentials. For instance, if one assumes $H=0$ (which in view of the results in \cite{OTM24} is indeed the case), a natural thing to expect is that the $d_1$-differentials $E^{1,5}_1\rightarrow E^{2,5}_1$, $E^{3,5}_1\rightarrow E^{4,5}_1$, $E^{3,4}_1\rightarrow E^{4,4}_1$ and $E^{4,3}_1\rightarrow E^{5,3}_1$, as well as the $d_2$-differential $E^{2,3}_2\rightarrow E^{4,2}_2$ have rank $1$, but this is not the only possibility. 
The claim on the cohomology of $\AVOR 4 $ in degree $\leq 9$ follows from the $E_\infty$ term in Table~\ref{einfty}. The claim on the cohomology in degree $\geq 11$ follows by Poincar\'e duality.
Finally, the computation relating the middle Betti number $b_{10}$ to the Euler number $e(\ab4)$ follows from the additivity of the Betti numbers of the stratification $\{T_\pu\}$.
\begin{table}
\caption{$E^{p,q}_\infty$ in the range $p+q\leq 9$. \label{einfty}}
$
\begin{array}{r|ccccccc}
q&\\[6pt]
 8&  0    \\
 7&\Q(-4)^{\oplus7}&0      \\
 6&  0    &\Q(-4)^{\oplus4}& 0\\
 5&\Q(-3)^{\oplus6}&   0   &\Q(-4)^{\oplus3}& 0 \\
 4&  0    &\Q(-3)^{\oplus2}&  0    &\Q(-4)^{\oplus2}&0 &   &\\
 3&\Q(-2)^{\oplus3}&   0   &\Q(-3)^{\oplus2}& 0     &\Q(-4) &  0  &\\
 2&  0    & \Q(-2)&   0   & \Q(-3)& 0 &  0  &\\
 1&\Q(-1)^{\oplus2}&   0   &\Q(-2) &   0  &   0  &  0  &\\
 0&  0    &\Q(-1) &   0   &   0  &   0  &  0  &\\
-1& \Q    &  0    &   0   &   0  &   0  &  0  &\\
\hline
  &  1  &  2  &  3  &  4  &  5  &  6  &p\\
\end{array}$
\end{table}
\end{proof}
\begin{rem}
The fact that the mixed Hodge structures on the $E_1^{p,q}$ in Table~\ref{Biggysin} are compatible with obtaining $E_\infty^{p,q}$ terms that carry pure Hodge stuctures of the correct weight provide an important check on the correctness of our computations. 
\end{rem}

\begin{proof}[Proof of Theorem~\ref{mainthm_igu}]
The proof is analogous to that of Theorem~\ref{mainthm}. Rather than working with the filtration $\{T_i\}$, we will consider the stratification $\{T_i^\Perf\}$ 
defined analogously by $T_i^\Perf=\beta_{5-i}^\Perf$ for $1\leq i\leq 4$ and $T_5^\Perf=\overline{\mathcal J}_4\cup T_4^\Perf$, $T_6^\Perf=\APERF 4$. 
The closed stratum $T_1$ is the the locus $\beta_4^\Perf$ of torus rank $4$ inside $\APERF 4$. Hence $E^{1,q}_1 = \coh{\beta_4^\Perf}{q+1}$ can be obtained from Theorem~\ref{Igusa-beta4}. 

Since the exceptional divisor of the blow-up map $q\co\AVOR 4\rightarrow\APERF 4$ is contained in $T_1$, we have $(\APERF 4\setminus q(T_1))\cong (\AVOR 4\setminus T_1)$. In particular, the Gysin spectral sequence associated with the stratification of $\APERF 4$ has $E^{p,q}_1$ terms that coincide with those of Table~\ref{Biggysin} for $p\geq 2$. Moreover, also the rank of all differentials $E^{p,q}_r\rightarrow E^{p+r,q-r+1}_r$ coincide with those for the filtration $\{T_i\}$ as long as no $E^{1,q}_r$-terms are involved. This already implies the claim for all degrees different from $6$. In degree $6$, it is necessary to decide whether the class of Hodge weight $2$ in $E^{5,1}_1$ is killed by differentials of the spectral sequence or not. If we consider the map $\AVOR 4\supset \beta_4\rightarrow\beta_4^\Perf\subset \APERF 4$, we have that the weight $2$ class on $\beta_4^\Perf$ lies in the image of the weight $2$ class in the cohomology of $\beta_4$, which was killed by differentials for purity reasons on $\AVOR 4$. This implies that this must be the case also on $\APERF 4$. From this the claim follows.
\end{proof}

\begin{proof}[Proof of Corollary~\ref{ab4}]
The lower bounds in the claim are those given by the dimension of the subring generated by $\lambda_1$ and $\lambda_3$. 
The vanishing of the cohomology of $\ab4$ in all degrees $i\geq 13$ and in degree $11$, as well as $\coh{\ab4}{12}=\Q(-6)+\Q(-9)$ follow directly from the last two columns of Table~\ref{Biggysin} by Poincar\'e duality.  
\end{proof}

\begin{rem}
Comparing Table~\ref{Biggysin} with the cohomology of $\AVOR 4$ of degree $\geq 11$ suggests that the cohomology of the open stratum $\ab4\setminus\overline{\mathcal J}_4$ could vanish in all positive degrees, with the exception of degree $10$ on which Poincar\'e duality yields no information.

The reason is the following.
One can reinterpret the first 4 columns of Table~\ref{Biggysin} as the $E_1$ terms of a Gysin spectral sequence converging to the cohomology of the boundary $\partial\AVOR4=\AVOR4\setminus\ab4$. Then the remaining information from that Table is equivalent to the study of the long exact sequence 
\begin{equation}\label{inclbdry}
\cohc{\ab4}k\rightarrow\coh{\AVOR 4}k\rightarrow\coh{\partial\AVOR4}k\rightarrow\cohc{\ab4}{k+1}
\end{equation}
associated with the closed inclusion of the boundary in $\AVOR 4$. 
One can compare the information on the cohomology of the boundary coming from Table~\ref{Biggysin} with the Betti numbers of $\AVOR4$ in degree $\geq 11$ from Theorem~\ref{mainthm}. Then one sees that even in this range the results are compatible with the vanishing of the cohomology of $\ab4\setminus\overline{\mathcal J}_4$, or, equivalently, with the hypothesis that the cohomology of $\ab4$ is the minimal possible, i.e. generated by $\lambda_1$, $\lambda_3$ and $\alpha\in H^{9,9}(\coh{\ab4}{12})$ from Corollary~\ref{ab4}.
This would imply that the cohomology of $\ab4$ coincides with the stable cohomology in degree $\leq 10$, while a priori this is known only in degree $\leq g-2=2$. 

Furthermore, \eqref{inclbdry} gives strong restrictions on the possible existence of cohomology classes on $\ab4$ that are not in the subring generated by the Hodge classes. This follows from the purity of the cohomology of $\AVOR 4$ combined with Table~\ref{Biggysin}, which ensures that the cohomology of $\partial\AVOR 4$ is very close to be pure itself. Practically, this forces non-trivial cohomology classes from $\ab4\setminus\overline{\mathcal J}_4$ to appear in pairs of isomorphic Hodge structures, occurring as  graded pieces of $\coh {\ab4\setminus\overline{\mathcal J}_4}k$ and $\coh {\ab4\setminus\overline{\mathcal J}_4}{k+1}$.

One could also use
Table~\ref{Biggysin} to prove the vanishing of $\coh{\ab4\setminus\overline{\mathcal J}_4}k$. Then one needs to prove that all (algebraic) classes of weight $10-k$ that occur in $E_1^{p,q}$ with $p+q=10-k$ give rise to cohomology classes in $\cohc{\AVOR 4}{10-k}$ that are linearly independent. This is known for divisors (ensuring the vanishing of $\coh{\ab4\setminus\overline{\mathcal J}_4}k$ for $k=1,2$). It would be interesting to investigate it for classes of higher codimension.

Note that, if one knew that $\coh{\ab4\setminus\overline{\mathcal J}_4}k$ vanishes for all $1\leq k\leq 9$, then this would yield the following result for the Betti numbers of $\APERF 4$ in degree $\geq 11$:
$$
\begin{array}{c|ccccc}
i  &12&14&16&18&20\\\hline
b_i&14&9&4&2&1
\end{array}
$$
as well as the vanishing of all odd Betti numbers of $\APERF 4$.
\end{rem}

\section{Torus rank $0$}

We start by considering $T_5\setminus T_4$, which is the Zariski closure $\overline {\mathcal J}_4$ of the locus of jacobians $\mathcal J_4 = \tau_4(\M4)$ inside $\ab4$.

\begin{lem}\label{jacobians}
The only non-zero Betti numbers with compact support of $\overline {\mathcal J}_4$ are as follows:
$$\begin{array}{c|cccccc}
i&18&16&14&12&10&8\\\hline
b_i&1&1&2&1&1&2
\end{array}
$$
In particular, all odd Betti numbers vanish. 

Furthermore, all cohomology groups with compact support are generated by algebraic classes, with the only exception of $\cohc{\overline{\mathcal J}_4}8$, which is an extension of $\Q(-4)$ by $\Q(-1)$.
\end{lem}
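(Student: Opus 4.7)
The plan is to use the Torelli map to stratify $\overline{\mathcal J}_4$ by the topological type of the compact-type stable curve whose Jacobian gives the ppav, and to assemble the compactly supported cohomology from the known rational cohomology of $\M g$ for $g\leq 4$ by means of the Gysin spectral sequence of this stratification.

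First, I would set up the stratification. The Torelli map is injective on coarse moduli spaces for $g\geq 2$, the Jacobian of a compact-type stable curve is the product of the Jacobians of its components independently of the gluing points, and a product principally polarized abelian variety determines its factors uniquely up to reordering. Combining these observations, $\overline{\mathcal J}_4$ decomposes in $\ab 4$ as the disjoint union of five locally closed strata, indexed by unordered partitions of $4$:
\begin{gather*}
\M 4,\quad \M 3\times\ab 1,\quad \Sym^2\M 2,\quad \M 2\times\Sym^2\ab 1,\quad \Sym^4\ab 1,
\end{gather*}
of dimensions $9,7,6,5,4$ respectively.

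Second, I would compute the compactly supported cohomology of each stratum. The cases $\M 2$ and $\M 3$ are handled by the classical results of Mumford and Looijenga combined with Poincar\'e duality on the corresponding smooth Deligne--Mumford stack. The coarse moduli space of $\ab 1$ equals $\A^1$, so $\cohc{\ab 1}{k}=\Q(-1)$ for $k=2$ and vanishes otherwise; the cohomology of $\Sym^n\ab 1$ is then obtained as the $\s_n$-invariant part of the K\"unneth decomposition of $\cohc{(\ab 1)^n}{\pu}$, with the appropriate Tate twists. For the largest stratum $\M 4$, I would invoke Tommasi's computation of the rational cohomology of $\M 4$ together with Poincar\'e duality on a smooth Deligne--Mumford stack of dimension~$9$. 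The K\"unneth formula then yields a closed expression for the compactly supported cohomology of each product stratum, each of which is a direct sum of Tate Hodge structures.

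Third, I would glue the strata through the Gysin spectral sequence associated to the filtration of $\overline{\mathcal J}_4$ by the closed unions of strata, starting from $\Sym^4\ab 1$ at the bottom. Since every stratum has pure compactly supported cohomology in the relevant range, weight considerations determine most ranks of differentials, and the resulting $E_\infty$-page should match the stated Betti numbers and in particular force the vanishing of all odd-degree groups.

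The main obstacle is twofold. On the one hand, I expect a delicate bookkeeping for the $\M 4$ stratum, both in the cohomology computation itself and in tracking which classes survive to $E_\infty$. On the other hand, and more substantively, a weight argument alone cannot detect the non-trivial extension $0\to\Q(-1)\to\cohc{\overline{\mathcal J}_4}{8}\to\Q(-4)\to 0$ asserted in the statement, since both constituents are of Tate type; establishing that the Gysin glueing in degree $8$ does not split will require a concrete geometric analysis of the way $\Sym^4\ab 1$ sits inside the closure of $\M 2\times\Sym^2\ab 1$.
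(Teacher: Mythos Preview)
Your stratification and general strategy coincide with the paper's approach, and your computation of the $E_1$-page via Mumford, Looijenga, and Tommasi is correct. However, you have misidentified the actual obstruction.

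The degree-$8$ extension is not the issue. In the paper's notation, ``$A+B$'' denotes \emph{any} extension of $A$ by $B$, not necessarily a non-split one; the statement $\cohc{\overline{\mathcal J}_4}8=\Q(-4)+\Q(-1)$ simply records the two graded pieces $E_\infty^{1,7}=\Q(-4)$ (from $\Sym^4\ab1$) and $E_\infty^{4,4}=\Q(-1)$ (from the weight-$0$ class in $\cohc{\M3}6$ tensored with $\cohc{\ab1}2$). Both survive automatically, since no differential touches them.

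The genuine gap is in degrees $12$ and $13$. The $E_1$-page carries three classes of type $\Q(-6)$: one each at $(p,q)=(3,9)$, $(4,8)$, $(5,8)$, coming respectively from $\cohc{\Sym^2\M2}{12}$, $\cohc{\M3\times\ab1}{12}$, and $\cohc{\M4}{13}$. Weight considerations are useless here, since all three have the same weight. To obtain $b_{12}=1$ and $b_{13}=0$ you must prove that the composite differential
\[
\cohc{\overline{\mathcal J}_4^{\op{red}}}{12}\cong\Q(-6)^{\oplus2}\longrightarrow\cohc{\mathcal J_4}{13}\cong\Q(-6)
\]
is surjective. The paper does this by a Chow-theoretic argument: the two algebraic cycles $C_1=[\overline{\Sym^2\tau_2(\M2)}]$ and $C_2=[\overline{\tau_3(\mathcal H_3)}\times\ab1]$ generating the source satisfy a non-trivial relation in $\overline{\mathcal J}_4$, obtained by pushing forward Yang's codimension-$3$ relation on $\Mb4$ via the compact-type Torelli map $\tau^{\op{ct}}\co\M4^{\op{ct}}\to\overline{\mathcal J}_4$. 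Without this step your spectral sequence cannot distinguish between $(b_{12},b_{13})=(1,0)$ and $(b_{12},b_{13})=(2,1)$.
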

\proof
We compute the cohomology with compact support of $\overline{\mathcal J}_4$ by recalling that the Zariski closure of 
the locus of jacobians in $\ab4$ is the union of the image of the Torelli map and the locus of abelian fourfolds that 
are products of abelian varieties of dimension $\leq 3$. 
This allows to cover $\overline{\mathcal J_4}$ by the following locally closed disjoint strata:
$$
\begin{array}{l}S_1=\Sym^4\ab1,\ S_2=\tau_2(\M2)\times\Sym^2\ab1,\ S_3=\Sym^2\tau(\M2),\\
 S_4=\tau_3(\M3)\times\ab1,\ S_5=\tau_4(\M4).
\end{array}$$

Furthermore, the Torelli map in all genera induces an isomorphism in cohomology with rational coefficients 
between $\M g $ and its image $\tau_g(\M g)$. This allows to compute the cohomology with compact support of all 
strata from previously known results on the cohomology of $\M g$ with 
$g\leq 4$ (\cite{Mu-curves},\cite{Looij},\cite{OTM4}). 
These yield that the $E_1$ term $E^{p,q}_1 = \cohc{S_p}{p+q}$ of the Gysin exact sequence of the filtration 
associated with the stratification $S_j$  is as in Table~\ref{Gysin-T5}.

\begin{table}
\caption{\label{Gysin-T5}$E_1$ term of the Gysin spectral sequence converging to the cohomology with compact support of the closure of the locus of jacobians in $\ab4$}
$$\begin{array}{r|cccccc}
q\\[6pt]
13&0&0&0&0&\Q(-9)&\\
12&0&0&0&0&0&\\
11&0&0&0&0&\Q(-8)&\\
10&0&0&0&\Q(-7)&0&\\
 9&0&0&\Q(-6)&0&\Q(-7)&\\
 8&0&\Q(-5)&0&\Q(-6)&\Q(-6)&\\
 7&\Q(-4)&0&0&0&0&\\
 6&0&0&0&0&0&\\
 5&0&0&0&0&0&\\
 4&0&0&0&\Q(-1)&0&\\
\hline
&1&2&3&4&5&p
\end{array}
$$
\end{table}

In view of Table~\ref{Gysin-T5}, to calculate the cohomology with compact support of
$\overline{\mathcal J_4}$ it is sufficient to know the rank of the differential
$$d:\;\cohc{\overline{\mathcal
J}_4^\op{red}}{12}\cong\Q(-6)^{\oplus2}\longrightarrow\cohc{\mathcal J_4}{13}\cong\Q(-6)$$
in the Gysin long exact sequence associated with the closed inclusion of the locus
$\overline{\mathcal J}_4^\op{red}=
\overline{\mathcal J}_4 \setminus {\mathcal J}_4 = \overline S_3\cup \overline S_4\subset\ab4$
of reducible abelian fourfolds in the Zariski closure in $\ab4$ of the locus of jacobians
$\mathcal J_4 = \tau_4(\M 4)$.

We observe that $\cohc{\overline{\mathcal J}_4^\op{red}}{12}$ is generated by two
$6$-dimensional algebraic cycles $C_1$ and $C_2$, where $C_1$ is the fundamental class of
$\overline{S_3}$ and  $C_2$ the fundamental class of 
$\overline{\tau(\mathcal H_3)}\times\ab1$, where $\mathcal H_3$ is the hyperelliptic locus. 
Therefore, the surjectivity of $d$ is equivalent to the existence of a
relation between $C_1$ and $C_2$ viewed as elements of the Chow group of
$\overline{\mathcal J}_4$.

Let us denote by $\M4^\op{ct}$ the moduli space of stable genus $4$ curves of compact type, i.e. such that that their generalized Jacobian is compact.  
Then the Torelli map extends to a proper morphism
$$\tau^\op{ct}\co\M4^\op{ct}\longrightarrow\overline{\mathcal J}_4.$$
From the geometric description of the map $\tau^\op{ct}$ it follows that the image under
$\tau^\op{ct}$ of the Chow group of dimension $6$ cycles supported on the boundary
$\M4^\op{ct}\setminus\M4$ coincides with $\langle C_1,C_2\rangle$. 
Indeed, let $D_1$ be the closure of the locus of stable curves consisting of two genus $2$ curves
intersecting in a Weierstrass point and let $D_2$ be the closure of the locus of stable curves consisting
of elliptic curves intersecting a hyperelliptic genus $3$ curve in a Weierstrass point. Then $D_1$ and $D_2$ map 
to $C_1$ and $C_2$ respectively. It is known that the dimension $6$ classes in $\M4^\op{ct}$
fulfill a relation, given by the restriction of the relation on $\Mb 4$ of \cite[Prop.~2]{yang}. 
When pushed forward via $\tau^\op{ct}$, this relation gives a non-trivial relation between $C_1$ and $C_2$.
Thus the differential $d$ has to be surjective and the claim follows. 
\qed

\section{Torus rank $1$}\label{s:beta_1}

Next, we deal with the locus $\beta^1_0$ of semi-abelic varieties of torus rank $1$.
\begin{prop}\label{cohom_beta1}
The rational cohomology with compact support of $\beta_1^0$ is as follows: the non-zero Betti numbers are
$$
\begin{array}{c|ccccccccccc}
i&6&7&8&9&10&11&12&13&14&16&18\\\hline
b_i&2&1&3&1&4+\epsilon&\epsilon&5+\epsilon&\epsilon&3&2&1
\end{array}
$$
where $\epsilon=\rank \cohc[\V_{1,1,0}]{\ab3}9$.
The cohomology groups of even degree $2k$ are algebraic for $k\geq 7$; for $k\leq 6$ they are extensions of pure Hodge structures of the form $\cohc{\beta_2^0}{2k}=\Q(-k)^{\oplus(b_{2k}-1)}+\Q(k-3)$. The Hodge structures in odd degree are given by $\cohc{\beta_2^0}{2k+1}=\Q(2-k)$ for $k=7,9$ and 
$\cohc{\beta_2^0}{2k+1}=\Q(-k)^{\oplus\epsilon}$ for  $k=11,13$.
\end{prop}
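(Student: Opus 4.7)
The plan is to describe $\beta_1^0$ explicitly as a Kummer family over $\ab 3$, and then compute its cohomology with compact support via the associated Leray spectral sequence, feeding in the cohomology of $\ab 3$ with coefficients in the symplectic local systems that arise.

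First I would identify $\beta_1^0$ geometrically. In the second Voronoi compactification of $\ab 4$, the torus-rank one boundary stratum corresponds to the unique $\GL(1,\Z)$-orbit of cones in $\Sym^2_{\geq 0}(\R)$, namely the half-ray $\la x_1^2\ra$, whose stabilizer is $\{\pm 1\}$. Together with the extension data describing rank-one semi-abelic varieties with abelian part of dimension $3$, this yields
$$
\beta_1^0\cong \cX_3/\pm 1,
$$
with the involution acting by inversion on each fibre of the universal family $\pi\co\cX_3\to\ab 3$.

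Next I would study the Leray spectral sequence for the proper Kummer morphism $\kappa\co\cX_3/\pm 1\to\ab 3$,
$$
E_2^{p,q}=\cohc[R^q\kappa_*\Q]{\ab 3}{p}\Rightarrow\cohc{\beta_1^0}{p+q}.
$$
The direct images $R^q\kappa_*\Q$ are the $\pm1$-invariants of $\wedge^q\V_{1,0,0}$; they vanish in odd degree, and in even degree decompose via the Lefschetz formula as
$$
\wedge^0\V_{1,0,0}=\Q,\quad \wedge^2\V_{1,0,0}=\V_{1,1,0}\oplus\Q(-1),
$$
$$
\wedge^4\V_{1,0,0}=\V_{1,1,0}(-1)\oplus\Q(-2),\quad \wedge^6\V_{1,0,0}=\Q(-3).
$$
Each row of $E_2$ can then be read off from the appendix, which provides $\cohc{\ab 3}{*}$ (via Hain's theorem and Poincar\'e duality) and $\cohc[\V_{1,1,0}]{\ab 3}{*}$ explicitly, with the sole exception that $\epsilon:=\rank\cohc[\V_{1,1,0}]{\ab 3}{9}$ is not known a priori. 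This accounts for the dependence of the statement on~$\epsilon$.

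Finally, I would show that the spectral sequence degenerates at $E_2$ and determine the remaining extensions. For most $(p,q)$, the weights of the VHS coefficients, combined with the compact-support weight bounds on $\ab 3$, prevent any non-trivial differentials and control the mixed Hodge structure on the abutment; in particular all odd-degree contributions and all high even degrees come out pure, matching the statement. The main obstacle will be the determination of the non-split extensions appearing on $\cohc{\beta_1^0}{2k}$ in the low even-degree range $2k\leq 12$: these extensions arise from the non-algebraic class in $\coh{\ab 3}{6}$ of \cite{Hain} together with its Lefschetz translates by the polarization $\omega$, occurring in lower rows of $E_2$. Establishing that the extensions of $\Q(-k)^{\oplus(b_{2k}-1)}$ by the corresponding lower-weight Tate piece are indeed non-split would require a direct geometric argument, presumably by tracing the image of the Hain extension under the Gysin morphism for the closed inclusion $\beta_1^0\hookrightarrow\AVOR 4$, or by an explicit cycle-theoretic description on $\cX_3/\pm 1$. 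Once these extensions are controlled, the Betti numbers and Hodge structures asserted in the statement follow by inspection of $E_\infty$.
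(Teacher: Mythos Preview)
Your approach is essentially the same as the paper's: identify $\beta_1^0$ with the universal Kummer variety $\cX_3/\pm1$ over $\ab 3$, run the Leray spectral sequence for the Kummer map, decompose the even direct images as $\Q,\ \V_{1,1,0}\oplus\Q(-1),\ \V_{1,1,0}(-1)\oplus\Q(-2),\ \Q(-3)$, feed in Hain's theorem and Lemma~\ref{ab3-v110}, and check degeneration at $E_2$ by weights. The paper carries out exactly this and records the $E_2$ term in Table~\ref{E2-beta1}.

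There is one genuine misreading in your proposal. You treat the determination of the extensions in low even degree as the ``main obstacle'' and propose a geometric argument to show non-splitness. But in the paper's notation (see the Notation section), $A+B$ denotes \emph{any} extension of $A$ by $B$, not a non-split one. The statement only asserts what the weight-graded pieces of $\cohc{\beta_1^0}{2k}$ are; this information is read off directly from the $E_\infty$ term once $E_2=E_\infty$ is known, with no further argument required. So the step you flag as an obstacle does not exist.

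Conversely, your justification for degeneration is vaguer than what the paper does. The paper observes that every $d_r$ raises $p+q$ by one, so any nonzero differential must involve a term with $p+q$ odd; the only such terms are $E_2^{5,2}=\Q(-1)$, $E_2^{5,4}=\Q(-2)$ (and, if $\epsilon\neq 0$, the $\epsilon$-terms in column $p=9$), and one checks directly from Table~\ref{E2-beta1} that every possible source or target of a differential touching these has pure Hodge structure of a different weight, forcing $d_r=0$. Replacing your general appeal to ``compact-support weight bounds'' by this concrete check would make your argument complete.
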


\proof
To compute the cohomology with compact support of $\beta_1^0$ we will use the map $k_3\co \beta_1^0\rightarrow \ab 3$ realizing $\beta_1^0$ as the universal Kummer variety over $\ab 3$. The fibre of $\beta_1^0$ over a point parametrizing an abelian surface $S$ is $K:=S/\pm 1$. 

Note that the cohomology of $K$ vanishes in odd degree because of the Kummer involution.
The cohomology of $K$ is one-dimensional in degree $0$ and $6$ and induces trivial local systems on $\ab3$. 
The cohomology group $H^2(K;\Q)\cong \bigwedge^2H^1(S;\Q)$ is $15$-dimensional and induces the local system $\V_{1,1,0}\oplus \Q(-1)$ on $\ab 3$. 
By Poincar\'e duality we have $\coh K4\cong\coh K2\otimes\Q(-1)$, inducing the local system $\V_{1,1,0}(-1)\oplus\Q(-2)$ on $\ab3$.

The cohomology with compact support of $\ab3$ in the local system $\V_{1,1,0}$ is calculated in Lemma~\ref{ab3-v110}.
We refer to Theorem~\ref{hain} for the cohomology with compact support of $\ab3$ with constant coefficients, which was calculated by Hain in \cite{Hain}.
These results allow to compute $E^{p,q}_2=\cohc[R^q_!{k_3}_*(\Q)]{\ab3}{p}$ for the Leray spectral sequence $E_\pu^{p,q}\Rightarrow \cohc{\beta_1^0}{p+q}$ associated with $k_3\co \beta_1^0\rightarrow \ab 3$.  This $E_2$ term is given in Table~\ref{E2-beta1}.
\begin{table}
\caption{\label{E2-beta1}$E_2$ term of the Leray spectral sequence converging to $\cohc{\beta_1^0}\pu$. }
\scalebox{0.96}{
$
\begin{array}{r|ccccccccc}
q&\\[6pt]
6&0&\Q(-6)+\Q(-3)&0&\Q(-7)&0&\Q(-8)&0&\Q(-9)\\
5&0&0&0&0&0&0&0&0&\\
4&\Q(-2)&\Q(-5)+\Q(-2)&0&\Q(-6)^{\oplus (1+\epsilon)}&\Q(-6)^{\oplus\epsilon}&\Q(-7)&0&\Q(-8)\\
3&0&0&0&0&0&0&0&0&\\
2&\Q(-1)&\Q(-4)+\Q(-1)&0&\Q(-5)^{\oplus (1+\epsilon)}&\Q(-5)^{\oplus\epsilon}&\Q(-6)&0&\Q(-7)\\
1&0&0&0&0&0&0&0&0&\\
0&0&\Q(-3)+\Q&0&\Q(-4)&0&\Q(-5)&0&\Q(-6)\\
\hline
&5&6&7&8&9&10&11&12&p
\end{array}
$}
$\epsilon = \rank\cohc[\V_{1,1,0}]{\ab3}9\in\{0,1\}$.
\end{table}
Note that all differentials of this Leray spectral sequence vanish for Hodge-theoretic reasons, so that $E_2=E_\infty$. Specifically, all differentials must involve one $E_2^{p,q}$ term with $p+q$ odd, but there are only two such terms, namely $E_2^{5,2}$ and $E_2^{5,4}$. 
It follows from Table~\ref{E2-beta1} that for both these terms, any differential $d_k$ with $k\geq 2$ which involves one of them will map either to $0$ or to a $E_2$ term that carries a pure Hodge structure of different weight. In both cases the differential has to be $0$.  
\qed

\section{Torus rank $2$}\label{s:beta_2}

In this section we compute the cohomology with compact support of the stratum $\beta^0_2$ of $\AVOR 4$ of rank $2$ degenerations of abelian fourfolds. For this purpose we recall first the known global construction of $\beta^2_0$ as the quotient of a $\Pp1$-bundle of a fibre product of the universal family over $\ab2$.

Furthermore, let us recall that the restriction of the Voronoi fan in genus $g$ to $\Sym^2_{\geq0}(\R^{g'})$ for genus $g\geq g'$ coincides with the Voronoi fan in genus $g'$. This implies that the geometric constructions of the fibrations $\beta^0_2\rightarrow \ab2$ and $\beta^0_3\rightarrow\ab1$ we give in this section and in the following one, respectively, are actually independent of the choice of $g=4$ but extend to analogous descriptions of the fibres of fibrations $\beta^0_2\rightarrow \ab{g-2}$ and $\beta^0_3\rightarrow\ab{g-3}$ that exist for $\beta^0_2,\beta^0_3\subset\AVOR g$ independently of $g$. 
 In particular, the geometric construction of $\beta^0_2$ explained in this section coincides with the construction used in \cite[\S4]{HT} to compute the cohomology with compact support of the corresponding locus in $\AVOR 3$. 

\begin{prop}\label{cohom_beta2}
The rational cohomology with compact support of $\beta_2^0$ is as follows: the non-zero Betti numbers are
$$
\begin{array}{c|ccccccccccc}
i&4&6&7&8&9&10&11&12&14&16\\\hline
b_i&1&2&1+r&4+r&1+r&5+r&1&5&3&1
\end{array}
$$
where $r=\rank \cohc[\V_{2,2}]{\ab2}3$.
If we assume $r=0$, then all cohomology groups of even degree are algebraic, with the exception of $\cohc{\beta_2^0}8=\Q(-4)^{\oplus3}+\Q(-2)$ which is an extension of Hodge structures of Tate type. The Hodge structure in odd degree $2k+1$ with $k=3,4,5$ is pure of Tate type of weight $2k-4$.
\begin{rem}
It follows from \cite{OTM24} that $r=0$.
\end{rem}
\end{prop}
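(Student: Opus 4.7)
The plan is to exploit the fibration structure $\beta_2^0 \to \ab2$ recalled in the preceding discussion, together with the Leray spectral sequence and the cohomology of $\ab2$ with coefficients in symplectic local systems of low weight. Since, as pointed out in the text, the geometric model of $\beta_2^0$ over $\ab2$ is identical to the one used in \cite[\S4]{HT} for the analogous torus rank $2$ stratum of $\AVOR 3$, the computation of the cohomology of the fibres and of the induced local systems on $\ab2$ can be taken over verbatim, with the only difference being that the base is now $\ab2$ rather than $\ab1$.

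Concretely, I would first describe $\beta_2^0$ as the quotient of a $\Pp1$-bundle over the symmetric self-fibre product $\mathcal X_2 \times_{\ab2}\mathcal X_2$ by a finite group coming from the stabilizer of the relevant Voronoi cone in $\Sym^2_{\geq 0}(\R^2)$ (essentially a $\GL(2,\Z)$-stabilizer, whose action restricts to an $\s_3$-action on the generic fibre accounting for the symmetries of the cone together with the Kummer involution). Then I would compute the rational cohomology of the fibre as a representation of this finite group, and invariants give the terms $R^q_!{k}_*(\Q)$ on $\ab2$. The result is a direct sum of local systems of the form $\V_{0,0}$, $\V_{1,1}$, $\V_{2,0}$, $\V_{2,2}$, with appropriate Tate twists: these are precisely the same local systems appearing in the torus rank $2$ stratum for $g=3$, as already observed in the introductory discussion of this section.

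With the $E_2^{p,q} = \cohc[R^q_!{k}_*\Q]{\ab2}{p}$ page in hand, I would plug in the cohomology of $\ab2$ in these local systems collected in the appendix (the trivial case being classical, the $\V_{1,1}$ and $\V_{2,0}$ cases being standard via Petersen/Getzler, and the $\V_{2,2}$ case accounting for the parameter $r = \rank\cohc[\V_{2,2}]{\ab2}3$ that appears in the statement). After assembling this $E_2$ page, the argument to pass to $E_\infty$ is the same purity/Hodge theoretic argument as in the proof of Proposition~\ref{cohom_beta1}: since $\ab2$ has rational cohomological dimension $3$ only a few potential differentials remain, and each of them connects two $E_2$-terms whose weights or parities are incompatible, forcing the differential to vanish. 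From $E_\infty$ one then reads off the Betti numbers and the extension structure in the stated table.

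The main obstacle I anticipate is the bookkeeping of the finite group representation on the fibre cohomology, since the group acts both on the $\Pp1$-factor and on the two copies of $\mathcal X_2$, and one must correctly identify the invariant pieces and their induced $\Sp(4,\Q)$-local systems; a computational error here would propagate to the entire $E_2$ page. A secondary subtlety is that the non-algebraic extension $\Q(-4)^{\oplus 3}+\Q(-2)$ in degree $8$ is not split a priori: to detect the non-splitting one has to track carefully the mixed Hodge structures occurring in $\cohc[\V_{2,0}]{\ab2}{\bullet}$ and $\cohc[\V_{1,1}]{\ab2}{\bullet}$ and verify that the corresponding weight filtration does not split in the Leray spectral sequence.
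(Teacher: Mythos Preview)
Your overall strategy — Leray spectral sequence for the fibration $\beta_2^0\to\ab2$, then plugging in the cohomology of symplectic local systems on $\ab2$ — is exactly what the paper does, and your list of the relevant local systems ($\Q$, $\V_{1,1}$, $\V_{2,0}$, $\V_{2,2}$ with Tate twists) is correct. However, your description of the fibre contains a genuine gap.

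You write that $\beta_2^0$ is the quotient of a $\Pp1$-bundle over $\cX_2\times_{\ab2}\cX_2$ by a single finite group. This is not the case. The natural map $\rho\co\overline U\to\beta_2^0$ has finite fibres, but it is \emph{not} a group quotient: the two sections of $\Delta=\overline U\setminus U$ (the $0$- and $\infty$-sections) are identified under~$\rho$, and the restriction of $\rho$ to the open $\C^*$-bundle $U$ is a quotient by the group $\langle\iota,j_1,j_2\rangle$ of order~$8$, while the restriction to a component of $\Delta$ is a quotient by the stabilizer $\langle i_1,i_2,i_3\rangle$ of the cone $\langle x_1^2,x_2^2,(x_1-x_2)^2\rangle$, a different group of order~$12$. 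So you cannot compute the fibre cohomology as invariants under one group action on a $\Pp1$-bundle. The paper instead stratifies the fibre over $[S]\in\ab2$ into its open part $g^{-1}([S])/\langle j_1,j_2,\iota\rangle$ and closed part $(S\times S)/\langle i_1,i_2,i_3\rangle$, computes the invariant cohomology of each separately (Lemmas~\ref{cohofibreg} and~\ref{fibrediscr}), and then runs the Gysin long exact sequence of the pair. The crucial step you are missing is Lemma~\ref{diff-rank2}: the differential
\[
\delta_2\co\cohc{S\times S}{2}^{(i_1,i_2,i_3)}\cong\Q(-1)\oplus\V_{1,1}\longrightarrow\cohc{g^{-1}([S])}{3}^{(j_1,j_2,\iota)}\cong\V_{1,1}
\]
must be shown to be surjective by an explicit computation with the Chern class of the Poincar\'e bundle. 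Without this, the sheaves $R^q_!(k_2\circ k_3)_*\Q$ would be wrong in low degree and the Betti numbers would not come out as stated.

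A smaller point: the claim that the fibre computation can be taken over ``verbatim'' from \cite[\S4]{HT} is overstated. The toric combinatorics is indeed the same, but in genus~$3$ the fibre sits over an elliptic curve $E$ and one works with $\Sp(2,\Q)$-representations, whereas here the fibre sits over an abelian surface $S$ and one needs the $\Sp(4,\Q)$-decomposition of $\bigwedge^\bullet H^1(S\times S;\Q)$; this is genuinely a new (if parallel) computation, carried out in Tables~\ref{SxS}--\ref{fibreoverS} of the paper.
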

The proof of this Proposition will given in \S\ref{proof-beta2} after some preliminary  steps.

In the previous section, we calculated the cohomology with compact support of $\beta^0_1$ using the map $k_3\co \beta^0_1\rightarrow \ab3$ given by the universal Kummer variety. This map extends to the stratum $\beta_2^0$ of degenerations of abelian fourfolds of torus rank $2$, giving a map $k_3\co (\beta_1\setminus\beta_3)\rightarrow \AVOR3$.
Under this map, the elements of $\AVOR 4$ with torus rank $2$ are mapped to elements of $\AVOR 3$ of torus rank $1$. If we denote by ${\beta'_t}^0$ the stratum of $\AVOR 3$ of semi-abelian varieties of torus rank exactly $t$, we get a commutative diagram
$$\xymatrix@R=10pt@C=18pt{
{\AVOR 4}\ar@{<-^{)}}[d] &{\AVOR 3}\ar@{<-^{)}}[d] &{\AVOR 2}\ar@{<-^{)}}[d] \\
{\beta_2^0} \ar[r]^{k_3} & {{\beta'_1}^0}\ar[r]^{k_2}& {\ab 2}\\
{}&{}&{}\\
{\pi_3^{-1}({\beta'_1}^0)}\ar[uu]^{q_3}\ar[uur]_{\pi_3}&{{\mathcal X}_2}\ar[uu]^{q_2}\ar[uur]_{\pi_2}&{}
}$$

The map $\pi_3$ is the restriction of the universal family over $\AVOR 3$. In particular, the fibres of $\pi_3$ over points of ${\beta'_1}^0$ are rank $1$ degenerations of abelian threefolds, i.e. compactified $\C^*$-bundles over abelian surfaces. A geometric description of these compactified $\C^*$-bundles is given in \cite{Mu}. They are obtained by taking the $\Pp1$-bundle associated to the $\C^*$-bundle and then gluing the $0$- and the $\infty$-section with a shift, defined by a point of the underlying abelian surface that is uniquely determined by the line bundle associated to the $\C^*$-bundle.
In particular, this shift is $0$ for the fibres of the $\pi_3$ over the $0$-section of the Kummer fibration ${\beta'_1}^0\cong(\cX_2/\pm1)\xrightarrow{k_2} \ab2$, which are thus products of a nodal curve and an abelian surface. 

We want to describe the situation in more detail. For this, 
consider the universal Poincar\'e bundle $\mathcal P\rightarrow \cX_2\times_{\ab 2}\hat{\cX_2}$, normalized so that the restriction to the zero section $\hat{\cX_2}\rightarrow \cX_2\times_{\ab 2}\hat{\cX_2}$ is trivial. 
Let 
${\overline U}=\bb P(\mathcal P\oplus \mathcal{O}_{\cX_2\times_{\ab2}\hat{\cX_2}})$ be the associated ${\bb P}^1$-bundle. 
Using the principal polarization we can naturally 
identify $\hat{\cX_2}$ and ${\cX_2}$, which we will do from now on.
We denote by 
$\Delta$ the union of the $0$-section and the $\infty$-section of this bundle. 
Set $U= {\overline U} \setminus \Delta$, which is simply the $\C^*$-bundle 
given by the universal Poincar\'e bundle $\mathcal P$ with the $0$-section removed and denote the 
bundle map by $f\co U\rightarrow \cX_2\times_{\ab2}\cX_2$.
Then there is a map $\rho\co \overline U \to 
\beta_2^0$ with finite fibres. 
Note that the two components of $\Delta$ are identified under the map $\rho$. The restriction of $\rho$ to both $U$ and to
$\Delta$ is given by a finite group action, although the group is not the same in the two cases (see the discussion below). 

\subsection{Geometry of the $\C^*$-bundle}
We now consider the situation over a fixed point $[S] \in \ab2$.
For a fixed degree $0$ line bundle ${\mathcal L}_0$ on $S$ 
the preimage $f^{-1}(S \times \{{\mathcal L}_0 \})$ is a semi-abelian threefold, namely the $\C^*$-bundle given by 
the extension corresponding to  ${\mathcal L}_0 \in \hat{S}$.  This semi-abelian threefold admits a Kummer involution
$\iota$ which acts as $x \mapsto -x$ on the base $S$ and by $t \mapsto 1/t$ on the fibre over the zero section. The Kummer involution $\iota$ is defined universally on $U$.

Consider the two involutions $i_1, i_2$ on $\cX_2\times_{\ab2}\cX_2$ defined by 
$$i_1(S,p,q)=(S,-p,-q) \text{ and } i_2(S,p,q)=(S,q,p)$$ 
for every abelian surface $S$ and every $p,q\in S$. These two involutions lift to involutions $j_1$ and $j_2$ on $U$ that act trivially on the fibre of $f\co U\rightarrow \cX_2\times_{\ab2}\cX_2$ over the zero section. 

The following lemma can also be proved directly from the toroidal construction of $\AVOR 4$ using the approach of \cite[Lemma 2.4]{S-B}.

\begin{lem}
The diagram
\begin{equation}\label{poincare}
\xymatrix{
U \ar[r]_{f} \ar @/^1pc/ [rr]^{g} \ar[d]^{\rho|_U} & {\cX_2\times_{\ab2}\cX_2}\ar[d]^{\rho'}\ar[r] & {\ab 2}\\
{\beta_2^0 \setminus \rho(\Delta)} \ar[r] & {\Sym^2_{\ab2}(\cX_2/\pm 1)}\ar[ru],\\
}
\end{equation}
where $\rho'\co{\cX_2\times_{\ab2}\cX_2}\to{\Sym^2_{\ab2}(\cX_2/\pm 1)}$ is the natural map, is commutative. Moreover 
$\rho|_U\co U\rightarrow \rho(U)=\beta_2^0\setminus\rho(\Delta)$ is the quotient of $U$ by the subgroup of the automorphism group of $U$ generated by $\iota, j_1$ and $j_2$.
\end{lem}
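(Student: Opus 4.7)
The plan is to verify the commutativity of the diagram and then to identify $\rho|_U$ with the claimed finite quotient. Commutativity is a matter of unwinding definitions: a point $u\in U$ over $[S]\in\ab 2$ consists of a pair $(p,q)\in S\times S$ (after identifying $\hat S\cong S$ via the principal polarization) together with a non-zero element $\xi\in\mathcal P_{(p,q)}$. Its image $\rho|_U(u)$ is the rank-$2$ semi-abelic fourfold whose toric part is the $(\C^*)^2$-extension of $S$ classified by $(p,q)\in\hat S\times\hat S$ and whose compactification is encoded by $\xi$. The natural forgetful map $\beta_2^0\setminus\rho(\Delta)\to\Sym^2_{\ab 2}(\cX_2/\pm 1)$ remembers only the unordered pair of Kummer classes $\{\pm p,\pm q\}$, which agrees with $\rho'\circ f(u)$.

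Next I would verify that each of $\iota,j_1,j_2$ factors through $\rho|_U$. The Kummer involution $\iota$ is an automorphism of every semi-abelian (hence semi-abelic) variety, inverting the torus part, so $\rho|_U\circ\iota=\rho|_U$ is automatic. The involution $j_2$ permutes the two $\C^*$-factors of the toric part, giving an isomorphic semi-abelic fourfold. The involution $j_1$ arises from inverting both $\C^*$-characters simultaneously, which again extends to an automorphism of the semi-abelic variety and acts trivially on the fibre of $f$ over the zero section thanks to the chosen normalisation of $\mathcal P$. Hence $\rho|_U$ factors through $U/\langle\iota,j_1,j_2\rangle$.

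The main obstacle is to show that no further identifications occur, i.e.\ that $\langle\iota,j_1,j_2\rangle$ is the \emph{full} group of deck transformations. The cleanest route, as suggested in the statement, is to adapt the toroidal argument of \cite[Lemma 2.4]{S-B}: the stratum $\beta_2^0\setminus\rho(\Delta)$ corresponds to the open rank-$2$ cone $\sigma$ in the Voronoi fan restricted to $\Sym^2_{\geq 0}(\R^2)\subset\Sym^2_{\geq 0}(\R^4)$ (which agrees with the genus-$2$ Voronoi fan), and $\rho|_U$ arises from the action on the corresponding torus bundle over $\ab 2$ of the finite part of the stabilizer of $\sigma$ in $\Sp(8,\Z)$. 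A direct computation identifies this finite part with $\langle\iota,j_1,j_2\rangle$: $\iota$ comes from the centre of $\Sp(8,\Z)$, $j_1$ from $-\op{Id}\in\GL(2,\Z)$ and $j_2$ from the coordinate permutation in $\GL(2,\Z)$. A dimension count (both $U$ and $\beta_2^0\setminus\rho(\Delta)$ are $8$-dimensional) combined with finiteness and surjectivity of $\rho|_U$ confirms that it is the desired geometric quotient.
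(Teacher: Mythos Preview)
Your argument is correct. The factorization step parallels the paper's reasoning; one minor simplification the paper makes is to observe that $j_1$ is the commutator $[\iota,j_2]$, so invariance of $\rho|_U$ under $j_1$ is automatic once invariance under $\iota$ and $j_2$ has been shown, and no separate geometric argument for $j_1$ is needed.

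Where you genuinely diverge is in proving that $\langle\iota,j_1,j_2\rangle$ is the \emph{full} deck group. You appeal to \cite[Lemma~2.4]{S-B} and compute the stabilizer of the relevant rank-$2$ cone in the Voronoi fan, which works but is the heavier route. The paper instead reads the result off directly from the commutative diagram: the map $\rho'$ has degree~$8$, and $\langle\iota,j_1,j_2\rangle$ already has order~$8$, so once $\rho|_U$ is known to factor through this group the degree count forces the induced map $U/\langle\iota,j_1,j_2\rangle\to\beta_2^0\setminus\rho(\Delta)$ to be of degree~$1$. This collapses the whole proof to checking factorization through the three involutions, with no stabilizer computation needed. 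Your approach has the advantage of being self-contained in the toroidal framework and would generalize more directly to higher torus rank; the paper's is shorter and makes essential use of the diagram whose commutativity is part of the statement.
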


\begin{proof}
Since the map $\rho'$ in the diagram~\eqref{poincare} has degree $8$ and $\iota, j_1,j_2$ generate a subgroup of order $8$ of the automorphism group of $U$, it suffices to show that the map $\rho|_U$ factors through each of the involutions $\iota$ and $j_1, j_2$. 
 
Recall that the elements of $\beta_2^0$ correspond to rank $2$ degenerations of abelian fourfolds. 
More precisely, every point of $\rho(U)$ corresponds to a degenerate abelian fourfold $X$ whose 
normalization is a ${\bb P}^1 \times {\bb P}^1$-bundle, namely the 
compactification of a product of two $\C^*$-bundles on the abelian surface $S$ given by $k_1\circ k_2([X])$. 
The degenerate abelian threefold itself is given by identifying 
the $0$-sections and the $\infty$-sections of the ${\bb P}^1 \times {\bb P}^1$-bundle. This identification 
is determined by a complex parameter, namely the point on a fibre of $f\co U\rightarrow \cX_2\times_{\ab2}\cX_2$.

Since a degree $0$ line bundle ${\mathcal L}_0$ and its inverse define isomorphic semi-abelian threefolds and since
the role of the two line bundles is symmetric, the map $\rho|_U$ factors through $\iota$ and $j_2$. 
Since $j_1$ is the commutator of $\iota$ and $j_2$ the map $\rho|_U$ also factors through $j_1$.
\end{proof}

We will compute the cohomology with compact support of $\beta^0_2$ by considering the Leray spectral sequence associated with the fibration $k_2\circ k_3\co \beta^0_2\rightarrow\ab2$. 
This requires to compute the cohomology with compact support of the fibre $(k_2\circ k_3)^{-1}([S])$ over a point $[S]\in\ab2$. To this end, we decompose $(k_2\circ k_3)^{-1}([S])$ into an open part given by its intersection with $\rho(U)$ and a closed part given by its complement.

\subsection{Cohomology of the open part of the fibre} %
The fibration $g\co U\rightarrow\ab2$ obtained by composing the $\C^*$-bundle $f\co U\rightarrow \cX_2\times_{\ab2}\cX_2$ with the natural map $\cX_2\times_{\ab2}\cX_2\rightarrow\ab2$ plays an important role in the study of the restriction of $k_2\circ k_3$ to $\rho(U)$.
Namely, the fibre of $(k_2\circ k_3)|_{\rho(U)}$ over $[S]\in\ab2$ coincides with the quotient of the fibre of $g$ under the automorphism group generated by $j_1,j_2$ and $\iota$. Therefore, the cohomology of the fibre of $k_2\circ k_3$ restricted to $\rho(U)$ is the part of the cohomology of $g^{-1}([S])$ that is invariant under $j_1,j_2$ and $\iota$.

We start by computing the actions of $i_1$, $i_2$ and of the involution $\kappa:\;(p,q)\mapsto(-p,q)$ induced by the Kummer involution of semi-abelian threefolds of torus rank $1$ on the cohomology of $S\times S$. Recall that the cohomology of $S$ is isomorphic to the exterior algebra generated by the $6$-dimensional space $\Lambda:=H^1(S;\Q)$ and that $H^\pu(S\times S;\Q)\cong H^\pu(S;\Q)^{\otimes2}$ by the K\"unneth formula. Using this description, one can calculate the part of the cohomology of $S\times S$ which is invariant under $i_1$ and $i_2$. In particular, since all cohomology in odd degree is alternating under the involution $i_1$, the only non-trivial invariant cohomology groups are in even degree. We give the description of the invariant cohomology groups in the second column of Table~\ref{SxS}. One then proceeds to investigate their structure with respect to $\kappa$. For instance one can use the isomorphism
$H^k(S\times S;\Q)^{(i_1,i_2,\kappa)}\cong H^k(S\times S/(i_1,i_2,\kappa);\Q)$,
together with the fact that the quotient of $S\times S$ by the subgroup generated by $i_1,i_2$ and $\kappa$ is the second symmetric product of $S/\pm1$. In this way one proves that the behaviour of the cohomology with respect to $\kappa$ is as given in the last two columns of Table~\ref{SxS}.

\begin{table}
$$\begin{array}{llll}
k & H^k(S\times S;\Q)^{(i_1,i_2)} & \kappa\text{-invariant} & \kappa\text{-alternating}\\
8 & \Q(-4) & \Q(-4) & 0\\
6 &(\bigwedge^2\Lambda)(-2)^{\oplus 2} & (\bigwedge^2\Lambda)(-2) & (\bigwedge^2\Lambda)(-2) \\
4 &\Q(-2)\oplus \Lambda^{\otimes2}(-1)\oplus \Sym^2(\bigwedge^2\Lambda) & \Q(-2)\oplus \Sym^2(\bigwedge^2\Lambda)  &  \Lambda^{\otimes2}(-1)\\
2 &\bigwedge^2\Lambda^{\oplus 2}&\bigwedge^2\Lambda&\bigwedge^2\Lambda\\
0 &\Q & \Q & 0\\
\end{array}
$$

\caption{\label{SxS} Cohomology of $S\times S/(i_1,i_2)$}
\end{table}

\begin{lem}\label{cohofibreg}
The $(i_1,i_2,\iota)$-invariant part of the Leray spectral sequence associated with the $\C^*$-bundle $g^{-1}([S])\rightarrow S\times S$ gives rise to a spectral sequence $E_\pu^{p,q}\Rightarrow \coh{(k_2\circ k_3|_{\rho(U)})^{-1}([S])}{p+q}$ which behaves as follows:
\begin{itemize}
\item[-]
$E_2^{p,q}$ vanishes for $q\neq0,1$;
\item[-]
$E_2^{p,0}$ is the part of $\coh{S\times S}k$ which is invariant under $i_1$, $i_2$ and $\kappa$; 
\item[-]
$E_2^{p,1}$ is the part of $\coh{S\times S}k$ which is invariant under $i_1$, $i_2$ and alternating under $\kappa$, tensored with the Tate Hodge structure $\Q(-1)$.
\end{itemize}
Furthermore, the $E_\infty$ term of this spectral sequence, together with its structure as $\Sp(4,\Q)$-representation, is as given in Table~\ref{fibreoverS}.  
\end{lem}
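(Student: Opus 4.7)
The plan is to combine the standard Leray spectral sequence for a $\C^*$-bundle with an explicit analysis of how the involutions $j_1, j_2, \iota$ act on its $E_2$ page, and then to analyse the unique potentially non-zero differential $d_2$.

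First I would set up the Leray spectral sequence for the restriction $g^{-1}([S])\to S\times S$, which is a $\C^*$-bundle. Since $\coh{\C^*}0=\Q$ and $\coh{\C^*}1=\Q(-1)$ as a mixed Hodge structure, with all higher cohomology vanishing, the $E_2$ page has only two non-zero rows: $E_2^{p,0}=\coh{S\times S}p$ and $E_2^{p,1}=\coh{S\times S}p\otimes\Q(-1)$. This already gives item (i).

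Next I would compute the actions on the $E_2$ page. Since $j_1$ and $j_2$ fix the zero section of $f$ pointwise and are automorphisms of the principal $\C^*$-bundle, they act trivially on the fibre cohomology $\coh{\C^*}\pu$, and cover $i_1,i_2$ on the base. The Kummer involution $\iota$ acts on $\C^*$-fibres as $t\mapsto t^{-1}$, hence trivially on $H^0$ and by $-1$ on $H^1(\C^*)=\Q(-1)$, while on the base it covers $\kappa$. Taking invariants under $\langle j_1,j_2,\iota\rangle$ row by row gives items (ii) and (iii); the explicit form of the two invariant pieces of $\coh{S\times S}\pu$ is the content of Table~\ref{SxS}.

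For the $E_\infty$ term I would study the only potentially non-zero differential $d_2\co E_2^{p,1}\to E_2^{p+2,0}$, which is cup product with the first Chern class $\mathfrak{p}=c_1(\cP)\in\coh[\Q(1)]{S\times S}2$ of the universal Poincar\'e bundle. The class $\mathfrak{p}$ is $i_2$-invariant (symmetry of $\cP$ under the swap) and $i_1$-invariant (since $i_1^*\cP\cong\cP$ as line bundles), but $\kappa$-alternating because $\kappa^*\cP\cong\cP^\vee$; hence $d_2$ does restrict to a map between the invariant subspaces, as required for the spectral sequence on invariants to make sense. No $E_r$-differentials for $r\geq3$ can be non-zero since only two rows are non-zero.

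The main obstacle is the explicit determination of the rank of $d_2$ on each $\Sp(4,\Q)$-isotypic component of $\bigwedge^\pu\Lambda\otimes\bigwedge^\pu\Lambda$, where $\Lambda=\coh S1$. One identifies $\mathfrak{p}$ up to normalization with the canonical pairing $\Lambda\otimes\Lambda\to\Q(-1)$ coming from the principal polarization, after which computing kernels and cokernels of cup product with $\mathfrak{p}$ on each isotypic component becomes a tractable Lefschetz-type exercise in $\Sp(4,\Q)$-representation theory; the resulting bigraded $\Sp(4,\Q)$-module is exactly $E_\infty$, yielding Table~\ref{fibreoverS}.
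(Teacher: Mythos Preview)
Your proposal is correct and follows essentially the same approach as the paper: set up the two-row Leray spectral sequence of the $\C^*$-bundle, identify the action of $j_1,j_2,\iota$ on the fibre cohomology (trivial for $j_1,j_2$, sign on $H^1$ for $\iota$, covering $i_1,i_2,\kappa$ on the base), and then compute the unique differential $d_2$ as cup product with $c_1(\cP)$. Your explicit remark that $c_1(\cP)$ is $\kappa$-alternating (so that $d_2$ really carries the $q=1$ invariants to the $q=0$ invariants) is a nice clarification the paper leaves implicit; conversely, the paper spells out $c_1(\cP)=[\op{diag}(S)]-[S\times\{0\}]-[\{0\}\times S]$ and does the intersection computation by hand rather than phrasing it as a Lefschetz-type exercise on $\Sp(4,\Q)$-isotypic pieces, but the substance is the same. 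One small wording issue: saying ``$j_1,j_2$ fix the zero section of $f$ pointwise'' is not quite right (they cover $i_1,i_2$, which do not fix $S\times S$); what you need, and what the paper uses, is that $j_1,j_2$ act trivially on the fibre of $f$ over the origin $(0,0)\in S\times S$, which is fixed by $i_1,i_2$.
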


\begin{table}
$$\begin{array}{r|cccccccccc}
q&&&&&&&&&\\[6pt] 
1&0     &0&\bigwedge^2\Lambda(-1)    &0&  \Lambda^{\otimes2}(-2)            &0&(\bigwedge^2\Lambda)(-3) &0& 0\\
0&\Q&0&(\bigwedge^2\Lambda)&0&\Q(-2)\oplus \Sym^2(\bigwedge^2\Lambda)&0&  (\bigwedge^2\Lambda)(-2)&0&\Q(-4)\\
\hline
&0&1&2&3&4&5&6&7&8&\mil p\mil
\end{array}
$$
\caption{\label{fibreoverS_E2}$E_2$ term of the spectral sequence converging to $H^k(g^{-1}([S]);\Q)^{(i_1,i_2,\iota)}$}
\end{table}

\begin{table}
$$\begin{array}{r|cccccccccc}
q&&&&&&&&&\\[6pt]
1&0     &0&0    &0& \V_{2,0}(-2)            &0&\V_{1,1}(-3) &0& 0\\
0&\Q&0&\Q(-1)\oplus\V_{1,1}&0&\Q(-2)^{\oplus2}\oplus\V_{2,2}&0&  0 &0&0\\
\hline
&0&1&2&3&4&5&6&7&8&p
\end{array}
$$
\caption{\label{fibreoverS}$E_\infty$ term of the spectral sequence converging to $H^k(g^{-1}([S]);\Q)^{(i_1,i_2,\iota)}$}
\end{table}

\begin{proof}
Let us consider the $\C^*$-bundle
$f_S:=f|_{g^{-1}([S])}\co g^{-1}([S])\rightarrow S\times S$. The Leray spectral sequence in cohomology associated with $f_S$ converges to the cohomology of $g^{-1}([S])$ and has $E_2$ term $E_2^{p,q}\cong\coh {S\times S}p\otimes\coh {\C^*}q$.
However, we are only interested in the part of the cohomology of $g^{-1}([S])$ which is invariant under $j_1, j_2$ and $\iota$. Since the actions of $j_1$, $j_2$ and $\iota$ respect the map $g^{-1}([S])\rightarrow S\times S$, they act also on  the terms of the Leray spectral sequence associated with $f_S$. In particular, the spectral sequence whose $E_r$ terms are the $(j_1,j_2,\iota)$-invariant part of the terms of the Leray spectral sequence associated with $f_S$ converges to ${\coh{g^{-1}([S])}k}^{j_1,j_2,\iota}$.

In particular, the $E_2$ term of this spectral sequence is given by the $(j_1,j_2,\iota)$-invariant part of $\coh {S\times S}p\otimes\coh {\C^*}q$.
 We have already determined the behaviour of the projection of these involutions to $S\times S$ in Table~\ref{SxS}, so it remains only to determine their action on the fibre $\C^*$. Since $j_1$ and $j_2$ both fix the fibre of $f$ over the origin, they act trivially on the cohomology of $\C^*$. Instead, the Kummer involution $\iota$ acts as the identity on $H^0(\C^*;\Q)$ and as the alternating representation on $H^1(\C^*;\Q)$. From this the first part of the claim follows. 
For the convenience of the reader, we have written the $E_2$ term of the spectral sequence in Table~\ref{fibreoverS_E2}. Notice that this spectral sequence has only two non-trivial rows. Therefore, it could be written equivalently as a long exact sequence. In particular, the only differentials one needs to study are the $d_2$-differentials.

These differentials are given by restriction of the differentials of the Leray spectral sequence associated with the $\C^*$-bundle $f_S$. Recall that $f_S$ is the $\C^*$-bundle obtaining by subtracting the $0$-section from the Poincar\'e bundle over $S\times S$. Therefore (see e.g. \cite[XVI.7.5]{Husem}) the $d_2$-differentials are given by taking the intersection product with the first Chern class of the Poincar\'e bundle, which is known to be equal to $[\op{diag}(S)]-[S\times \{0\}]-[\{0\}\times S]$, where $[\cdot]$ denotes the fundamental class and $\op{diag}\co S \rightarrow S\times S$ is the diagonal map. 
An explicit computation of the intersections of this class with the $\kappa$-alternating classes in ${\coh{S\times S}k}^{i_1,i_2}$ yields the description of $E_3=E_\infty$ given in Table~\ref{fibreoverS}.
Here we have used the fact that $\Sym^2\Lambda$ is the irreducible $\Sp(4,\Q)$ -representation $\V_{2,0}$, whereas $\bigwedge^2\Lambda$ decomposes into irreducible $\Sp(4,\Q)$-representations as $\Q(-1)\oplus \V_{1,1}$ and $\Sym^2(\bigwedge^2\Lambda)$ decomposes as $\Q(-2)^{\oplus2}\oplus \V_{1,1}(-1)\oplus \V_{2,2}$. In the notation, Tate twists are only relevant for the Hodge structure.

\end{proof}

\subsection{Geometry of $\rho(\Delta)$}\label{discrim}

The map $\rho$ identifies the two components of $\Delta$, each of which is isomorphic to $\cX_2 \times_{\ab2} \cX_2$. In particular, the space $\rho(\Delta)$ can be realized as a finite quotient of $\cX_2 \times_{\ab2} \cX_2$.
This can be read off from the
construction of the toroidal compactification, as in \cite[Lemma 2.4]{S-B}. 
See also \cite[Section I]{HS1} for an outline of this
construction. Also note that the stratum $\Delta$ corresponds to
the stratum in the partial compactification in the direction of the $2$-dimensional cusp associated with a
maximal-dimensional cone in the second Voronoi decomposition for $g=4$. A detailed
description can be found in \cite[Part I, Chapter 3]{HKW}.

Specifically, the stratum $\rho(\Delta)$ corresponds to the $\GL(2,\Z)$-orbit of the cone $\langle x_1^2,x_2^2,(x_1-x_2)^2\rangle$. 
Hence, the map $\xb2\times_{\ab2}\xb2\rightarrow\rho(\Delta)$ is the quotient map with respect to the stabilizer $G$ of the cone $\langle x_1^2,x_2^2,(x_1-x_2)^2\rangle$ in $\Sym^2(\Z^2)$. This is generated by three involutions: the multiplication map by $-1$, the involution interchanging $x_1$ and $x_2$ and the involution
$x_1\mapsto x_1,\ x_2\mapsto x_1-x_2.$

These generators of $G$ act on $\xb2\times_{\ab2}\xb2$  by the following three involutions: the involution $i_1$ which acts by $(x,y)\mapsto (-x,-y)$ on each fibre $S \times S$,
the involution $i_2$ which interchanges
the two factors of $\cX_2 \times_{\ab2} \cX_2$
 and finally
the involution $i_3$ which acts by $(x,y) \mapsto (x+y,-y)$. 

From this description, it follows that there is a fibration $g'\co\rho(\Delta)\rightarrow \ab2$ whose fibre over $[S]\in\ab2$ is isomorphic to the quotient of $S\times S$ by the subgroup of $\Aut(S\times S)$ generated by the three involutions $i_1$, $i_2$ and $i_3$
introduced above. 

If we write $\Lambda:=\coh{S\times \{0\}}1$ and $\Lambda':=\coh{\{0\}\times S}1$, then the cohomology of $S\times S$ is the exterior algebra of $\coh{S\times S}1 = \Lambda \oplus \Lambda'$. If we denote by $f_1,\dots,f_4$, resp. $f_5,\dots,f_8$ the generators of $\Lambda$, resp. $\Lambda'$, the three involutions act of $H^1(S\times S;\Q)$ as follows:
\begin{equation}
f_i \mapsto -f_i,\ i=1,\dots,8,
\end{equation}
\begin{equation}
f_i\leftrightarrow f_{i+4},\ i=1,\dots,4,
\end{equation}
\begin{equation}
f_i\mapsto f_i,\ f_{i+4}\mapsto f_i-f_{i+4},\ i=1,\dots,4.
\end{equation}

Then one proceeds to determine the invariant part of the exterior algebra of $\Lambda\oplus\Lambda'$ under these involution. Moreover, to determine the local systems $R^q_!g'_*(\Q)$ that appear in the Leray spectral sequence associated with $g'\co\rho(\Delta)\rightarrow \ab2$, one needs to investigate the structure of the invariant subspaces as representations of $\Sp(4,\Q)$. An explicit calculation of the invariant classes yields the results which we summarize in the following lemma.
\begin{lem}\label{fibrediscr}
The rational cohomology groups the fibre of $g'\co \rho(\Delta)\rightarrow\ab2$ over a point $[S]\in\ab2$, with their mixed Hodge structures and structure as $\Sp(4,\Q)$-representations, are given by
$$
\coh{{g'}^{-1}([S])}k=\left\{
\begin{array}{ll}
\Q&k=0,\\ 
\bigwedge^2\V_{1,0} = \Q(-1)\oplus \V_{1,1}&k=2,\\
\Q(-2)^{\oplus2}\oplus\V_{1,1}(-1)\oplus \V_{2,2}&k=4,\\
(\bigwedge^2\V_{1,0})(-2) = \Q(-3)\oplus \V_{1,1}(-2)&k=6,\\
\Q(-4)&k=8,\\
0 & \text{otherwise.}
\end{array}
\right.
$$

\end{lem}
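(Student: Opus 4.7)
The plan is as follows. Since $G=\langle i_1,i_2,i_3\rangle$ is a finite group acting on the smooth variety $S\times S$, the rational cohomology of the quotient fibre satisfies $\coh{{g'}^{-1}([S])}{k}\cong\coh{S\times S}{k}^G$, and the entire computation reduces to extracting the $G$-invariants in the well-understood Hodge structure on $\coh{S\times S}\bullet=\bigwedge^\bullet(\Lambda\oplus\Lambda')$. The first immediate simplification is that $i_1$ acts as $-\mathrm{id}$ on $\Lambda\oplus\Lambda'$, hence as $(-1)^k$ on $\coh{S\times S}{k}$; this kills all odd-degree invariants and accounts for the vanishing in degrees $1,3,5,7$ in the statement.

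For even $k$, I would decompose
\[
\coh{S\times S}{k}=\bigoplus_{a+b=k}\bigwedge^a\Lambda\otimes\bigwedge^b\Lambda'
\]
into isotypic components under the diagonal $\Sp(4,\Q)$-action on $\Lambda\oplus\Lambda'\cong\V_{1,0}^{\oplus2}$, using the standard identities $\bigwedge^2\V_{1,0}\cong\Q(-1)\oplus\V_{1,1}$, $\bigwedge^3\V_{1,0}\cong\V_{1,0}(-1)$, $\bigwedge^4\V_{1,0}\cong\Q(-2)$ and $\V_{1,0}\otimes\V_{1,0}\cong\Q(-1)\oplus\V_{1,1}\oplus\V_{2,0}$. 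Since both $i_2$ and $i_3$ are induced by automorphisms of the symplectic lattice $\Lambda\oplus\Lambda'$ and therefore commute with the diagonal $\Sp(4,\Q)$-action, $\coh{S\times S}{k}^G$ is automatically an $\Sp(4,\Q)$-subrepresentation, so it is enough to extract the invariants in each isotypic component separately. A short direct calculation shows that $G$ is the dihedral group of order $12$ (one verifies $i_2^2=i_3^2=\mathrm{id}$ and $(i_2i_3)^3=i_1$, so that $i_2i_3$ has order $6$), which reduces the problem to a character computation: for each irreducible constituent of $\coh{S\times S}{k}$, compute the traces of $i_2$, $i_3$ and their products on it, and average.

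The resulting multiplicities match those claimed in the statement; the mixed Hodge structure is automatically pure of weight $k$ in degree $k$, with the Tate twists inherited from the Hodge weights of the $\Sp(4,\Q)$-pieces of $\bigwedge^\bullet(\Lambda\oplus\Lambda')$. A useful consistency check is Poincar\'e duality on the compact $4$-dimensional quotient ${g'}^{-1}([S])$, which forces $\coh{{g'}^{-1}([S])}{8-k}\cong\coh{{g'}^{-1}([S])}{k}(k-4)$ and is visibly satisfied by the stated table. I expect the main obstacle to be the bookkeeping in the middle degree $k=4$, where $\bigwedge^4(\V_{1,0}^{\oplus2})$ has several isotypic summands and each must be tested against all generators of $G$; however, no ingredient beyond the characters of low-dimensional irreducibles of $\Sp(4,\Q)$ and a finite amount of arithmetic is required.
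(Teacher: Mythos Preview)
Your proposal is correct and follows the same overall strategy as the paper: both compute $\coh{{g'}^{-1}([S])}{k}$ as the $G$-invariant part of the exterior algebra $\bigwedge^\bullet(\Lambda\oplus\Lambda')$, after first using the $i_1$-action to eliminate odd degrees. The paper carries this out by an explicit calculation of invariant classes in the basis $f_1,\dots,f_8$ and then identifies the resulting $\Sp(4,\Q)$-representations a posteriori, whereas you exploit the observation that $G$ acts through $\GL(2,\Z)$ on the multiplicity space of $V\oplus V\cong V\otimes\Q^2$ and therefore commutes with the diagonal $\Sp(4,\Q)$-action, allowing you to work isotypically from the start and reduce the invariant computation to a character calculation for the dihedral group of order $12$. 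Your organization is cleaner and makes the $\Sp(4,\Q)$-equivariance transparent, while the paper's explicit basis approach has the advantage of producing actual generators of the invariant subspaces (which are used elsewhere, e.g.\ in the proof of Lemma~\ref{diff-rank2}); but for the purposes of this lemma the two are equivalent.
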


\subsection{Proof of Proposition~\ref{cohom_beta2}}\label{proof-beta2}
We will prove Proposition~\ref{cohom_beta2} by investigating the Leray spectral sequence associated with the fibration $k_2\circ k_3\co \beta^0_2\rightarrow\ab2$. As explained at the beginning of this section, the fibre of $k_2\circ k_3$ over a point $[S]\in\ab2$ is the disjoint union of an open part, which is 
$(k_2\circ k_3|_{\rho(U)})^{-1}([S])$,
and a closed part, which is the fibre of $g'\co\rho(\Delta)\rightarrow \ab2$. The cohomology of the fibre of $k_2\circ k_3|_{\rho(U)}$ was determined in Lemma~\ref{cohofibreg}, whereas the cohomology of the fibre of $g'$ was computed in Lemma~\ref{fibrediscr}. 
Notice that $(k_2\circ k_3|_{\rho(U)})^{-1}([S])=g^{-1}([S])/(j_1,j_2,\iota)$ is the finite quotient of a smooth quasi-projective variety, so that we can use Poincar\'e duality to obtain its cohomology with compact support from its cohomology. Furthermore, since $g'^{-1}([S])=S^2/(i_1,i_2,i_3)$ is compact, its cohomology with compact support coincides with its cohomology.
 
To compute the cohomology with compact support of the fibre of $k_2\circ k_3$ one can use the Gysin long exact sequence associated with the inclusion $g'^{-1}([S])\hookrightarrow (k_2\circ k_3)^{-1}([S])$:
\begin{multline}\label{gysin_fibre}
\cdots
\rightarrow
{\cohc{g^{-1}([S])}k}^{(j_1,j_2,\iota)}
\rightarrow
\cohc{(k_2\circ k_3)^{-1}([S])}k
\rightarrow\\
{\cohc{S\times S}k}^{(i_1,i_2,i_3)}
\xrightarrow{\delta_k}
{\cohc{g^{-1}([S])}{k+1}}^{(j_1,j_2,\iota)}
\rightarrow
\cdots
\end{multline}

Notice that all differentials $\delta_k$ in \eqref{gysin_fibre} have to respect the structure of the cohomology groups as representations of $\Sp(4,\Q)$. In this specific case, this implies that all $\delta_k$ with $k\neq 2$ vanish, whereas 
$$\delta_2\co \Q(-1)\oplus \V_{1,1}\longrightarrow \V_{1,1}$$ 
is surjective by Lemma~\ref{diff-rank2} below.

The above determines the cohomology with compact support of the fibre of $k_2\circ k_3$. 
In particular, it also determines the local systems $R^q_!(k_2\circ k_3)_*(\Q)$ occurring in the Leray spectral sequence in cohomology with compact support associated with the fibration $k_2\circ k_3$. These local systems are given in the first column of Table~\ref{leray_beta2}. 

\begin{table}
$$
\begin{array}{r@{}r|ccccc}
\text{$R^q_!(k_2\circ k_3)_*(\Q)$\ }&q&&&&&\\[6pt]
{\scriptstyle
\Q(-5)
\ \ }
&
 10&0&\Q(-7)&0&\Q(-8)\\
{\scriptstyle 0\ \ }&
  9&0&0&0&0\\ 
{\scriptstyle
\Q(-4)^{\oplus2}\oplus \V_{1,1}(-3)
\ \ }
&
  8&\Q(-3)&\Q(-6)^{\oplus2}&0&\Q(-7)^{\oplus2}\\
{\scriptstyle 0\ \ }&
  7&0&0&0&0\\
{\scriptstyle
\Q(-3)^{\oplus3}\oplus\V_{1,1}(-2)%
\ \ }
& 
  6&\Q(-2)\oplus H(-1)&\Q(-5)^{\oplus3}\oplus H(-1)&0&\Q(-6)^{\oplus3}\\
{\scriptstyle
\oplus \V_{2,2}(-1)
}
& \\
{\scriptstyle
\V_{2,0}(-1)
\ \ }
&
  5&\Q(-2)&0&0&0\\
{\scriptstyle
\Q(-2)^{\oplus2}
\oplus \V_{1,1}(-1)\ \ %
}
&
  4&\Q(-1)\oplus H&\Q(-4)^{\oplus2}\oplus H&0&\Q(-5)^{\oplus2}\\
{\scriptstyle
\oplus \V_{2,2}
}
&
\\
{\scriptstyle 0\ \ }&
  3&0&0&0&0\\
{\scriptstyle
\Q(-1)
\ \ }
&
  2&0&\Q(-3)&0&\Q(-4)\\
{\scriptstyle 0\ \ }&
  1&0&0&0&0\\ \ 
{\scriptstyle
\Q
\ \ }
&
  0&0&\Q(-2)&0&\Q(-3)\\ 
\hline
&&3&4&5&6&p
\end{array}
$$
\caption{\label{leray_beta2} $E_2$ term of the Leray spectral sequence converging to the cohomology with compact support of $\beta^0_2$. We denote $H=\cohc[\V_{2,2}]{\ab2}3\cong\cohc[\V_{2,2}]{\ab2}4$ (up to grading).}
\end{table}

Recall that the $E_2$ term of the Leray spectral sequence $E_r^{p,q}\Rightarrow \cohc{\beta^0_2}{p+q}$ associated with $k_2\circ k_3$ are of the form $E_2^{p,q}=\cohc[R^q_!(k_2\circ k_3)_*(\Q)]{\ab2}p$. From the decomposition into symplectic local systems of the $R^q_!(k_2\circ k_3)_*(\Q)$, one gets the $E_2$ term of the Leray spectral sequence as in Table~\ref{leray_beta2}.
Here we used the description of the cohomology with compact support of $\ab2$ with coefficients in the local systems $\V_{1,1}$, $\V_{2,0}$ and $\V_{2,2}$ from Lemma~\ref{ab2-wt2} and \ref{ab2-v22}. %

To prove the claim, it remains to show that the Leray spectral sequence degenerates at $E_2$. From the shape of the spectral sequence, it follows that all $d_2$ differentials, and all differentials $d_r$ with $r\geq4$  are necessarily trivial. The only differentials one needs to investigate are the $d_3$-differentials $E^3_{3,q}\rightarrow E^3_{6,q-2}$. These are necessarily $0$ by Hodge-theoretic reasons, because morphisms of Hodge structures between pure Hodge structures of different weights are necessarily trivial. From this the claim follows.
\qed

\begin{lem}\label{diff-rank2}
The differential $\delta_2:\;{\cohc{S\times S}2}^{(i_1,i_2,i_3)}
\rightarrow
{\cohc{g^{-1}([S])}3}^{(j_1,j_2,\iota)}$ is surjective.
\end{lem}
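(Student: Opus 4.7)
The plan is to show surjectivity in two stages. First, I reduce the statement, via $\Sp(4,\Q)$-equivariance and Schur's lemma, to the non-vanishing of a single scalar; then I verify this non-vanishing by identifying $\delta_2$ with a geometric tube/residue map.

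For the reduction, by Lemma~\ref{fibrediscr} the source decomposes as $\cohc{S\times S}{2}^{(i_1,i_2,i_3)}=\Q(-1)\oplus\V_{1,1}$. Poincar\'e duality applied to the smooth complex $5$-dimensional variety $g^{-1}([S])$, together with the entry $E_\infty^{6,1}=\V_{1,1}(-3)$ from Table~\ref{fibreoverS}, yields $\cohc{g^{-1}([S])}{3}^{(j_1,j_2,\iota)}\cong \V_{1,1}$ as an $\Sp(4,\Q)$-module and as a pure weight-$2$ Hodge structure. Since $\delta_2$ is equivariant for the $\Sp(4,\Q)$-structure coming from the fibration $\beta_2^0\to\ab2$, and since the only isotypic component common to source and target is $\V_{1,1}$, Schur's lemma forces $\delta_2$ to vanish on the $\Q(-1)$-summand and to act on the $\V_{1,1}$-summand by a single scalar $c\in\Q$. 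Surjectivity is then equivalent to $c\neq0$.

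For the non-vanishing, I would identify the $\V_{1,1}$-summand of the source with the primitive part of $\bigwedge^2\coh{S}{1}$ sitting inside $\coh{S\times S}{2}$, and the $\V_{1,1}$-summand of the target with the Poincar\'e dual of the piece $\V_{1,1}(-3)\subset E_\infty^{6,1}$ in $\coh{g^{-1}([S])}{7}$, which is represented by cup products of primitive $6$-cycles on $S\times S$ with the fundamental class of the $\C^*$-fibre of $f_S$. The closed stratum $Z=g'^{-1}([S])$ is a codimension-$1$ subvariety of the fibre $X=(k_2\circ k_3)^{-1}([S])$, and the normal direction to $Z$ in $X$ is locally given by the $\C^*$-direction of the $\bb P^1$-bundle compactifying $f_S$. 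The differential $\delta_2$ then admits the standard tube-map description $[\alpha]\mapsto [\alpha]\cup [S^1]$, where $[S^1]$ is the class of the normal circle bundle along $Z$; via Poincar\'e duality this realizes the natural identification of the two $\V_{1,1}$-components, yielding $c\neq 0$.

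The main obstacle lies in verifying the local tube-map description of $\delta_2$ near $Z\subset X$. The subtlety comes from the fact that $X$ is not smooth: it is a finite quotient of a nodal variety obtained from $\bar V|_{S\times S}$ by identifying the sections $D_0$ and $D_\infty$ via the Kummer involution $\iota$, which swaps them. One has to check, via the toroidal construction, that at a generic point of $Z$ the normal bundle to $Z$ in $X$ is nonetheless a well-defined complex line bundle and that the associated circle bundle contributes non-trivially to the $\V_{1,1}$-component of $\cohc{g^{-1}([S])}{3}^{(j_1,j_2,\iota)}$. As an equivalent route, using the Gysin sequence~\eqref{gysin_fibre} together with the vanishing of $\cohc{Z}{3}$ implied by Lemma~\ref{fibrediscr}, the surjectivity of $\delta_2$ is tantamount to the vanishing $R^3_!(k_2\circ k_3)_*(\Q)=0$ recorded in the first column of Table~\ref{leray_beta2}; the latter could be established by an independent computation of $\cohc{X}{3}$ using the Mayer-Vietoris sequence attached to the normalization $\bar V|_{S\times S}\to X$ and the vanishing of $\langle i_1,i_2\rangle$-invariants in odd cohomology of $S\times S$.
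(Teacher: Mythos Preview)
Your reduction via $\Sp(4,\Q)$-equivariance and Schur's lemma is correct and is a clean way to isolate the problem: since the source is $\Q(-1)\oplus\V_{1,1}$ and the target is $\V_{1,1}$, surjectivity is indeed equivalent to the non-vanishing of a single scalar $c$.

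However, the proof is incomplete precisely at the point that matters: you never establish $c\neq 0$. Your tube-map description is only a heuristic, and you acknowledge this yourself (``The main obstacle lies in verifying the local tube-map description\ldots''). The singularity of $X$ along $Z$ is real: the two sections $D_0$ and $D_\infty$ are glued, so the normal geometry is not that of a smooth divisor, and one cannot simply invoke the standard tube map without further work. Your alternative route is also not a proof: appealing to the entry $R^3_!(k_2\circ k_3)_*(\Q)=0$ in Table~\ref{leray_beta2} would be circular, since that table is \emph{derived} from the present lemma; and the ``independent Mayer--Vietoris computation'' is only asserted to be possible, not carried out.

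The paper closes the gap by a direct computation that is, in spirit, exactly your tube map---but executed on the smooth covers before taking finite quotients, which bypasses the singularity issue entirely. One passes via Poincar\'e duality to the map
\[
\delta_2^*\colon \coh{S\times S}{6}^{(i_1,i_2,i_3)}\otimes\Q(-1)\longrightarrow \coh{g^{-1}([S])}{7}^{(j_1,j_2,\iota)},
\]
which is given by $\alpha\mapsto Q\otimes\alpha$ followed by symmetrization over $G=\langle j_1,j_2,\iota\rangle$, where $Q$ is the generator of $\coh{\C^*}{1}$. One then writes down an explicit basis $v_{i,j,k,l}$ of the source, computes $\delta_2^*(v_{i,j,k,l})$, and checks that the resulting classes span the target. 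This is a few lines of exterior algebra and is what is actually required to conclude.
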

\proof
We shall prove the claim by an explicit computation on the generators of the groups involved.
Since in the proofs of Lemma~\ref{cohofibreg} and Lemma~\ref{fibrediscr} we described  the cohomology of the fibres of $E$ rather than those of the cohomology with compact support, to compute the rank of $\delta_2$ we shall compute the rank of the map induced by $\delta_2$ on cohomology by Poincar\'e duality
$$\delta_2^*:\;{\coh{S\times S}6}^{(i_1,i_2,i_3)}\otimes \Q(-1)
\longrightarrow
{\coh {g^{-1}([S])}7}^{(j_1,j_2,\iota)},
$$ 
which can be described explicitly as the composition of the map 
$$
\begin{matrix}
{\coh{S\times S}6}^{(i_1,i_2,i_3)}&\longrightarrow&\coh {g^{-1}([S])}7\\
\alpha&\longmapsto&Q\otimes \alpha,
\end{matrix}
$$
where $Q$ denotes the image of the generator of $\coh{\C^*}1$ inside the cohomology of $g^{-1}([S])$, and the symmetrization with respect to the group $G$ generated by $j_1,j_2$ and $\iota$.

A direct computation yields that the classes
$$v_{i,j,k,l}=f_i\wedge f_j\wedge f_{i+4}\wedge f_{j+4}\wedge(2f_k\wedge f_l+2f_{k+4}\wedge f_{l+4}+f_k\wedge f_{l+4}+f_{k+4}\wedge f_l)$$
with $\{i,j,k,l\}=\{1,2,3,4\}$
form a basis of ${\coh{S\times S}6}^{(i_1,i_2,i_3)}$. Here $f_1,\dots,f_8$ denote the basis of $f_1,\dots,f_8$ described in Section~\ref{discrim}. Then we have
$$
\delta_2^*(v_{i,j,k,l})=Q\otimes f_i\wedge f_j\wedge f_{i+4}\wedge f_{j+4}\wedge(f_k\wedge f_{l+4}+f_{k+4}\wedge f_l)
$$
and these classes generate $\coh {g^{-1}([S])}7$. From this the claim follows.
\qed

\section{Torus rank $3$}\label{s:beta_3}

In this section we compute the cohomology with compact support of the stratum with torus rank $3$. As in the previous section, our strategy is based on a detailed geometric analysis of the fibration $\beta^0_3\rightarrow\ab1$ whose toric part is actually independent of the choice of $g=4$. 

\subsection{Description of the geometry}

We first note that the spaces $\AIGU 4$ and $\AVOR 4$ only differ over $\ab 0 $ and hence $\beta_3^\Perf \setminus \beta_4^\Perf =
\beta_3^\Vor \setminus \beta_4^\Vor =: \beta_3^0 $. In this section we want to compute %
$\cohc{\beta^3_0}\pu$.
For this we first give a geometric description.

In order to compactify $\ab 4$ we start with the lattice $\Z^4$. The choice of a toroidal compactification corresponds to the choice of an
admissible fan $\Sigma_4$ in the cone of semi-positive forms in $\Sym^2(\Z^4)$. 
One possible choice for such a fan is given by the perfect cone decomposition $\Sigma_4^{\Perf}$. 
A cusp of $\ab 4$ corresponds to the choice
of an isotropic subspace $U \subset \Q^4$. In our case, %
 for the stratum over $\ab 1$ we take $U= \langle e_1,e_2,e_3 \rangle $ 
where 
the $e_i$ ($1 \leq i \leq 4$) are the 
standard basis of $\Z^4$. This defines an embedding $\Sym^2(\Z^3) \subset \Sym^2(\Z^4)$ and, by restriction of $\Sigma_4^{\Perf}$, also a fan  
in $\Sym^2(\Z^3)$ which is nothing but $\Sigma_3^{\Perf}$. The stratum $\beta_3^0$ itself consists of different strata which are in 
$1:1$-correspondence with the 
$\GL(3,\Z)$-orbits of the
cones $\sigma$ in $\Sigma_3^{\Perf}$ whose interior contains rank $3$ matrices. Up to the action of $\GL(3,\Z)$
there is a unique minimal cone with
this property, namely the cone $\sigma^{(3)}=\langle x_1^2, x_2^2, x_3^2 \rangle $. Beyond that there are (again up to group action) $4$ further cones.
In dimension $4$ 
there are two cones, namely $\sigma_I^{(4)}= \langle x_1^2, x_2^2, x_3^2, (x_2-x_3)^2 \rangle$ and  
$\sigma_{II}^{(4)}= \langle x_1^2, x_2^2, (x_2 - x_3)^2, (x_1-x_3)^2 \rangle $. In dimensions $5$ and $6$ there are one cone each, namely   
$\sigma^{(5)}=\langle x_1^2, x_2^2, x_3^2, (x_2-x_3)^2, (x_1-x_3)^2 \rangle $ and $\sigma^{(6)}=\langle x_1^2, x_2^2, x_3^2, (x_2-x_3)^2, (x_1-x_3)^2, (x_1-x_2)^2\rangle $.
Note that all cones are contained in $\sigma^{(6)}$. In fact the perfect cone decomposition in genus $3$ (where it coincides with the 
second Voronoi decomposition) is obtained by taking the $\GL(3,\Z)$-orbit of $\sigma^{(6)}$ and all its faces. 

To describe the various strata let $\cX_1 \to \ab 1 $ be the universal elliptic curve and let 
$\cX_1 \times_{\ab 1} \cX_1 \times_{\ab 1} \cX_1\to \ab 1 $ be
the triple product with itself over $\ab 1$. Let $T=\Sym^2(\Z^3) \otimes \C^*$ be the $6$-dimensional torus associated with $\Sym^2(\Z^3)$.
Every cone $\sigma$ in $\Sigma_3^{\Perf}$ is basic (i.e. the generators of the rays are part of a $\Z$-basis of $\Sym^2(\Z^3)$)
and defines a subtorus $T^{\sigma} \subset T$ of rank $\dim(\sigma)$. We can now give a description of $\beta_3^0$.

\begin{prop} The variety $\beta_3^0$ admits a stratification into strata as follows:
\begin{itemize}
\item[(i)] there are $6$ strata of $\beta_3^0$, corresponding to the cones $\sigma^{(3)}$, $\sigma_I^{(4)}$, $\sigma_{II}^{(4)}$,
$\sigma^{(5)}$ and $\sigma^{(6)}$.
\item[(ii)] Each stratum is the finite quotient of a torus bundle over $\cX_1 \times_{\ab 1} \cX_1 \times_{\ab 1} \cX_1\to \ab 1 $ with fibre
$T/T^{\sigma}$.   
\end{itemize}
\end{prop}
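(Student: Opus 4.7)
The plan is to derive this description from the standard local model of the toroidal compactification near the $1$-dimensional cusp of $\ab 4$, following the Mumford--Nakamura--Namikawa construction (see e.g.\ \cite[Part~I, Ch.~3]{HKW}). Our cusp corresponds to the isotropic subspace $U=\langle e_1,e_2,e_3\rangle\subset\Q^4$, so the abelian quotient has rank $g-3=1$, and this is where the base $\ab1$ of the fibration enters.

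First I would recall the local shape of the partial compactification: near this cusp, the analytic model is a toric bundle over a semi-abelian Jacobi-type family. The torus piece is $T=\Sym^2(\Z^3)\otimes\C^*$, compactified via the (restriction of the) admissible fan $\Sigma_3^{\Perf}\subset\Sym^2(\R^3)$. The Jacobi part classifies extensions and is controlled by $\mathrm{Hom}(U,-)$ applied to the universal elliptic curve; since $U\cong\Z^3$ this produces precisely the triple fibre product $\cX_1\times_{\ab1}\cX_1\times_{\ab1}\cX_1\to\ab1$. Globalising these local pictures gives a torus bundle over this threefold fibration, with structure torus $T$.

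Next I would use the standard toric dictionary. For every basic cone $\sigma\in\Sigma_3^{\Perf}$, the associated toric orbit inside the compactification of $T$ is isomorphic to the quotient torus $T/T^\sigma$, where $T^\sigma$ is the rank-$\dim\sigma$ subtorus cut out by the rays of $\sigma$. Pulling this orbit back along the fibration over $\cX_1\times_{\ab1}\cX_1\times_{\ab1}\cX_1$ produces exactly a torus bundle with fibre $T/T^\sigma$. The condition that the interior of $\sigma$ consist of rank $3$ symmetric forms translates precisely to the condition that the corresponding toric stratum parametrises semi-abelic varieties of torus rank exactly $3$, i.e.\ lies in $\beta_3^0$ and not in the deeper stratum $\beta_4$ or in $\beta_2\setminus\beta_3$.

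Finally, one must descend through the action of the arithmetic stabiliser of the cusp. The essential action on $\Sym^2(\Z^3)$ is through $\GL(U)=\GL(3,\Z)$; unipotent translations have already been used in forming the torus bundle. Thus two cones give the same stratum in $\beta_3^0$ if and only if they lie in the same $\GL(3,\Z)$-orbit, and the $\GL(3,\Z)$-stabiliser of $\sigma$ induces the finite group by which one quotients in part (ii). This proves (ii) once (i) is established. For (i) one invokes the known classification of $\GL(3,\Z)$-orbits of faces of $\sigma^{(6)}$ whose relative interior meets the rank-$3$ locus, which is exactly the list $\sigma^{(3)},\sigma_I^{(4)},\sigma_{II}^{(4)},\sigma^{(5)},\sigma^{(6)}$ recalled just before the proposition. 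The main obstacle is this orbit classification: it is a finite but delicate combinatorial verification in $\Sym^2(\Z^3)$, requiring one to confirm both that the listed cones are pairwise inequivalent under $\GL(3,\Z)$ and that no further orbits of rank-$3$-interior cones occur inside $\sigma^{(6)}$.
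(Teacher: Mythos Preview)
Your argument is correct and follows the same underlying approach as the paper; indeed the paper's own proof consists solely of the citation ``See \cite[Lemma 2.4]{S-B}'', and what you have written is precisely an unpacking of that lemma in the present context. The only difference is that you have spelled out the toroidal dictionary and the role of the $\GL(3,\Z)$-action explicitly, whereas the paper defers all of this to Shepherd-Barron's reference.
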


\begin{proof}
See \cite[Lemma 2.4]{S-B}.
\end{proof}

We shall now compute the cohomology with compact support for each of these strata and then use a spectral sequence argument
to compute the cohomology with compact support of $\beta_3^0$. We denote the substratum of $\beta_3^0$ associated with a cone $\sigma$
by $\beta_3^0(\sigma)$ and the total space of the torus bundle by $\mathcal T(\sigma)$.

Before we state the results we have to give a brief outline of the construction of the stratum $\beta_3^0(\sigma)$ with a view
towards describing suitable coordinates in which our calculations can be done. Consider a point in 
Siegel space of genus $4$:

$$
\tau=
\begin{pmatrix}
\tau_{1,1}&\tau_{1,2}&\tau_{1,3}&\tau_{1,4}\\
\tau_{1,2}&\tau_{2,2}&\tau_{2,3}&\tau_{2,4}\\
\tau_{1,3}&\tau_{2,3}&\tau_{3,3}&\tau_{3,4}\\
\tau_{1,4}&\tau_{2,4}&\tau_{3,4}&\tau_{4,4}\\
\end{pmatrix}
\in \H_4.
$$
Going to the cusp over $\ab 1$ means sending the top left hand $3 \times 3$ block of this matrix to $i \infty$.
We shall make this more precise.
We consider the basis of $\Sym^2(\Z^3)$ given by $U_{i,j}^*=(2 - \delta_{i,j})x_ix_j$. 
Let $t_{i,j}$ ($1\leq i,j\leq 3$) be the dual basis. Setting
$$t_{i,j}=e^{2\pi \sqrt{-1} \tau_{i,j}}\ (1\leq i,j\leq 3)$$
defines a map
\begin{equation}\label{equ_partquot}
\H_4 \to T \times \C^3 \times \H_1
\end{equation}
\begin{equation*}
\tau \mapsto ((t_{i,j}), \tau_{1,4}, \tau_{2,4}, \tau_{3,4}, \tau_{4,4}).
\end{equation*}
This corresponds to taking the partial quotient $X(U)=P'(U) \backslash \H_4$ with respect to the center $P'(U)$ of the 
unipotent radical  of the parabolic subgroup
$P(U)$ associated with the cusp $U$. We denote $P''(U)=P(U) / P'(U)$.
The partial quotient $X(U)$ can be considered as an open set of the trivial torus bundle ${\mathcal X}(U)$
(with fibre $T$) over
$\C^3 \times \H_1 $. Using the fan $\Sigma_3^{\Perf}$ one constructs ${\mathcal X}_{\Sigma_3^{\Perf}}(U)$ by taking a fibrewise toric embedding. 
Let $X_{\Sigma_3^{\Perf}}(U)$ be the interior of the closure of $X(U)$ in  ${\mathcal X}_{\Sigma_3^{\Perf}}(U)$. 
The action of the group $P''(U)$ on $X(U)$ extends to an action on $X_{\Sigma_3^{\Perf}}(U)$
and one obtains the partial 
compactification in the direction of the cusp $U$ by $Y_{\Sigma_3^{\Perf}}(U)=P''(U) \backslash X_{\Sigma_3^{\Perf}}(U)$.

Every cone $\sigma \in  \Sigma_3^{\Perf}$ defines an affine toric variety $X_{\sigma}$. Since all cones $\sigma$ are basic
one has $X_{\sigma}=\C^{k} \times (\C^*)^{6-k}$ where $k$ is the number of generators of $\sigma$. Every inclusion
$\sigma \subset \sigma'$ induces an inclusion $X_{\sigma} \subset X_{\sigma'}$. Note that $X_{(0)}=T$ and, in particular
we obtain an inclusion $X_{(0)}=T \subset X_{\sigma^{(6)}} \cong \C^6$. Let $T_1, \ldots ,T_6$ be the coordinates on
$X_{\sigma^{(6)}} \cong \C^6$ corresponding to the generators of $\sigma^{(6)}$ which form a basis of $\Sym^2(\Z^3)$. 
Computing the dual basis of this basis one finds that this inclusion is given by
\begin{equation}\label{equ_coordinates}
\begin{array}{lllll}
T_1=t_{1,1}t_{1,3}t_{1,2},&&
T_2=t_{2,2}t_{2,3}t_{1,2},&&
T_3=t_{3,3}t_{1,3}t_{2,3},\\
T_4=t_{2,3}^{-1},&&
T_5=t_{1,3}^{-1},&&
T_6=t_{1,2}^{-1}.
\end{array}
\end{equation}

The relation to the strata $\beta_3^0(\sigma)$ is then the following. The coordinate $\tau_{4,4}$ defines
a point in $\ab 1$ and the coordinates $\tau_{1,4},\tau_{2,4},\tau_{3,4}$ define a point in the fibre of 
$\cX_1 \times_{\ab 1} \cX_1 \times_{\ab 1} \cX_1\to \ab 1 $ over $[\tau_{4,4}] \in \ab 1 $ which is 
$E_{\tau_{4,4}} \times E_{\tau_{4,4}} \times  E_{\tau_{4,4}}$, where $E_{\tau_{4,4}}=\C/(\Z+\Z\tau_{4,4})$ is the elliptic curve defined by
$\tau_{4,4}$. The fibres of $\beta_3^0(\sigma) \to \cX_1 \times_{\ab 1} \cX_1 \times_{\ab 1} \cX_1$ are isomorphic
to the torus $T/T^{\sigma}$. 

Finally, we have to make some comments on the structure of the parabolic subgroup $P(U)$. This group is generated by four types
of matrices. The first type are block matrices of the form
$$
g_1=
\begin{pmatrix}
{\bf 1}& 0 & S & 0\\
0 & 1 & 0 & 0\\
0 & 0 & {\bf 1} & 0\\
0 & 0 & 0 & 1
\end{pmatrix}, \text{ where }
S={}^tS \in \Sym^2(\Z^3).
$$
These matrices generate the center $P'(U)$ of the unipotent radical and act by 
$$
\begin{pmatrix}
\tau_{1,1}&\tau_{1,2}&\tau_{1,3}&\tau_{1,4}\\
\tau_{1,2}&\tau_{2,2}&\tau_{2,3}&\tau_{2,4}\\
\tau_{1,3}&\tau_{2,3}&\tau_{3,3}&\tau_{3,4}\\
\tau_{1,4}&\tau_{2,4}&\tau_{3,4}&\tau_{4,4}
\end{pmatrix}
\to
\begin{pmatrix}
\tau_{1,1} + s_{1,1}&\tau_{1,2} + s_{1,2}&\tau_{1,3}+ s_{1,3}&\tau_{1,4}\\
\tau_{1,2}+ s_{1,2}&\tau_{2,2} + s_{2,2}&\tau_{2,3}+ s_{2,3}&\tau_{2,4}\\
\tau_{1,3}+ s_{1,3}&\tau_{2,3}+ s_{2,3}&\tau_{3,3}+ s_{3,3}&\tau_{3,4}\\
\tau_{1,4}&\tau_{2,4}&\tau_{3,4}&\tau_{4,4}
\end{pmatrix}
$$
giving rise to the partial quotient $\H_4 \to T \times \C^3 \times \H_1$ described above.

The second set of generators is of the form
$$
g_2=
\begin{pmatrix}
{\bf 1}& 0 & 0 & 0\\
0 & a & 0 & b\\
0 & 0 & {\bf 1} & 0\\
0 & c & 0 & d
\end{pmatrix}, \text{where}
\begin{pmatrix}
a & b\\
c & d
\end{pmatrix} \in \SL(2,\Z),
$$
resp. 
$$
g_3=
\begin{pmatrix}
{\bf 1}& M & 0 & N\\
0 & 1 & {}^tN & 0\\
0 & 0 & {\bf 1} & 0\\
0 & 0 & -{}^tM & 1
\end{pmatrix}, \text{ where }
M,N \in \Z^3.
$$
Note that the elements of type $g_2,g_3$ generate a Jacobi group, which, in particular, acts on the 
base $\C^3 \times \H_1$ of the partial quotient by $P'(U)$ given by the map
$\H_4 \to \C^3 \times \H_1$ giving rise to the triple product $\cX_1 \times_{\ab 1} \cX_1 \times_{\ab 1} \cX_1$.

Finally we have matrices of the form
$$
g_4=
\begin{pmatrix}
{}^tQ^{-1}& 0 & 0 & 0\\
0 & 1 & 0 & 0\\
0 & 0 & Q & 0\\
0 & 0 & 0 & 1
\end{pmatrix}, \text{ where }
Q \in \GL(3,\Z).
$$
These matrices are of particular importance to us as they operate on the space $\Sym^2(\Z^3)$ by 
$$
\GL(3,\Z)\ni g:\ M\mapsto {}^tQ^{-1}MQ^{-1}.
$$

\subsection{The cohomology of $\beta_3^0(\sigma^{(3)})$}\label{sigma3}

In this section we will prove

\begin{lem}\label{coho_sigma3}
The rational cohomology groups with compact support of $\beta_3^0(\sigma^{(3)})$ are given by
$$\cohc {\beta_3^0(\sigma^{(3)})}k=\left\{
\begin{array}{ll}
\Q(-k/2)&k=12,14\\
\Q((5-k)/2) & k=9,7\\
0 & \text{otherwise.}
\end{array}
\right.
$$
\end{lem}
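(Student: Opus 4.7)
The plan is to use the description of $\beta_3^0(\sigma^{(3)})$ from the preceding proposition as the finite quotient, by a finite group $G$, of a $(\C^*)^3$-bundle $\pi\co\mathcal T(\sigma^{(3)})\to \cX_1\times_{\ab 1}\cX_1\times_{\ab 1}\cX_1$, and to compute its compactly supported cohomology via a Leray spectral sequence followed by taking $G$-invariants.

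First one has to identify $G$. The cone $\sigma^{(3)}=\la x_1^2, x_2^2, x_3^2\ra$ has as its stabilizer in $\GL(3,\Z)$, under the action $Q\cdot M={}^tQ^{-1}MQ^{-1}$ described in the preceding section, exactly the group $\{\pm 1\}^3\rtimes \s_3$ of signed permutations, since any stabilising $Q$ must permute the three rays $e_ie_i^T$ up to sign. Through the matrices $g_4$, this group acts on $\cX_1\times_{\ab 1}\cX_1\times_{\ab 1}\cX_1\to\ab 1$ by permuting the three factors and by applying the elliptic involution on each factor independently; compatibly, it acts on the fibre $T/T^{\sigma^{(3)}}=(\C^*)^3$ with coordinates $t_{1,2},t_{1,3},t_{2,3}$ by the induced action, combining the permutation of pairs of indices with the inversions $t_{i,j}\mapsto t_{i,j}^{\epsilon_i\epsilon_j}$.

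The second step is to compute $\cohc{\mathcal T(\sigma^{(3)})}{\pu}$ via the Leray spectral sequence
$$
E_2^{p,q}=\cohc[R^q_!\pi_*\Q]{\cX_1\times_{\ab 1}\cX_1\times_{\ab 1}\cX_1}{p}\Longrightarrow\cohc{\mathcal T(\sigma^{(3)})}{p+q}.
$$
The local systems $R^q_!\pi_*\Q$ are determined by the monodromy coming from the Jacobi-group generators $g_3$: as computed in the preceding discussion, these shift $\tau_{i,j}$ by $\Z$-linear combinations of $\tau_{k,4}$ and $\tau_{4,4}$, so that the torus coordinates $t_{i,j}$ pick up characters factoring through the base elliptic curve. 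The compactly supported cohomology of $\cX_1\times_{\ab 1}\cX_1\times_{\ab 1}\cX_1$ needed as input is itself obtained by a further Leray spectral sequence over $\ab 1$, starting from the elementary cohomology of $\cX_1$ and $\ab 1$.

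The final step is to take $G$-invariants of $\cohc{\mathcal T(\sigma^{(3)})}{\pu}$. The main obstacle is to control the $G$-action precisely: the $\{\pm 1\}^3$ component acts simultaneously by elliptic involutions on the factors of the triple fibre product and by inversions $t_{i,j}\mapsto t_{i,j}^{\pm 1}$ on the fibre, so the invariant subspace is substantially smaller than a naive count based only on the $\s_3$-permutation would suggest; tracking the precise Tate twists and signs of all these actions is the most laborious part of the argument. As a consistency check, since $\beta_3^0(\sigma^{(3)})$ is smooth of complex dimension $7$, the resulting Betti numbers should satisfy Poincar\'e duality and give Euler characteristic zero, as expected for a finite quotient of a torus bundle.
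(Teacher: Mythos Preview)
Your outline correctly identifies the group $G(\sigma^{(3)})\cong\{\pm1\}^3\rtimes\s_3$ and the overall strategy of combining a Leray spectral sequence for the torus bundle with a Leray spectral sequence over $\ab1$. The paper follows essentially the same route, first restricting to a fibre over $[E]\in\ab1$ and then computing the induced local systems on $\ab1$.

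There is, however, a genuine gap in your proposal: you do not say how to compute the differentials of the Leray spectral sequence for the $(\C^*)^3$-bundle. Knowing the monodromy of the local systems $R^q_!\pi_*\Q$ is not enough; for a $\C^*$-bundle the $d_2$-differentials are given by cup product with the first Chern class, and this class is not recovered from the monodromy alone. In the paper this is the heart of the argument: one first shows (Lemma~\ref{zerothstep}) that over each fibre $E\times E\times E$ the bundle $\mathcal T(\sigma^{(3)})$ is isomorphic to $p_{2,3}^*(\cP^0)\oplus p_{1,3}^*(\cP^0)\oplus p_{1,2}^*(\cP^0)$, where $\cP^0$ is the Poincar\'e bundle minus its zero section. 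This makes the Chern classes explicit in terms of the basis $f_1,\dots,f_6$ of $H^1(E^3;\Q)$, and a direct computation then shows that on the $G$-invariant part the differentials $d_2^{2,1}$ and $d_2^{4,1}$ are isomorphisms (Lemma~\ref{firststep}). Without this step the spectral sequence would produce spurious classes in degrees $3,4,5,6$ of the fibre and you would not obtain the stated answer. Your proposal identifies the invariant computation as ``the most laborious part'', but in fact it is this Chern class identification and the resulting differential computation that carries the argument.

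A small remark on your consistency check: since $\beta_3^0(\sigma^{(3)})$ is not compact, Poincar\'e duality relates $H^k_c$ to $H^{14-k}$, not to $H^{14-k}_c$, so there is no reason for the compactly supported Betti numbers themselves to be symmetric (and indeed they are not, being concentrated in degrees $7,9,12,14$). The Euler characteristic check $1-1+1-1=0$ is correct, but is too weak to detect the missing differential.
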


We start by giving an explicit description of the torus bundle $\mathcal T({\sigma^{(3)}})$ defined by the cone ${\sigma^{(3)}}$.

\begin{lem}\label{zerothstep}
Let $q_{\sigma^{(3)}}\co \mathcal T({\sigma^{(3)}})\rightarrow \xb1\times_{\ab1}\xb1\times_{\ab1}\xb1$ be the 
rank $3$ torus bundle associated with ${\sigma^{(3)}}$. 
Then over each fibre $E\times E\times E$ of $\xb1\times_{\ab1} \xb1\times_{\ab1} \xb1$ we have
$$\mathcal T({\sigma^{(3)}})|_{E\times E\times E}\cong p_{2,3}^*(\cP^0)\oplus p_{1,3}^*(\cP^0)\oplus p_{1,2}^*(\cP^0)$$
where $\cP^0$ is the Poincar\'e bundle over the product $E\times E$ with the $0$-section removed, and $p_{i,j}\co E\times E\times E \rightarrow E\times E$ is the projection to the $i$th and $j$th factor. 
\end{lem}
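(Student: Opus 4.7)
The plan is to read off the claimed splitting directly from the explicit coordinate description of the partial quotient $\H_4\to T\times\C^3\times\H_1$ given in \eqref{equ_partquot}. The key observation is that the cone $\sigma^{(3)}=\langle x_1^2,x_2^2,x_3^2\rangle$ is spanned by the three \emph{diagonal} basis vectors $U_{i,i}^*=x_i^2$ of $\Sym^2(\Z^3)$, so the subtorus $T^{\sigma^{(3)}}\subset T$ is the rank-3 subtorus with coordinates $t_{1,1},t_{2,2},t_{3,3}$, and the quotient $T/T^{\sigma^{(3)}}$ is canonically the product of three $\C^*$-factors indexed by the off-diagonal pairs $\{1,2\},\{1,3\},\{2,3\}$ and coordinatized by $t_{1,2},t_{1,3},t_{2,3}$. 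This already gives a preliminary Whitney-sum decomposition
$$
\mathcal T(\sigma^{(3)})|_{E\times E\times E}\cong L_{2,3}\oplus L_{1,3}\oplus L_{1,2},
$$
where $L_{i,j}$ is the $\C^*$-bundle over $E\times E\times E$ whose fibre coordinate is $t_{i,j}$.

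The second step is to show that $L_{i,j}\cong p_{i,j}^*\cP^0$ for each $1\leq i<j\leq 3$. After fixing $\tau_{4,4}$ (and hence the elliptic curve $E$) and taking the partial quotient by $P'(U)$, the coordinates $\tau_{1,4},\tau_{2,4},\tau_{3,4}$ parametrize the three copies of $E$ in $E\times E\times E$. The elements of the residual group $P''(U)$ that act non-trivially on the off-diagonal coordinate $\tau_{i,j}$ are the Jacobi generators $g_3$ of the previous subsection: a direct computation of the action $\tau\mapsto (A\tau+B)D^{-1}$ shows that $\tau_{i,j}$ is shifted by an expression depending only on the $i$-th and $j$-th entries of the parameters $M,N\in\Z^3$ and on $\tau_{i,4},\tau_{j,4}$, with no dependence on $\tau_{k,4}$ for the third index $k$. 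Comparing this with the standard multiplier system for sections of the Poincar\'e bundle over $E\times E$ identifies the cocycle of $L_{i,j}$ with the pullback via $p_{i,j}$ of the Poincar\'e cocycle, so that $L_{i,j}\cong p_{i,j}^*\cP^0$.

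Finally, since the $g_3$-action couples $\tau_{i,j}$ only to $\tau_{i,4},\tau_{j,4}$, the three cocycles for the three different pairs are mutually independent, and the Whitney sum is precisely the one asserted in the lemma. The main obstacle is the bookkeeping of the $g_3$-action on the off-diagonal entries of $\tau$ and the matching of the resulting cocycle with the standard Poincar\'e one; this computation is essentially the same as the one carried out for rank-$1$ degenerations in the discussion preceding Proposition~\ref{cohom_beta1}, but must be performed for each of the three off-diagonal pairs independently.
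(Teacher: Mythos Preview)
Your argument is correct and follows essentially the same route as the paper's proof: both identify the quotient torus $T/T^{\sigma^{(3)}}$ with the three off-diagonal coordinates $t_{i,j}$, compute the action of the Jacobi generators $g_3$ on each $\tau_{i,j}$ (finding that it depends only on $m_i,m_j,\tau_{i,4},\tau_{j,4},\tau_{4,4}$), and match the resulting cocycle against the standard automorphy factor for the Poincar\'e bundle on $E\times E$. One small correction: your cross-reference to ``the discussion preceding Proposition~\ref{cohom_beta1}'' is off---the Poincar\'e bundle does not appear in the torus rank~$1$ section; the relevant precedent is the rank~$2$ discussion of Section~\ref{s:beta_2}, or simply the explicit cocycle \eqref{equ_poincare} that the paper writes down in the present proof.
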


\proof
We first recall the following description of the Poincar\'e bundle over $E \times E$ where $E=\C/(\Z + \Z \tau)$. Consider the action of the group 
$\Z^4$ on the trivial rank-$1$ bundle on $\C \times \C$ given by 
\begin{equation}\label{equ_poincare}
(n_1,n_2,m_1,m_2)\co (z_1,z_2,w) \mapsto 
\end{equation}
$$
(z_1 + n_1 + m_1 \tau, z_2 + n_2 + m_2 \tau, w e^{-2 \pi i (m_1z_2+m_2z_1+m_1m_2\tau)}) 
$$
(where the $z_i$ are the coordinates on the base and $w$ is the fibre coordinate).
We claim that the quotient line bundle on $E\times E$ is the Poincar\'e bundle. For this it is enough to see that this line bundle is 
trivial on $E \times \{ 0 \}$ and  $\{ 0 \} \times E$ (which is obvious) and that it is isomorphic to ${\mathcal O}_E(O - P)$ on $E \times \{ P \}$.
The latter can be checked by comparing the transformation behaviour of (\ref{equ_poincare}) to the transformation behaviour of the 
theta function $\vartheta(z,\tau)$ in one variable (see e.g. \cite[15.1.3.]{la}).

We have to compare this to our situation. In this case we have an action of the group generated by the matrices
$g_3$ with $M,N \in \Z^3$. For $N=(n_1,n_2,n_3)$ we have $\tau_{i,4} \mapsto \tau_{i,4} + n_i$ and for $M=(m_1,m_2,m_3)$ we have
$\tau_{i,j} \mapsto \tau_{i,j}  + m_j\tau_{i,4} + m_i\tau_{j,4} +m_i m_j\tau_{4,4}$ for $1 \leq i,j \leq 3$
and $\tau_{i,4} \mapsto \tau_{i,4} +m_i\tau_{4,4}$.
Recall that the entries $\tau_{i,4}$ for $i=1,2,3$ are 
coordinates on the factors of $E \times E \times E$ and that it follows from (\ref{equ_coordinates}) that 
we can choose $t_{i,j}^{-1}$ with 
$t_{i,j}=e^{2 \pi i \tau_{i,j}}$ for $(i,j)=(1,2),(1,3),(2,3)$ as coordinates on the torus $\mathcal T({\sigma^{(3)}})$.
Comparing this to the transformation (\ref{equ_poincare}) gives the claim.
\qed

\begin{proof}[Proof of Lemma~\ref{coho_sigma3}]
Recall that the stratum ${\beta_3^0}({\sigma^{(3)}})$ is a finite quotient of the rank~$3$ torus bundle $q_{\sigma^{(3)}}\co\mathcal T({\sigma^{(3)}})\rightarrow\xb1\times_{\ab1}\xb1\times_{\ab1}\xb1$.
This enables us to calculate its rational cohomology by exploiting Leray spectral sequences. 

Notice that the base of $q_{\sigma^{(3)}}$ is the total space of the fibration  $p\co \xb1\times_{\ab1}\xb1\times_{\ab1}\xb1\rightarrow \ab1$.
Over a point $[E] \in \ab 1$, the fibre of $p$ is $p^{-1}([E]) \cong E \times E \times E$ and
the fibre of $p \circ q_{\sigma^{(3)}}$ over $[E]$ is the total space of the rank $3$ torus 
bundle $q_{\sigma^{(3)}}|_{E \times E \times E}: \mathcal T(\sigma^{(3)})|_{E \times E \times E} \to E \times E \times E$
described in Lemma~\ref{zerothstep}.
The cohomology of $(p \circ q_{\sigma^{(3)}})^{-1}([E])$ can be computed by the Leray spectral 
sequence associated with this rank $3$ torus bundle:
\begin{multline}\label{lerayfibre}
E_2^{p,q}(q_{\sigma^{(3)}})=\coh{T/T^{({\sigma^{(3)}})}}q\otimes\coh{E\times E\times E}p\\
\Longrightarrow \coh{(p \circ q_{\sigma^{(3)}})^{-1}([E])}{p+q}.
\end{multline}

Note that the cohomology of $E\times E\times E$ (respectively, the torus $T/T^{({\sigma^{(3)}})}$) is an exterior algebra generated by $\coh{E\times E\times E}1$ (resp. $\coh{T/T^{({\sigma^{(3)}})}}1$).
We denote by $Q_1$, $Q_2$ and $Q_3$, respectively, the generators of $\coh{T/T^{{\sigma^{(3)}}}}1\cong\coh{(\C^*)^3}1\cong\Q^3$ defined by integrating along the loop around $0$ defined, respectively, by  $|t^{-1}_{2,3}|=1$, $|t^{-1}_{1,3}|=1$ or $|t^{-1}_{1,2}|=1$. 

We can write 
each copy of $E$ as a quotient $E= \C / (\Z e_{2i-1} + \Z e_{2i}); \, i=1,2,3$. 
Then $e_1,\dots,e_6$ give rise to a basis of the first homology group of $E\times E\times E$. We will denote by $f_1,\dots,f_6$ the elements of the basis of $\coh{E\times E\times E}1$ dual to $e_1,\dots,e_6$. Notice that the transformation behaviour of the $f_{2i-1}$ and of the $f_{2i}$ for $1\leq i\leq 3$ agrees with the transformation behaviour of the coordinates $\{\tau_{i,4}|\;1\leq i\leq3\}$ of $\C^3\cong (\Z e_1 + \Z e_2+\dots+\Z e_6)\otimes_\Z \C$
(and that of the differentials $d\tau_{i,4}$ which give rise to classes in cohomology).

As we are interested in the quotient of $\mathcal T({\sigma^{(3)}})$ by the finite group $G({\sigma^{(3)}})$,
we shall compute the invariant cohomology with respect to this group. This is done in Lemma~\ref{firststep} for the 
invariant cohomology of the fibre $\mathcal T({\sigma^{(3)}})|_{E\times E\times E}=(p \circ q_{\sigma^{(3)}})^{-1}([E])$ using 
a Leray spectral sequence argument. 
It remains to determine the local systems $R^i_!(p \circ q_{\sigma^{(3)}})_*(\Q)$ over $\ab1$ 
defined by the fibration $p \circ q_{\sigma^{(3)}}\co{\beta_3^0}({\sigma^{(3)}})=\mathcal T({\sigma^{(3)}})\rightarrow \ab1$. 
This is quite straightforward, since the cohomology with compact support of the fibre is constant in 
degrees $12$ and $10$, and since $\Sym^2\coh E1 $ induces the symplectic local system $\V_{2}$ on $\ab1$.

Recall that the cohomology with compact support of $\ab1$ with constant coefficients is concentrated in degree $2$, and that 
the only non-trivial cohomology group of $\ab1$ with coefficients in $\V_{2}$ is 
$H^1_c(\ab1;\V_{2})=\Q$ (see e.g. \cite[Thm.~5.3]{G-M1n}). 
In particular, it then follows from Lemma~\ref{firststep} that the Leray spectral sequence associated with 
$p \circ q_{\sigma^{(3)}}$ has only two columns 
containing non-trivial $E_2$ terms, so it has to degenerate at $E_2$. From this the claim follows.
\end{proof}

\begin{lem}\label{firststep}
For every $[E]\in\ab1$, the rational cohomology with compact support of the fibre of 
${\beta_3^0}({\sigma^{(3)}})\rightarrow \ab1$, with its Hodge structures, coincides with the $G({\sigma^{(3)}})$-invariant part of the cohomology with compact support of the rank $3$ torus bundle 
$\mathcal T({\sigma^{(3)}})|_{E\times E\times E}$ and is given by
$$\left(\cohc{\mathcal T({\sigma^{(3)}})|_{E\times E\times E}}k\right)^{G({\sigma^{(3)}})}=
\left\{
\begin{array}{ll}
\Q(-6) & k=12,\\
\Q(-5) & k=10,\\
\Sym^2(\coh E1)\otimes\Q(-2)&k=8,\\
\Sym^2(\coh E1)\otimes\Q(-1)&k=6,\\
0&\text{otherwise.}
\end{array}
\right.
$$
\end{lem}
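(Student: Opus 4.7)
The first part of the claim, identifying the fibre cohomology with $G(\sigma^{(3)})$-invariants, is a standard consequence of the fact that $\beta_3^0(\sigma^{(3)})$ is a finite $G(\sigma^{(3)})$-quotient of the torus bundle $\mathcal T(\sigma^{(3)})$: over the rationals, the cohomology with compact support of the quotient equals the invariants of the cohomology of the cover.

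The plan is to compute $\cohc{\mathcal T(\sigma^{(3)})|_{E^3}}{\bullet}$ by the Leray spectral sequence of the bundle $\pi\co\mathcal T(\sigma^{(3)})|_{E^3}\to E^3$. Since $E^3$ is proper and the bundle is complex-oriented (being a Whitney sum of line bundles minus their zero sections), the local systems $R^q\pi_!(\Q)$ are constant, so
$$
E_2^{p,q}=\coh{E^3}{p}\otimes\cohc{(\C^*)^3}{q}.
$$
K\"unneth gives $\cohc{(\C^*)^3}{q}=\Q,\ \Q(-1)^{\oplus3},\ \Q(-2)^{\oplus3},\ \Q(-3)$ for $q=3,4,5,6$ respectively. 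The $d_2$-differential is the Koszul derivation sending each generator $\beta_{ij}\in\cohc{\C^*_{ij}}{2}$ to $c_{ij}\otimes\alpha_{ij}$, where $c_{ij}=p_{ij}^*c_1(\mathcal P)\in\coh{E^3}{2}$ is the pulled-back Chern class of the Poincar\'e bundle with the projections from Lemma~\ref{zerothstep}, and $\alpha_{ij}\in\cohc{\C^*_{ij}}{1}$ is the companion generator.

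The next step is to identify $G(\sigma^{(3)})$ and take invariants. The cone $\sigma^{(3)}$ is stabilized in $\GL(3,\Z)$ exactly by the signed permutation matrices, so $G(\sigma^{(3)})\cong(\Z/2)^3\rtimes\s_3$. The $\s_3$ factor permutes the three $E$-factors and the three $\C^*$-factors simultaneously, while the $i$-th generator $\epsilon_i$ of $(\Z/2)^3$ acts as the Kummer involution on $E_i$ and inverts those $\C^*_{jk}$ with $i\in\{j,k\}$. The crucial subtlety in the bookkeeping is that a transposition $(ij)\in\s_3$ acts on $V_i\wedge V_j\subset\coh{E^3}{2}$ as \emph{minus} the natural swap on $V\otimes V$, so whether $\Sym^2\coh{E}{1}$ or $\bigwedge^2\coh{E}{1}=\Q(-1)$ appears in the invariant subspace depends on the sign by which the fibre class is permuted. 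A direct computation gives that the $G$-invariant part of $E_2^{p,q}$ is nontrivial only in bidegrees $(2,4),(4,4),(2,5),(4,5)$ with Hodge structures $\Sym^2\coh{E}{1}(-1)$, $\Sym^2\coh{E}{1}(-2)$, $\Q(-3)$, $\Q(-4)$, together with the row $q=6$ contributing $\Q(-3),\Q(-4),\Q(-5),\Q(-6)$ at $p=0,2,4,6$. The row $q=3$ contributes nothing, because the fibre class $\alpha_{12}\alpha_{13}\alpha_{23}$ transforms by the sign representation of $\s_3$ while the $(\Z/2)^3$-invariant part of $\coh{E^3}{\bullet}$ lies in the trivial representation.

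The main obstacle is verifying that $d_2$ kills exactly the right summands. The key check is to show that $d_2\co(0,6)^G\to(2,5)^G$ and $d_2\co(2,6)^G\to(4,5)^G$ are both isomorphisms of one-dimensional Hodge structures. This uses the identity $\omega_i\cdot c_{ij}=0$ in $\coh{E^3}{4}$, valid because $c_{ij}\in V_i\wedge V_j$ and $V_i=\coh{E_i}{1}$ is only two-dimensional, so that $(\omega_1+\omega_2+\omega_3)\cdot c_{ij}=\omega_k\cdot c_{ij}$ for $\{k\}=\{1,2,3\}\setminus\{i,j\}$, and a direct check confirms that the resulting symmetric class is nonzero in the one-dimensional target. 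From $d_2\circ d_2=0$ it then follows automatically that $d_2\co(2,5)^G\to(4,4)^G$ vanishes, so $(4,4)^G=\Sym^2\coh{E}{1}(-2)$ survives intact; higher differentials $d_r$ with $r\geq3$ all vanish on invariants by inspection of target bidegrees. The resulting $E_\infty$ on invariants thus gives exactly $\Sym^2\coh{E}{1}(-1)$, $\Sym^2\coh{E}{1}(-2)$, $\Q(-5)$, $\Q(-6)$ in degrees $6,8,10,12$, matching the claimed formula.
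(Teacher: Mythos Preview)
Your proof is correct and proceeds along essentially the same lines as the paper's. The only difference is that you compute the Leray spectral sequence directly in compactly supported cohomology (so the fibre contributes $H^q_c((\C^*)^3)$ in degrees $q=3,\dots,6$), whereas the paper works in ordinary cohomology (fibre degrees $q=0,\dots,3$) and then applies Poincar\'e duality at the end. The two computations are Poincar\'e dual to one another: your differentials $d_2\co(0,6)^G\to(2,5)^G$ and $d_2\co(2,6)^G\to(4,5)^G$ correspond to the paper's $d_2\co(4,1)\to(6,0)$ and $d_2\co(2,1)\to(4,0)$, and in both cases the nonvanishing is checked via the Chern classes $c_{ij}=p_{ij}^*c_1(\mathcal P)$ of the Poincar\'e bundle. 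Your observation that the sign by which the stabilising transposition acts on the fibre class determines whether $\Sym^2 H^1(E)$ or $\bigwedge^2 H^1(E)\cong\Q(-1)$ appears is exactly the mechanism underlying the paper's explicit table of invariant generators.
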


\proof
The stabilizer $G({\sigma^{(3)}})$ of $\sigma^{(3)}$ in $\GL(3,\Z)$ is an extension of the symmetric 
group $\s_3$ (permuting the coordinates $x_1,x_2,x_3$) by $(\Z/2\Z)^3$ (acting by  
involutions $(x_1,x_2,x_3,x_4)\mapsto(\pm x_1,\pm x_2,\pm x_3, x_4)$).

The interchange of two coordinates (say, $x_i$ and $x_j$) acts on $\coh{E\times E\times E}1$ by interchanging $f_{2i-1}$ with $f_{2j-1}$, $f_{2i}$ with $f_{2j}$ and leaving all other generators invariant. The action on $\coh{T/T^{({\sigma^{(3)}})}}1$ interchanges $Q_i$ and $Q_j$ and leaves the third generator invariant.

The automorphism mapping $x_i$ to $-x_i$ acts on $\coh{E\times E\times E}1$ as multiplication by $-1$ on the generators $f_{2i-1}, f_{2i}$ and on $\coh{T/T^{({\sigma^{(3)}})}}1$ as multiplication by $-1$ on $Q_k$ with $k\neq i$. All other generators are invariant.

We can compute the $G({\sigma^{(3)}})$-invariant part of the rational cohomology of the rank $3$ torus bundle $\mathcal T({\sigma^{(3)}})|_{E\times E\times E}$ by restricting to the $G({\sigma^{(3)}})$-invariant part of the Leray spectral sequence \eqref{lerayfibre} associated with $q_{\sigma^{(3)}}$. This yields a spectral sequence $E_2^{p,q}$ converging to the $G({\sigma^{(3)}})$-invariant part of $\coh{\mathcal T({\sigma^{(3)}})|_{E\times E\times E}}{p+q}$. 

A computation of the part of the tensor product $\bigwedge^\pu \coh{E\times E\times E}1\otimes \bigwedge^\pu\coh{T/T^{({\sigma^{(3)}})}}1$ which is invariant under $G({\sigma^{(3)}})$ yields that $E_2^{p,q}$ is non-zero only for $(p,q)\in\{(2,0),(2,1),(4,0),((2,2),(4,1),(6,0)\}$. A precise description of the generators of the non-trivial $E_2$ terms is given in Table~\ref{invariants_sigma3}. Note that the spaces for $p=q=2$ and $p=4,q=2$ are both isomorphic to $\Sym^2\coh E1 $ as $\Sp(2,\Q)$-representations. 

\begin{table}
$$
\begin{array}{cccl}
p&q&\text{dim.}&\text{generators}\\[6pt]
0&0 & 1 & 1\\[6pt]
2&0 & 1 & \sum_{i}f_{2i-1}\wedge f_{2i}\\[6pt]
2&1 & 1 & \sum\limits_{{i< j, k\neq i,j}} Q_k\otimes (f_{2i-1}\wedge f_{2j}-f_{2i}\wedge f_{2j-1})\\[6pt]
2&2  & 3  &\sum\limits_{{i< j, k\neq i,j}}Q_i\wedge Q_j\otimes W_k^{(m)},\ m=1,2,3\\[6pt]
4&0 & 1 & \sum_{i< j}f_{2i-1}\wedge f_{2i}\wedge f_{2j-1}\wedge f_{2j}\\[6pt]
4&1 & 1 & \sum\limits_{{i< j, k\neq i,j}}Q_k\otimes (f_{2i-1}\wedge f_{2j}-f_{2i}\wedge f_{2j-1})\wedge f_{2k-1}\wedge f_{2k} \\[6pt]
4&2  & 3  &\sum\limits_{{i< j, k\neq i,j}}Q_i\wedge Q_j\otimes W_k^{(m)}\wedge f_{2k-1}\wedge f_{2k},\ m=1,2,3\\[6pt]
6&0 & 1 & f_1\wedge f_2\wedge f_3\wedge f_4\wedge f_5\wedge f_6.
\end{array}
$$
\begin{tabular}{m{11cm}}
{\small All indices $i,j,k$ are between $1$ and $3$.  For indices $i<j$ we set $W_k^{(1)} = f_{2i-1}\wedge f_{2j}+f_{2i}\wedge f_{2j-1}$, 
$W_k^{(2)} = f_{2i-1}\wedge f_{2j-1}$
and  
$W_k^{(3)} = f_{2i}\wedge f_{2j}$
for $k\neq i,j$.}
\vspace{5pt}
\end{tabular}
\caption{Description of the generators of the $E_2$ terms of the $G({\sigma^{(3)}})$-invariant part of the spectral sequence associated with $q_{\sigma^{(3)}}$. \label{invariants_sigma3}}
\end{table}

Next, one investigates the differentials of the spectral sequence. 
As differentials have to occur between $E_2^{p,q}$ terms such that the two $p+q$ have different parity, an inspection of the spectral sequence quickly reveals that all differentials have to be trivial, with the possible exception of
\begin{equation}
d_2^{2,1}\co E_2^{2,1}\rightarrow E_2^{4,0}\end{equation}
and
\begin{equation}
d_2^{4,1}\co E_2^{4,1}\rightarrow E_2^{6,0}.\end{equation}

We can determine their rank by exploiting the description of the restriction to $E\times E\times E$ of the torus bundle $\mathcal T({\sigma^{(3)}})$ given in Lemma~\ref{zerothstep} as a direct sum of pull-backs of the Poincar\'e bundle with the $0$-section removed. 
This description implies that one can employ the usual description of $d_2$ 
differentials of $\C^*$-bundles to investigate $d_2^{2,1}$ and $d_2^{4,1}$. 
In particular, each of these differentials is given by formally replacing each 
generator $Q_k$ of $\coh{T/T^{\sigma^{(3)}}}1$ by the first Chern class of the bundle $p_{i,j}^*(\cP)$, 
where $1\leq i<j\leq 3$ are chosen such that  $\{i,j,k\}=\{1,2,3\}$. 
Recall that on the product $E \times E$ the Poincar\'e bundle 
$\cP \cong \cO_{E \times E}(E \times \{0\} +  \{0\} \times E - \Delta)$, where $\Delta$ is the diagonal.
From this one concludes that $c_1(\cP)=f_1 \wedge f_2 + f_3 \wedge f_4 - (f_1 + f_3) \wedge (f_2 + f_4)=
f_2 \wedge f_3 - f_1 \wedge f_4$.
It is then a straightforward calculation to prove that both differentials are isomorphisms.

It remains to pass from cohomology to cohomology with compact support, which we can do by Poincar\'e duality, 
using the fact that  $\mathcal T({\sigma^{(3)}})|_{E\times E\times E}$ is smooth of complex dimension $6$. 
Finally, we can identify the $G({\sigma^{(3)}})$-invariant part of the cohomology with compact support of $\mathcal T({\sigma^{(3)}})|_{E\times E\times E}$ with the cohomology with compact support of its finite quotient $\left(\mathcal T({\sigma^{(3)}})|_{E\times E\times E}\right)/G({\sigma^{(3)}})$, which coincides with the fibre of ${\beta_3^0}({\sigma^{(3)}})\rightarrow \ab1$ over $[E]$.
\qed

\subsection{The cohomology of $\beta_3^0(\sigma_I^{(4)})$}\label{coho_1}

In this section we will prove

\begin{lem}\label{lem_coho_1}
The rational cohomology groups with compact support of $\beta_3^0(\sigma_I^{(4)})$ are given by
$$
H_c^k(\beta_3^0(\sigma_I^{(4)});\Q)=\left\{\begin{array}{ll}
\Q(-6) & k=12\\
\Q(-5)^{\oplus2} & k=10\\
\Q(-4) + \Q(-2) & k=8\\
\Q & k=5\\
0 & \text{otherwise}.\\
\end{array}\right.
$$
\end{lem}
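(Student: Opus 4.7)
The plan is to adapt the strategy of the proof of Lemma~\ref{coho_sigma3} to the rank 2 torus bundle associated with the cone $\sigma_I^{(4)}=\langle x_1^2,x_2^2,x_3^2,(x_2-x_3)^2\rangle$. First I would identify the stabilizer $G(\sigma_I^{(4)})\subset\GL(3,\Z)$: since the ray $x_1^2$ decouples from the $A_2$-type configuration of rays $\{x_2^2,x_3^2,(x_2-x_3)^2\}$ in $\Sym^2(\Z^2)$, this stabilizer is generated by $x_1\mapsto -x_1$, the joint flip $(x_2,x_3)\mapsto(-x_2,-x_3)$, the transposition $x_2\leftrightarrow x_3$ and the involution $x_2\mapsto x_2,\ x_3\mapsto x_2-x_3$, yielding a group of order $24$ of the shape $(\Z/2)^2\times\s_3$.

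Next, in analogy with Lemma~\ref{zerothstep}, I would give an explicit description of the rank 2 torus bundle $q_{\sigma_I^{(4)}}\co \mathcal T(\sigma_I^{(4)})\to \cX_1\times_{\ab 1}\cX_1\times_{\ab 1}\cX_1$, verifying that over each fibre $E\times E\times E$ it decomposes as a direct sum of two Poincar\'e-type $\C^*$-bundles pulled back from the appropriate projections $E^3\to E^2$ determined by the basis of $T/T^{\sigma_I^{(4)}}$ dual to the generators of $\sigma_I^{(4)}$. From this description I would compute the $G(\sigma_I^{(4)})$-invariant part of the Leray spectral sequence of $q_{\sigma_I^{(4)}}|_{E^3}$; as in the proof of Lemma~\ref{firststep}, the only non-trivial differentials are the $d_2$'s, given by cup product with the first Chern classes of the two Poincar\'e bundles. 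Poincar\'e duality then converts the resulting cohomology into the cohomology with compact support of the fibre of $\beta_3^0(\sigma_I^{(4)})\to\ab 1$ and identifies the local systems on $\ab 1$ appearing in the Leray spectral sequence over $\ab 1$.

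Finally, I would feed this data into the Leray spectral sequence for $\beta_3^0(\sigma_I^{(4)})\to\ab 1$, using $\cohc{\ab 1}{2}=\Q(-1)$ together with $\cohc[\V_2]{\ab 1}{1}=\Q$ as recalled in \S\ref{sigma3}. The pure top-weight classes in degrees $12$ and $10$ and the summand $\Q(-4)$ in degree $8$ should come from the constant local-system contributions; by contrast, the exotic summand $\Q(-2)$ in $\cohc{\beta_3^0(\sigma_I^{(4)})}{8}$ and the weight-zero class in $\cohc{\beta_3^0(\sigma_I^{(4)})}{5}$ should arise from the factor $\Sym^2\coh{E}{1}$ in the fibre cohomology, which induces the local system $\V_2$ on $\ab 1$. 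The main obstacle will be to check that these exotic-weight classes survive all differentials of both spectral sequences: as in Lemma~\ref{coho_sigma3}, most differentials should vanish by weight-parity considerations for morphisms of pure Hodge structures, while the remaining $d_2$'s reduce to explicit Chern-class intersection calculations on $E\times E$ using $c_1(\cP)=f_2\wedge f_3-f_1\wedge f_4$.
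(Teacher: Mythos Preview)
Your proposal is correct and follows essentially the same route as the paper: identify $G(\sigma_I^{(4)})$, realize the rank~$2$ torus bundle over $E^3$ as $p_{1,3}^*(\cP^0)\oplus p_{1,2}^*(\cP^0)$, compute the $G(\sigma_I^{(4)})$-invariant Leray spectral sequence with the $d_2$'s given by the Chern classes of the Poincar\'e bundles, and then run the Leray spectral sequence over $\ab1$ using $\cohc[\V_2]{\ab1}1=\Q$ for the exotic-weight contributions. The paper carries this out in Lemma~\ref{lem:invariantcohomologysigma_I^4} and Table~\ref{t:rank3_sigma_I}, and your identification of the source of the $\Q(-2)$ in degree~$8$ and the $\Q$ in degree~$5$ matches exactly.
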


\begin{proof}
We shall make again use of the twofold fibre structure of this stratum. The 
stratum $\beta_3^0(\sigma_I^{(4)})$ is a finite quotient of a rank $2$ torus 
bundle $q_{\sigma_I^{(4)}}\co \mathcal T(\sigma_I^{(4)}) \to \cX_1 \times_{\ab 1} \cX_1 \times_{\ab 1} \cX_1$ 
with fibres isomorphic to
$T/T^{(\sigma_I^{(4)})}$. 
Note that the generators of $\sigma_I^{(4)}$ correspond to the first four generators of
the cone $\sigma^{(6)}$. Comparing this to the embedding described in (\ref{equ_coordinates}) we find that we can choose 
$t^{-1}_{1,3},t^{-1}_{1,2}$ as coordinates on $T/T^{(\sigma_I^{(4)})}$. As before we 
denote $p\co \cX_1 \times_{\ab 1} \cX_1 \times_{\ab 1} \cX_1 \to \ab 1$.

As we are interested in the quotient of $\mathcal T(\sigma_I^{(4)})$ by the finite group $G(\sigma_I^{(4)})$,
we shall compute the invariant cohomology with respect to this group. 
Thus we first have to describe the automorphism group 
$G(\sigma_I^{(4)})$ of the cone $\sigma_I^{(4)}$, i.e. all elements of the 
form $g_3 \in \GL(3,\Z)$ which
fix this cone. We have already discussed this in \cite[Section 3]{HT}. The result is that the automorphism group is 
generated by the following
four transformations:
\begin{equation}\label{equ_trafo1}
x_1 \mapsto x_1, \quad x_2 \mapsto x_2 - x_3, \quad x_3 \mapsto -x_3
\end{equation}
\begin{equation}\label{equ_trafo2}
x_1 \mapsto -x_1, \quad x_2,x_3 \mapsto x_2,x_3
\end{equation}
\begin{equation}\label{equ_trafo3}
x_1 \mapsto x_1, \quad x_2 \leftrightarrow x_3. 
\end{equation}
\begin{equation}\label{equ_trafo4}
x_i \mapsto -x_i; \quad i=1,2,3.
\end{equation}

Note that these automorphisms act trivially on the base of the fibration $\cX_1 \times_{\ab 1} \cX_1 \times_{\ab 1} \cX_1 \to \ab 1$. 

Again we shall determine the invariant 
cohomology of the fibre $(q_{\sigma_I^{(4)}} \circ p)^{-1}([E])$ using the Leray spectral sequence
with terms $E_2^{p,q}=H^q(T/T^{(\sigma_I^{(4)})},\Q) \otimes H^p(E \times E \times E,\Q)$. The result is given by:
\begin{lem}\label{lem:invariantcohomologysigma_I^4}
For every $[E]\in\ab1$, the rational cohomology with compact support of the fibre of 
${\beta_3^0}({\sigma^{(4)}_I})\rightarrow \ab1$, with its Hodge structures, is given by
$$\left(\cohc{\mathcal T({\sigma_I^{(4)}})|_{E\times E\times E}}k\right)^{G({\sigma_I^{(4)}})}=
\left\{
\begin{array}{ll}
\Q(-5) & k=10,\\
\Q(-4)^{\oplus2} & k=8\\
\Sym^2(\coh E1)\otimes\Q(-2)&k=7,\\
\Q(-3) & k=6\\
\Sym^2(\coh E1)&k=4,\\
0&\text{otherwise.}
\end{array}
\right.
$$
\end{lem}
\begin{proof}
We denote %
the generators of $H^1(T/T^{(\sigma_I^{(4)})};\Q) \cong H^1((\C^*)^2;\Q) \cong \Q^2$ corresponding to $t^{-1}_{1,3},t^{-1}_{1,2}$
by $Q_2,Q_3$. The %
$f_i, i=1, \ldots , 6$ are, as before, a basis of the cohomology of the triple 
product $E \times E \times E$.

We must now compute the action on (co)homology of the automorphisms of  $\sigma_I^{(4)}$. 
As a non-trivial example 
we shall do this in detail
in the case of the transformation given in (\ref{equ_trafo1}), the computations in the other cases are analogous. 

The action of this transformation on Siegel space is given by:
$$
\begin{pmatrix}
1 & 0 & 0 & 0\\
0 & 1 & 0 & 0\\
0 & -1 & -1 & 0\\
0 & 0 & 0 & 1
\end{pmatrix}
\begin{pmatrix}
\tau_{1,1}&\tau_{1,2}&\tau_{1,3}&\tau_{1,4}\\
\tau_{1,2}&\tau_{2,2}&\tau_{2,3}&\tau_{2,4}\\
\tau_{1,3}&\tau_{2,3}&\tau_{3,3}&\tau_{3,4}\\
\tau_{1,4}&\tau_{2,4}&\tau_{3,4}&\tau_{4,4}
\end{pmatrix}
\begin{pmatrix}
1 & 0 & 0 & 0\\
0 & 1 & -1 & 0\\
0 & 0 & -1 & 0\\
0 & 0 & 0 & 1
\end{pmatrix} =
$$
$$
= \begin{pmatrix}
\tau_{1,1}&\tau_{1,2}& -\tau_{1,2}-\tau_{1,3}&\tau_{1,4}\\
\tau_{1,2}&\tau_{2,2}& -\tau_{2,2} - \tau_{2,3}&\tau_{2,4}\\
-\tau_{1,2}-\tau_{1,3}&-\tau_{2,2} - \tau_{2,3} &\tau_{2,2} + 2 \tau_{2,3} + \tau_{3,3}& - \tau_{2,4} -\tau_{3,4}\\
\tau_{1,4}&\tau_{2,4}&- \tau_{2,4} -\tau_{3,4}&\tau_{4,4}
\end{pmatrix}.
$$
From this we conclude that under this transformation:
\begin{equation}\label{equ_trafo1'}
Q_2 \mapsto -Q_2 - Q_3; \quad Q_3 \mapsto Q_3; 
\end{equation}
\begin{equation*}
f_i \mapsto f_i, \quad i=1, \ldots ,4 \quad f_i \mapsto -f_{i-2} - f_{i}; \quad i=5,6,
\end{equation*}
Note that the latter coincides with the transformation behaviour of the differentials $d\tau_{i,4}, i=1,2,3$, and the former with the transformation behaviour of $-\tau_{1,3},-\tau_{1,2}$.

An analogous computation for the other automorphisms gives the following results: 
\begin{equation}\label{equ_trafo2'}
Q_2, Q_3 \mapsto -Q_2, -Q_3;  
\end{equation}
\begin{equation*}
f_1, f_2 \mapsto -f_1, -f_2, \quad f_i \mapsto f_i, \quad i=3,\ldots,6. 
\end{equation*}
\begin{equation}\label{equ_trafo3'}
Q_2, Q_3 \mapsto Q_3, Q_2;  
\end{equation}
\begin{equation*}
f_1, f_2 \mapsto f_1, f_2, \quad f_3 \leftrightarrow f_5, \quad f_4 \leftrightarrow f_6, 
\end{equation*}
\begin{equation}\label{equ_trafo4'}
Q_2, Q_3 \mapsto Q_2, Q_3;  
\end{equation}
\begin{equation*}
f_i \mapsto -f_i, \quad i=1,\ldots,6. 
\end{equation*}

Now we must compute the invariant cohomology with respect to $G(\sigma_I^{(4)})$. This can either be done by a (lengthy)
computation by hand or a standard computer algebra system. 

The $E^{p,0}_2$ terms of the spectral sequence can be computed as follows. 
The invariant part $E^{2,0}_2$ of the cohomology group $\coh{T/T^{\sigma^{(4)}_I}}0\otimes\coh{E\times E\times E}2$ is two-dimensional and generated by the tensors
$$
I_1=f_1 \wedge f_2, \ 
I_2=2(f_3 \wedge f_4 + f_5 \wedge f_6) + (f_3 \wedge f_6 + f_5 \wedge f_4).
$$
The term $E^{4,0}_2$ is also two-dimensional, with generators $I_1\wedge I_2$ and $I_2\wedge I_2$. The terms $E^{0,0}_2\cong\coh{T/T^{\sigma^{(4)}_I}}0\otimes\coh{E\times E\times E}0$ and $E^{6,0}_2\cong\coh{T/T^{\sigma^{(4)}_I}}0\otimes\coh{E\times E\times E}6$ are one-dimensional and generated by fundamental classes.  

The term $E^{2,1}_2$, which is the invariant part of $\coh{T/T^{\sigma^{(4)}_I}}1\otimes\coh{E\times E\times E}2$ is $4$-dimensional, with generators
$$
g_{i,j} = \left((Q_2+2Q_3)\otimes f_{j+2}+(2Q_2+Q_3)\otimes f_{j+4}\right)\wedge f_i,\ i,j=1,2.
$$
In particular, it is isomorphic to $\coh E 1\otimes\coh E1 = \Sym^2(\coh E1)\oplus\bigwedge^2\coh E1$. The term $E^{4,1}_2$ is also four-dimensional and generated by $(g_{i,j}\wedge I_2)$. All other $E^{p,1}_2$ vanish.

Finally, the only non-trivial terms of the form $E^{p,2}_2$ are those with $p=2$ and $p=4$. The subspace $E^{2,2}_2\subset \coh{T/T^{\sigma^{(4)}_I}}2\otimes\coh{E\times E\times E}2$ is isomorphic to $\Sym^2(\coh E1)$ and is generated by the invariant tensors
$$
Q_2\wedge Q_3 \otimes f_3 \wedge f_5, \ 
Q_2\wedge Q_3 \otimes (f_3 \wedge f_6 + f_4 \wedge f_5), \ 
Q_2 \wedge Q_3 \otimes f_4 \wedge f_6.
$$
Finally, the subspace $E^{4,2}_2$ is $4$-dimensional and equal to  $E^{2,2}_2\wedge I_1$.

In terms of local systems this gives rise to the Table \ref{t:rank3_sigma_I}.
\begin{table}\caption{\label{t:rank3_sigma_I} $E_2$ term of the spectral sequence converging to the cohomology of $\beta_3^0(\sigma_{I}^{(4)})$}
$$
\begin{array}{r|cccccccc}
q&&&&&&&&\\[6pt]
2&0&0&\V_{2}(-2)&0&\V_{2}(-3)&0&0&\\
1&0&0&\V_{2}(-1) \oplus \Q(-2)&0&\V_{2}(-2)\oplus \Q(-3)&0&0&\\
0&\Q&0&\Q(-1)^{\oplus2}&0&\Q(-2)^{\oplus2}&0&\Q(-3)&
\\\hline
& 0&1&2&3&4&5&6&p
\end{array}
$$
\end{table}
We claim that that the differentials $d^{p,q}_2: E_2^{p,q} \to E_2^{p+2,q-1}$ for $(p,q) =(2,1), (2,2)$ and $(4,1)$ are
of maximal rank. Indeed, by Schur's lemma it is enough to prove that they are non-zero. To check this it is enough to recall
that the torus bundle is isomorphic to $p_{1,3}^*(\cP^0)\oplus p_{1,2}^*(\cP^0)$. In particular, for every class $\alpha\in E^{p,1}_2$ we obtain $d_2^{p,1}(\alpha)$ by replacing $Q_2$ with $c_1(p_{1,3}^*(\cP))=-(f_1\wedge f_6+f_5\wedge f_2)$ and $Q_3$ with $c_1(p_{1,3}^*(\cP))=-(f_1\wedge f_4+f_3\wedge f_2)$ in the expression of $\alpha$. Analogously, for every class $\beta\in E^{2,2}_2$ we get $d_2^{2,2}(\beta)$ by replacing $Q_2\wedge Q_3$ with $Q_2\otimes c_1(p_{1,2}^*(\cP)) - Q_3\otimes c_1(p_{1,3}^*(\cP))$.
Then the claim follows from a straightforward calculation.
\end{proof}

To complete the proof of Lemma \ref{lem_coho_1} is now an easy consequence of 
the Leray spectral sequence of the fibration 
$p \circ q_{\sigma_I^{(4)}}: \mathcal T(\sigma_I^{(4)}) \to \ab 1$. Looking at the weights of the Hodge 
structures, we see immediately that all differentials must vanish and thus the result follows.
\end{proof}

\subsection{The cohomology of $\beta_3^0(\sigma_{II}^{(4)})$}

Before we can describe the cohomology of this stratum we must identify the toric bundle $\mathcal T{(\sigma_{II}^{(4)})}$.
\begin{lem}\label{lem:torussigma_II^4}
Let $p_{1,2} : E \times E \times E \to E \times E$ be the projection onto the first two factors and let 
$q: E \times E \times E \to E \times E$ be the map given by $q(x,y,z)=(x+y+z,z)$. Then 
$$
\mathcal T({\sigma_{II}^{(4)}})|_{E\times E\times E}\cong p_{1,2}^*(\cP^0) \oplus q^*(({\cP^{-1})^0}).
$$
where $\cP^0$ is the Poincar\'e bundle over the product $E\times E$ with the $0$-section removed. 
\end{lem}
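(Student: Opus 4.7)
The strategy is to pin down $\mathcal{T}(\sigma_{II}^{(4)})|_{E \times E \times E}$ explicitly in the toric coordinates introduced above and then match the resulting transition functions with the explicit description~(\ref{equ_poincare}) of the Poincaré bundle.

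First, exhibit a basis of the character lattice of the quotient torus $T/T^{(\sigma_{II}^{(4)})}$. In terms of the dual basis $t_{i,j}$ of $U_{i,j}^* = (2-\delta_{ij})x_ix_j$, the annihilator inside $\Sym^2(\Z^3)^*$ of the four generators $x_1^2$, $x_2^2$, $(x_2-x_3)^2$, $(x_1-x_3)^2$ of $\sigma_{II}^{(4)}$ is cut out by the linear equations $a_{1,1}=a_{2,2}=0$ and $a_{3,3}=a_{1,3}=a_{2,3}$. A $\Z$-basis is therefore given by $\chi_1 = t_{1,3}+t_{2,3}+t_{3,3}$ and $\chi_2 = t_{1,2}$; since $\sigma_{II}^{(4)}$ is basic, these two characters exhibit $\mathcal{T}(\sigma_{II}^{(4)})|_{E\times E\times E}$ as a fibrewise Whitney sum of the two corresponding $\C^*$-bundles over $E \times E \times E$.

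Next, compute how these characters, viewed as multiplicative functions $e^{2\pi i(\tau_{1,3}+\tau_{2,3}+\tau_{3,3})}$ and $e^{2\pi i\tau_{1,2}}$ on the partial quotient $X(U)$, transform under the generators of $P''(U)$. Only the Heisenberg-type generators $g_3$ contribute non-trivially, via $\tau_{i,j}\mapsto \tau_{i,j}+m_j\tau_{i,4}+m_i\tau_{j,4}+m_im_j\tau_{4,4}$ for $1\le i,j\le 3$; substituting, one finds
\begin{align*}
\chi_2 &\mapsto \chi_2\cdot\exp\!\bigl(2\pi i(m_2\tau_{1,4}+m_1\tau_{2,4}+m_1m_2\tau_{4,4})\bigr),\\
\chi_1 &\mapsto \chi_1\cdot\exp\!\bigl(2\pi i[m_3\tau_{1,4}+m_3\tau_{2,4}+(m_1+m_2+2m_3)\tau_{3,4}+m_3(m_1+m_2+m_3)\tau_{4,4}]\bigr),
\end{align*}
while the generators $g_2$ and the lattice translations by $N$ act trivially on $\chi_1$ and $\chi_2$.

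Finally, match these transition data with those of~(\ref{equ_poincare}). For $\chi_2$, the substitution $(z_1,z_2)=(\tau_{1,4},\tau_{2,4})$ with lattice parameters $(m_1,m_2)$ is exactly the pull-back of the Poincaré data under $p_{1,2}(x,y,z)=(x,y)$, identifying this factor with $p_{1,2}^*(\cP^0)$. For $\chi_1$, setting $(z_1,z_2) = (\tau_{1,4}+\tau_{2,4}+\tau_{3,4},\tau_{3,4})$ with lattice parameters $(m_1+m_2+m_3,m_3)$ reproduces~(\ref{equ_poincare}) up to inversion of the fiber coordinate; this identifies the second factor with the pull-back of $\cP^{-1}$ under $q(x,y,z)=(x+y+z,z)$, i.e.\ with $q^*((\cP^{-1})^0)$. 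The hard part is the sign bookkeeping in this final comparison: the asymmetric appearance of $\cP$ versus $\cP^{-1}$ in the statement reflects specific sign choices in the generators $\chi_1,\chi_2$ of the character lattice, and one has to align these with the fixed sign convention of~(\ref{equ_poincare}) in order to identify the correct Poincaré bundle for each factor.
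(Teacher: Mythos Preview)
Your approach is essentially identical to the paper's: identify a basis for the character lattice of $T/T^{(\sigma_{II}^{(4)})}$ (the paper uses the coordinates $T_6=t_{1,2}^{-1}$ and $T_3=t_{3,3}t_{1,3}t_{2,3}$ coming directly from~(\ref{equ_coordinates}), while you compute the same lattice as the annihilator of the cone), compute the transformation of these characters under the generators $g_3$, and compare with the automorphy factor~(\ref{equ_poincare}) of the Poincar\'e bundle.

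One small slip in the final comparison: with your choice $\chi_2=t_{1,2}$, the transformation $\chi_2\mapsto\chi_2\cdot e^{2\pi i(m_2\tau_{1,4}+m_1\tau_{2,4}+m_1m_2\tau_{4,4})}$ differs from~(\ref{equ_poincare}) by the sign of the exponent, so $\chi_2$ actually matches $(\cP^{-1})^0$, not $\cP^0$; the same inversion occurs for $\chi_1$, as you correctly note. The asymmetry $\cP$ versus $\cP^{-1}$ in the statement arises precisely because the paper uses $T_6=t_{1,2}^{-1}=\chi_2^{-1}$ rather than $\chi_2$. You flag this bookkeeping as delicate but do not quite resolve it for $\chi_2$. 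This is harmless for the downstream application (only $c_1(\cP)$ up to sign is needed for the differentials in the Leray spectral sequence), but worth getting straight.
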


\begin{proof}
Since the generators of the cone $\sigma_{II}^{(4)}$ correspond to
$T_1,T_2,T_4,T_5$ we can take $T_6=t_{1,2}^{-1}$ and $T_3=t_{3,3} t_{1,3}t_{2,3}$ as coordinates on the torus
$T/T^{(\sigma_{II}^{(4)})}$. In Lemma~\ref{firststep} we had seen that
the action of the group generated by the matrices
$g_3$ with $M,N \in \Z^3$ is as follows. For $N=(n_1,n_2,n_3)$ we have $\tau_{i,4} \mapsto \tau_{i,4} + n_i$ and for $M=(m_1,m_2,m_3)$ we have
$\tau_{i,j} \mapsto \tau_{i,j}  + m_j\tau_{i,4} + m_i\tau_{j,4} +m_i m_j\tau_{4,4}$ for $1 \leq i,j \leq 3$
and $\tau_{i,4} \mapsto \tau_{i,4}+m_i\tau_{4,4}$. In particular
$$
\tau_{1,2} \mapsto \tau_{1,2}  + m_2\tau_{1,4} + m_1\tau_{2,4} +m_1 m_2\tau_{4,4}
$$
whereas
$$
(\tau_{1,3} + \tau_{2,3} + \tau_{3,3}) \mapsto (\tau_{1,3} + \tau_{2,3} + \tau_{3,3}) +
$$
$$
m_3(\tau_{1,4} + \tau_{2,4} + \tau_{3,4}) + (m_1+m_2+m_3)\tau_{3,4} + m_3(m_1+m_2+m_3)\tau_{4,4}. 
$$
A comparison with the transformation behaviour for the Poincar\'e bundle described in Lemma \ref{firststep} gives the claim.
\end{proof}

\begin{lem}\label{lem_coho_2}
The rational cohomology groups with compact support of $\beta_3^0(\sigma_{II}^{(4)})$ are given by
$$
H_c^k(\beta_3^0(\sigma_{II}^{(4)});\Q)=\Q(-k/2), \quad k=10,12.
$$
\end{lem}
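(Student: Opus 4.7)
The plan is to follow the same strategy used in the previous two subsections for $\beta_3^0(\sigma^{(3)})$ and $\beta_3^0(\sigma_I^{(4)})$. Namely, $\beta_3^0(\sigma_{II}^{(4)})$ is the quotient by the finite stabilizer $G(\sigma_{II}^{(4)})\subset\GL(3,\Z)$ of the rank $2$ torus bundle $q_{\sigma_{II}^{(4)}}\co \mathcal T(\sigma_{II}^{(4)})\to \cX_1\times_{\ab1}\cX_1\times_{\ab1}\cX_1$, so I will first compute the $G(\sigma_{II}^{(4)})$-invariant rational cohomology of the fibre $(p\circ q_{\sigma_{II}^{(4)}})^{-1}([E])$ over a point $[E]\in\ab1$ and then glue over $\ab1$ by the Leray spectral sequence of $p\circ q_{\sigma_{II}^{(4)}}\co \mathcal T(\sigma_{II}^{(4)})\to\ab1$.

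First I would identify generators of $G(\sigma_{II}^{(4)})$ by finding all $g_4\in\GL(3,\Z)$ of the form described earlier whose action on $\Sym^2(\Z^3)$ preserves the set of rays $\{x_1^2,x_2^2,(x_2-x_3)^2,(x_1-x_3)^2\}$, and then compute their explicit action on both the basis $f_1,\dots,f_6$ of $\coh{E\times E\times E}1$ and the basis $Q_1,Q_2$ of $\coh{T/T^{\sigma_{II}^{(4)}}}1$ corresponding to the coordinates $T_3=t_{3,3}t_{1,3}t_{2,3}$ and $T_6=t_{1,2}^{-1}$ singled out in Lemma~\ref{lem:torussigma_II^4}. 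This is entirely analogous to \eqref{equ_trafo1'}--\eqref{equ_trafo4'} above and is carried out by computing the action on the Siegel block as in \eqref{equ_partquot}. Natural candidates for such symmetries include the swap $x_1\leftrightarrow x_2$, the global sign change $x_i\mapsto-x_i$, and the involution $(x_1,x_2,x_3)\mapsto(x_1-x_3,x_2-x_3,-x_3)$ which exchanges the two pairs $\{x_1^2,x_2^2\}$ and $\{(x_1-x_3)^2,(x_2-x_3)^2\}$.

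Next I would consider the Leray spectral sequence $E_2^{p,q}=\coh{E\times E\times E}p\otimes\coh{T/T^{\sigma_{II}^{(4)}}}q$ of the torus bundle $q_{\sigma_{II}^{(4)}}|_{E\times E\times E}$, restrict to $G(\sigma_{II}^{(4)})$-invariants, and compute the $d_2$ differentials by using Lemma~\ref{lem:torussigma_II^4}: namely $d_2$ is given by formally replacing $Q_1$ with $c_1(q^*\cP^{-1})$ and $Q_2$ with $c_1(p_{1,2}^*\cP)$, both of which are explicit in the basis $\{f_i\wedge f_j\}$ via the identity $c_1(\cP)=f_2\wedge f_3-f_1\wedge f_4$ recalled in Section~\ref{sigma3}, pulled back along the maps $p_{1,2}$ and $q(x,y,z)=(x+y+z,z)$. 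Poincar\'e duality on the smooth $6$-dimensional variety $\mathcal T(\sigma_{II}^{(4)})|_{E\times E\times E}$ then converts the invariant cohomology into the invariant cohomology with compact support, which is the cohomology with compact support of the fibre of $\beta_3^0(\sigma_{II}^{(4)})\to\ab1$.

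Finally, I would decompose the resulting fibrewise cohomology groups into symplectic local systems $\V_{2k}$ on $\ab1$ and plug them into the Leray spectral sequence of $p\circ q_{\sigma_{II}^{(4)}}$ over $\ab1$. Since $\cohc{\ab1}2=\Q(-1)$ for trivial coefficients, $\cohc[\V_{2k}]{\ab1}1=\Q$ for $k\geq1$, and both spaces vanish otherwise, the $E_2$ term is very sparse and weight considerations force degeneration at $E_2$, yielding the claim. The main obstacle is the combinatorial bookkeeping in the invariant calculation: because $\sigma_{II}^{(4)}$ is less symmetric than $\sigma_I^{(4)}$ and mixes all three toric coordinates, the action of $G(\sigma_{II}^{(4)})$ on $\bigwedge^\pu\coh{E\times E\times E}1\otimes\bigwedge^\pu\coh{T/T^{\sigma_{II}^{(4)}}}1$ is more twisted than in the previous cases. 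The expected outcome is that no non-trivial symplectic local system survives after taking $G(\sigma_{II}^{(4)})$-invariants and computing the $d_2$ differentials, so that only the trivial class and the fundamental class of the fibre contribute, producing exactly $\Q(-5)$ in degree $10$ and $\Q(-6)$ in degree $12$ after pairing with $\cohc{\ab1}2=\Q(-1)$.
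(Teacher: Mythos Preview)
Your overall strategy is correct and matches the paper's: compute the $G(\sigma_{II}^{(4)})$-invariant Leray spectral sequence of the rank~$2$ torus bundle over $E\times E\times E$, evaluate the $d_2$-differentials using the Chern classes from Lemma~\ref{lem:torussigma_II^4}, apply Poincar\'e duality, and then glue over $\ab1$.

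There is one concrete gap. Your ``natural candidates'' for generators of $G(\sigma_{II}^{(4)})$ --- the swap $x_1\leftrightarrow x_2$, the global sign $x_i\mapsto -x_i$, and the involution $(x_1,x_2,x_3)\mapsto(x_1-x_3,x_2-x_3,-x_3)$ --- act on the four extremal rays $\{x_1^2,x_2^2,(x_2-x_3)^2,(x_1-x_3)^2\}$ only through the Klein four-group. In fact the stabilizer acts on the four rays as the full symmetric group $S_4$ (together with $-\operatorname{Id}$), as the paper records; for instance the $3$-cycle $x_1\mapsto x_3-x_2$, $x_2\mapsto -x_2$, $x_3\mapsto x_1-x_2$ is one of the paper's generators. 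So your intuition that $\sigma_{II}^{(4)}$ is ``less symmetric'' than $\sigma_I^{(4)}$ is backwards: it is \emph{more} symmetric, and this is essential. If you mod out only by your smaller group, the invariant calculation will leave too many classes, and in particular non-trivial $\V_2$-pieces will appear that should not.

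Once the correct $S_4$-action is used, the paper finds that the invariant $E_2$ page of the fibre spectral sequence contains only trivial $\Sp(2,\Q)$-representations (Table~\ref{t:rank3}): one class each at $(p,q)=(0,0),(2,0),(4,0),(6,0),(2,1),(4,1)$. The two $d_2$-differentials $E_2^{2,1}\to E_2^{4,0}$ and $E_2^{4,1}\to E_2^{6,0}$ are then checked to be isomorphisms by the Chern-class substitution you describe, leaving only degrees $0$ and $2$ in cohomology of the fibre, hence degrees $8$ and $10$ in compact support. Pairing with $\cohc{\ab1}2=\Q(-1)$ gives the claim.
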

\begin{proof}
As in the previous case we first have to describe the automorphism $G(\sigma_{II}^{(4)})$ of the cone $\sigma_{II}^{(4)}$. 
This group is the symmetric group $S_4$ permuting the generators of the cone together with the map $x_i \mapsto -x_i$.
Hence we can work with the following generators:
\begin{equation}\label{equ_trafoII1}
x_i \mapsto - x_i, \quad i= 1, \dots ,6 
\end{equation}
\begin{equation}\label{equ_trafoII2}
x_1 \leftrightarrow x_2, \quad x_3 \mapsto x_1 + x_2 - x_3
\end{equation}
\begin{equation}\label{equ_trafoII3}
x_1 \mapsto x_1 - x_3, \quad x_2 \mapsto -x_2, \quad x_3 \mapsto -x_3 
\end{equation}
\begin{equation}\label{equ_trafoII4}
x_1 \mapsto x_3 - x_2, \quad x_2 \mapsto -x_2, \quad x_3 \mapsto x_1-x_2. 
\end{equation}
We now have to compute the induced action of these automorphisms on the cohomology groups
$\coh{T/T^{(\sigma_{II}^{(4)})}}\pu \otimes \coh{E \times E \times E}\pu$. To this end, we denote by $Q_3$ (respectively, $R$) the generator of $\coh{T/T^{(\sigma_{II}^{(4)})}}1$ corresponding to the parameter $T_6=t_{1,2}^{-1}$ (respectively, to $T_3=t_{1,3}t_{2,3}t_{3,3}$). 

It is immediately clear that in the case of (\ref{equ_trafoII1}) the action is given by
\begin{equation}\label{equ_trafoII1'}
Q_3, R \mapsto Q_3, R;  
\end{equation}
\begin{equation*}
f_i \mapsto -f_i, \quad i=1,\ldots,6. 
\end{equation*}
We note that this implies that there can be no non-trivial invariant cohomology classes involving terms of odd degree
in $H^{\bullet}(E \times E \times E)$.
Next we claim that the action on cohomology of (\ref{equ_trafoII2}) is given by
\begin{equation}\label{equ_trafoII2'}
Q_3\mapsto Q_3-R, \  R \mapsto -R
\end{equation}
\begin{equation*}
f_i \mapsto f_{i+2}+f_{i+4}, \quad i=1,2; \quad f_i \mapsto f_{i-2}+f_{i+2}, \quad i=3,4; \quad f_i \mapsto -f_i, \quad i=5,6. 
\end{equation*}
To see this we compute
$$
\begin{pmatrix}
0 & 1 & 1 & 0\\
1 & 0 & 1 & 0\\
0 & 0 & -1 & 0\\
0 & 0 & 0 & 1
\end{pmatrix}
\begin{pmatrix}
\tau_{1,1}&\tau_{1,2}&\tau_{1,3}&\tau_{1,4}\\
\tau_{1,2}&\tau_{2,2}&\tau_{2,3}&\tau_{2,4}\\
\tau_{1,3}&\tau_{2,3}&\tau_{3,3}&\tau_{3,4}\\
\tau_{1,4}&\tau_{2,4}&\tau_{3,4}&\tau_{4,4}
\end{pmatrix}
\begin{pmatrix}
0 & 1 & 0 & 0\\
1 & 0 & 0 & 0\\
1 & 1 & -1 & 0\\
0 & 0 & 0 & 1
\end{pmatrix} =
$$
$$
= \begin{pmatrix}
*&\tau_{1,2}+\tau_{2,3}+\tau_{1,3}+\tau_{3,3}& -\tau_{2,3}-\tau_{3,3}&\tau_{2,4}+\tau_{3,4}\\
*&*& -\tau_{1,3} - \tau_{3,3}&\tau_{1,4}+\tau_{3,4}\\
*&*&\tau_{3,3}& -\tau_{3,4}\\
*&*& *&\tau_{4,4}
\end{pmatrix}.
$$
This immediately gives the claim for the $f_i$. For $Q_3,R$ we observe that the action induced on the homology is dual to the action on the subspace $\langle -\tau_{1,2},\tau_{1,3}+\tau_{2,3}+\tau_{3,3}$. Since cohomology is dual to homology, the action on $Q_3,R$ agrees with that on $-\tau_{1,2},\tau_{1,3}+\tau_{2,3}+\tau_{3,3}$.

A similar calculation gives the following results in the remaining cases:
\begin{equation}\label{equ_trafoII3'}
Q_3\mapsto -Q_3, \ R\mapsto -Q_3+R;
\end{equation}
\begin{equation*}
f_i \mapsto f_i, \quad i=1,2; \quad f_i \mapsto -f_i, \quad i=3,4; \quad f_i \mapsto -f_{i-4}-f_i, \quad i=5,6. 
\end{equation*}
\begin{equation}\label{equ_trafoII4'}
Q_3 \leftrightarrow R ;  
\end{equation}
\begin{equation*}
f_i \mapsto f_{i+4}, \quad i=1,2; \quad f_i \mapsto -f_{i-2}-f_i-f_{i+2}, \quad i=3,4; \quad f_i \mapsto f_{i-4}, \quad i=5,6. 
\end{equation*}
It is now straightforward to compute the invariants under $G(\sigma_{II}^{(4)})$. In the cohomology group
$\coh{T/T^{(\sigma_{II}^{(4)})}}{0} \otimes \coh{E \times E \times E}{2}$ we find one invariant tensor, namely
$$
I_1= 
3(f_1 \wedge f_2 + f_3 \wedge f_4)+2\phi + \psi,
$$
where we denoted $\phi = (f_1 + f_3 + f_5) \wedge f_6 + f_5 \wedge (f_2 + f_4 + f_6)$ and $\psi = f_1 \wedge f_4 + f_3 \wedge f_2$.
In  $\coh{T/T^{(\sigma_{II}^{(4)})}}{1} \otimes \coh{E \times E \times E}{2}$ we also obtain one invariant tensor, namely
$$
I_2= -R\otimes(2\phi+\psi) + Q\otimes (\phi+2\psi).$$
The invariant class in $\coh{T/T^{(\sigma_{II}^{(4)})}}{0} \otimes \coh{E \times E \times E}{4}$ is $I_1 \wedge I_1$
and in %
$\coh{T/T^{(\sigma_{II}^{(4)})}}{1} \otimes \coh{E \times E \times E}{4}$
it is $I_2\wedge I_1$.
This together with the fundamental classes in  %
$\coh{T/T^{(\sigma_{II}^{(4)})}}{0} \otimes \coh{E \times E \times E}{0}$
and $\coh{T/T^{(\sigma_{II}^{(4)})}}{0} \otimes \coh{E \times E \times E}{6}$ are the only invariants.

As before we now look at the Leray spectral sequence in cohomology associated with 
$p \circ q_{\sigma_{II}^{(4)}}\co {\mathcal T}(\sigma_{II}^{(4)}) \to \ab1 $.
Since all representations are trivial we 
thus obtain Table {\ref{t:rank3}}.
\begin{table}\caption{\label{t:rank3} $E_2$ term of the spectral sequence converging to the cohomology of $\beta_3^0(\sigma_{II}^{(4)})$}
$$
\begin{array}{r|cccccccc}
q&&&&&&&&\\[6pt]
2&0&0&0&0&0&0&0&\\
1&0&0&\Q(-2)&0&\Q(-3)&0&0&\\
0&\Q&0&\Q(-1)&0&\Q(-2)&0&\Q(-3)&
\\\hline
& 0&1&2&3&4&5&6&p
\end{array}
$$
\end{table}
Hence, we have two differentials which could be non-zero, namely
$$
d_2^{2,1}\co
\coh{T/T^{(\sigma_{II}^{(4)})}}{1} \otimes \coh{E \times E \times E}{2} \to
\coh{T/T^{(\sigma_{II}^{(4)})}}{0} \otimes \coh{E \times E \times E}{4},
$$
resp.
$$
d_2^{4,1}\co \coh{T/T^{(\sigma_{II}^{(4)})}}{1} \otimes \coh{E \times E \times E}{4} \to
\coh{T/T^{(\sigma_{II}^{(4)})}}{0} \otimes \coh{E \times E \times E}{6}.
$$
Indeed we claim that they do not vanish. For this we use the description of 
$T({\sigma_{II}^{(4)}})|_{E\times E\times E}$ given in Lemma \ref{lem:torussigma_II^4}. 
It follows from this description that this bundle splits into the product of two factors with Euler classes 
$-\phi$
and $\psi$.
The claim that the first differential is non-zero is now equivalent to
$$
\phi\wedge(2\phi+\psi)+\psi\wedge(\phi+2\psi)\neq0.
$$
For the second differential we must check that %
$$\left(\phi\wedge(2\phi+\psi)+\psi\wedge(\phi+2\psi)\right)\wedge I_1\neq0.$$
This can be checked by direct calculation. At the same time this proves that the first differential does not vanish.
The claim of the lemma now follows immediately after converting to cohomology with compact support.  
\end{proof}

\subsection{The cohomology of $\beta_3^0(\sigma^{(5)})$}

\begin{lem}\label{lem_coho_3}
The rational cohomology groups with compact support of $\beta_3^0(\sigma^{(5)})$ are given by
$$
H_c^k(\beta_3^0(\sigma^{(5)});\Q)=\left\{\begin{array}{ll}
\Q(-k/2) & k=6,10\\
\Q(-k/2)^{\oplus2} & k=8\\
0 & \text{otherwise}.\\
\end{array}\right.
$$
\end{lem}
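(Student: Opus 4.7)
The plan is to follow the same strategy used in the preceding three subsections. The stratum $\beta_3^0(\sigma^{(5)})$ is the finite quotient by $G(\sigma^{(5)})$ of a rank $1$ torus bundle $q_{\sigma^{(5)}}\co \mathcal T(\sigma^{(5)}) \to \cX_1\times_{\ab 1}\cX_1\times_{\ab 1}\cX_1$ with fibre $T/T^{\sigma^{(5)}}\cong \C^*$. Since the generators of $\sigma^{(5)}$ are $T_1,T_2,T_3,T_4,T_5$ in the coordinates of (\ref{equ_coordinates}), the remaining coordinate $T_6=t_{1,2}^{-1}$ parametrizes $T/T^{\sigma^{(5)}}$; the usual Poincar\'e-bundle computation (cf.\ Lemma~\ref{zerothstep} and Lemma~\ref{lem:torussigma_II^4}) identifies $\mathcal T(\sigma^{(5)})|_{E\times E\times E}$ with the $\C^*$-bundle $p_{1,2}^*(\cP^0)$, where $p_{1,2}\co E\times E\times E\to E\times E$ is the projection onto the first two factors.

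Next I would determine $G(\sigma^{(5)})\subset \GL(3,\Z)$, namely the subgroup fixing the set of rays $\{x_1^2,x_2^2,x_3^2,(x_2-x_3)^2,(x_1-x_3)^2\}$ up to sign. A short case analysis shows that $G(\sigma^{(5)})$ is generated by the global involution $x_i\mapsto -x_i$, the swap $x_1\leftrightarrow x_2$, and the involution $(x_1,x_2,x_3)\mapsto(x_1-x_3,x_2-x_3,-x_3)$, which permutes the five rays. For each generator, one writes down the induced action on $\coh{T/T^{\sigma^{(5)}}}1$ (generated by a single class $Q$) and on $\coh{E\times E\times E}\pu$ (in terms of the basis $f_1,\dots,f_6$) by the same matrix computation used in (\ref{equ_trafoII2'})--(\ref{equ_trafoII4'}).

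With these actions in hand, I would run the Leray spectral sequence $E_2^{p,q}=\coh{T/T^{\sigma^{(5)}}}q\otimes \coh{E\times E\times E}p$ and isolate its $G(\sigma^{(5)})$-invariant part. Since the global involution $-1$ acts by $(-1)^p$ on $\coh{E\times E\times E}p$, no invariants occur in odd $p$, so only columns $p=0,2,4,6$ and rows $q=0,1$ survive. A direct computation of the $G(\sigma^{(5)})$-invariants yields one invariant in each of $E_2^{0,0}, E_2^{6,0}$, two invariants in $E_2^{2,0}$ and $E_2^{4,0}$ (all of Tate type), and one invariant each in $E_2^{2,1}$ and $E_2^{4,1}$. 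The differential $d_2$ is, as in Lemma~\ref{lem:invariantcohomologysigma_I^4} and Lemma~\ref{lem_coho_2}, given by replacing $Q$ with $c_1(p_{1,2}^*\cP)=f_1\wedge f_4-f_2\wedge f_3$; a direct wedge-product calculation shows that both $d_2^{2,1}$ and $d_2^{4,1}$ are non-zero, hence isomorphisms by Schur's lemma. This leaves an $E_3=E_\infty$ with one invariant class in each of the bidegrees $(0,0), (2,0), (4,0), (6,0)$, giving the invariant cohomology of the fibre $(p\circ q_{\sigma^{(5)}})^{-1}([E])$ concentrated in degrees $0,2,4,6$.

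Finally, I would feed this into the Leray spectral sequence of $p\circ q_{\sigma^{(5)}}\co \mathcal T(\sigma^{(5)})\to \ab 1$. Because all the induced local systems on $\ab 1$ are trivial of pure Tate type, only the column $p=2$ (using $\cohc{\ab 1}{2}=\Q(-1)$) contributes, and all differentials vanish for Hodge-theoretic reasons. Poincar\'e duality for the smooth $5$-dimensional quotient stack $\beta_3^0(\sigma^{(5)})$ then converts the resulting cohomology into the cohomology with compact support in the statement. The main technical point is the explicit identification of $G(\sigma^{(5)})$ together with the verification that the relevant $d_2$ differentials are non-zero; once this is done, the rest of the argument is the same pattern as in the preceding three lemmas.
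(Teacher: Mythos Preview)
Your strategy is exactly that of the paper, but the execution has two genuine gaps.

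First, your generating set for $G(\sigma^{(5)})$ is incomplete. Each of your three generators fixes $\tau_{1,2}$, hence fixes the class $Q\in H^1(T/T^{\sigma^{(5)}})$; but the cone also admits the automorphism $(x_1,x_2,x_3)\mapsto(x_1-x_3,-x_2,-x_3)$, which swaps the rays $x_1^2\leftrightarrow(x_1-x_3)^2$ while fixing the other three, and sends $Q\mapsto -Q$. Since the three elements you list all lie in the kernel of the character $G(\sigma^{(5)})\to\{\pm1\}$ recording the action on $Q$, this fourth element cannot be generated by them. Without it, the $q=1$ row of your $E_2$ term would have exactly the same invariants as the $q=0$ row, contradicting your own claim that $E_2^{2,1}$ and $E_2^{4,1}$ are one-dimensional. (The $E_2$ dimensions you quote are in fact correct for the \emph{full} group, so presumably your actual computation used it; but the text is inconsistent.)

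Second, and more seriously, your $E_\infty$ bookkeeping is wrong and leads to a result contradicting the lemma. The differential $d_2^{2,1}\co E_2^{2,1}\cong\Q(-2)\to E_2^{4,0}\cong\Q(-2)^{\oplus2}$ cannot be an isomorphism for dimension reasons; Schur's lemma does not apply because the target is not irreducible. A non-zero $d_2^{2,1}$ is injective, so $E_3^{2,0}=\Q(-1)^{\oplus2}$ is untouched while $E_3^{4,0}$ drops to $\Q(-2)$. The differential $d_2^{4,1}\co\Q(-3)\to E_2^{6,0}\cong\Q(-3)$ is an isomorphism, so $E_3^{6,0}=0$. The correct $E_\infty$ on row $q=0$ is therefore $1,2,1,0$ in columns $p=0,2,4,6$, not $1,1,1,1$ as you assert. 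Your version, pushed through the Leray spectral sequence over $\ab1$ and Poincar\'e duality, would yield $H^4_c(\beta_3^0(\sigma^{(5)}))=\Q(-2)$ and $H^8_c=\Q(-4)$, both of which contradict the statement you are proving.
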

\begin{proof}
We first have to compute the automorphism group $G(\sigma^{(5)})$. It is not hard to see that this group is 
generated by the transformations
\begin{equation}\label{equ_trafo5}
x_i \mapsto -x_i, \quad i=1,2,3
\end{equation}
\begin{equation}\label{equ_trafo6}
x_1 \leftrightarrow x_2, \quad x_3 \mapsto x_3
\end{equation}
\begin{equation}\label{equ_trafo7}
x_1 \mapsto x_1 - x_3, \quad x_2 \mapsto x_2 - x_3, \quad x_3 \mapsto -x_3 
\end{equation}
\begin{equation}\label{equ_trafo8}
x_1 \mapsto x_1 - x_3, \quad x_2 \mapsto -x_2, \quad x_3 \mapsto -x_3. 
\end{equation}
A computation analogous to that in Lemma \ref{lem_coho_1} shows that this results in the following action on cohomology,
where again we denoted by $f_i$ the generators of the cohomology of $E\times E\times E$ and by $Q_3$ the generator of the cohomology of the fibre of the torus bundle:

\begin{equation}\label{equ_trafo9}
f_i \mapsto -f_i, i=1, \ldots ,6; \quad Q_3\mapsto Q_3
\end{equation}
\begin{equation}\label{equ_trafo10}
f_i \leftrightarrow f_{i+2},  i=1,2; \quad f_j \mapsto f_j, j=5,6; \quad Q_3\mapsto Q_3
\end{equation}
\begin{equation}\label{equ_trafo11}
f_i \mapsto f_i, i=1, \ldots ,4; \quad f_k \mapsto -f_{k-4}-f_{k-2}-f_k, k=5,6; \quad Q_3\mapsto Q_3
\end{equation}
\begin{equation}\label{equ_trafo12}
f_i \mapsto f_i, i=1,2; \quad f_j \mapsto -f_j, j=3,4; \quad f_k \mapsto -f_{k-4} - f_k, k=5,6; \quad Q_3\mapsto -Q_3.
\end{equation}

Next, we compute the invariant cohomology in $\coh{\C^*}0 \otimes \coh{E \times E \times E}{2k}$. 
Clearly this is $1$-dimensional
for $k=0,6$. By duality it is enough to do the computation for $k=2$. Here we find a $2$-dimensional invariant subspace generated by
$i_1:=f_1 \wedge f_2 +  f_3 \wedge f_4$ and $i_2:=f_1 \wedge f_4 +  f_3 \wedge f_2 + 2 (f_1+f_3+f_5)\wedge f_6 + 2f_5\wedge (f_2+f_4+f_6)$.

In this situation we also have invariant cohomology in 
$\coh{\C^*}1 \otimes \coh{E \times E \times E}{2}$.
This is $1$-dimensional and generated by $Q_3\otimes (f_1\wedge f_4 + f_3 \wedge f_2)$.
By duality we also have a $1$-dimensional invariant subspace in  $H^1(\C^*) \otimes H^4(E \times E \times E)$. A standard calculation shows that
this is generated by 
$Q_3\wedge(f_1 \wedge f_4 +  f_3 \wedge f_2)\wedge i_2$.

In this case the differentials in the Leray spectral sequence are not automatically $0$. The situation is described in 
Table \ref{t:rank2}. 
\begin{table}\caption{\label{t:rank2} $E_2$ term of the spectral sequence converging to the cohomology of $\beta_3^0(\sigma^{(5)})$}
$$
\begin{array}{r|cccccccc}
q&&&&&&&&\\[6pt]
1&0&0&\Q(-2)&0&\Q(-3)&0&0&\\
0&\Q&0&\Q(-1)^{\oplus2}&0&\Q(-2)^{\oplus2}&0&\Q(-3)&
\\\hline
& 0&1&2&3&4&5&6&p
\end{array}
$$
\end{table}
here are two differentials which we have to consider. These are:
$$
d_2^{2,1}: \coh{\C^*}1 \otimes \coh{E\times E \times E}2 \to \coh{\C^*}0 \otimes \coh{E\times E \times E}4,
$$
resp.
$$
d_2^{4,1}: \coh{\C^*}1 \otimes \coh{E\times E \times E}4 \to \coh{\C^*}0 \otimes \coh{E\times E \times E}6.
$$

We claim that both differentials are non-zero, i.e. they have rank $1$. We first treat $d_2^{1,2}$. 
The differential is given 
by taking the cup-product with the first Chern class of the vector bundle spanned by the torus bundle $\mathcal T(\sigma^{(5)})|_{E \times E \times E}$.
As in previous cases on can see that $\mathcal T(\sigma^{(5)})|_{E \times E \times E} \cong p_{12}^*(\mathcal P^0)$.
This shows that
$$
d_2^{1,2}\co Q_3 \otimes (f_1\wedge f_4 + f_3 \wedge f_2) \mapsto  
$$
$$
(f_1\wedge f_4 + f_3 \wedge f_2) \wedge (f_1\wedge f_4 + f_3 \wedge f_2) =
2 f_1 \wedge f_2 \wedge f_3 \wedge f_4 \neq 0. 
$$
The argument for $d_2^{1,4}$ is analogous.
Finally we use the duality $H^k_c(\beta_3^0(\sigma^{(5)});\Q) = H^{10-k}(\beta_3^0(\sigma^{(5)});\Q)^* \otimes \Q(-5)$ 
(which holds on finite smooth covers) to obtain the claim.

\end{proof}
\subsection{The cohomology of $\beta_3^0(\sigma^{(6)})$}\label{sigma6}
\begin{lem}\label{lem_coho_4}
The rational cohomology groups with compact support of $\beta_3^0(\sigma^{(6)})$ are given by
$$
H_c^k(\beta_3^0(\sigma^{(6)});\Q)=\left\{\begin{array}{ll}
\Q(-k/2) & k=2,4,6,8\\
0 & \text{otherwise}.\\
\end{array}\right.
$$
\end{lem}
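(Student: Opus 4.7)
Since $\sigma^{(6)}$ has full dimension $6=\dim\Sym^2(\Z^3)$, the torus $T/T^{\sigma^{(6)}}$ is trivial and $\beta_3^0(\sigma^{(6)})$ is simply the quotient of $\cX_1\times_{\ab 1}\cX_1\times_{\ab 1}\cX_1$ by the finite stabilizer $G(\sigma^{(6)})\subset\GL(3,\Z)$ (acting trivially on the base $\ab 1$). The plan is to compute the $G(\sigma^{(6)})$-invariant rational cohomology of the fibre $E\times E\times E$ and then apply the Leray spectral sequence for the projection $\pi\co \beta_3^0(\sigma^{(6)})\to\ab 1$.

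First I would identify $G(\sigma^{(6)})$: the six generators of $\sigma^{(6)}$ correspond to the squared pairwise differences between the four vertices of the tetrahedron $\{0,x_1,x_2,x_3\}\subset\Z^3$, and a direct verification (along the lines of Lemma~\ref{lem_coho_3}, using the matrices $g_4$) gives an isomorphism $G(\sigma^{(6)})\cong\s_4\times\{\pm 1\}$ of order $48$, with $\s_4$ acting on $\Z^3$ via its three-dimensional reflection representation. Writing $V=H^1(E;\Q)\cong\V_1$ (with its natural $\Sp(2,\Z)$-action and trivial $G(\sigma^{(6)})$-action) and $W=\Q^3$ (with trivial $\Sp(2,\Z)$-action), one has $H^1(E\times E\times E;\Q)\cong W\otimes V$, and Cauchy's identity gives
\begin{equation*}
H^k(E\times E\times E;\Q)=\bigwedge\nolimits^k(W\otimes V)=\bigoplus_{\lambda\vdash k}\mathbb{S}_\lambda(W)\otimes\mathbb{S}_{\lambda^T}(V),
\end{equation*}
the sum running over partitions fitting in a $3\times 2$ rectangle.

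The key step is the determination of $(\mathbb{S}_\lambda W)^{G(\sigma^{(6)})}$. Since $-\op{Id}$ acts on $\mathbb{S}_\lambda(W)$ by $(-1)^{|\lambda|}$, all odd-$|\lambda|$ summands die. A direct character computation for $\s_4$ on the reflection representation (using the eigenvalues $(1,1,1)$, $(1,1,-1)$, $(1,\omega,\omega^2)$, $(i,-1,-i)$ and $(1,-1,-1)$ on the five conjugacy classes) then shows that the trivial representation of $\s_4$ appears in $\mathbb{S}_\lambda(W)$ with multiplicity one exactly when $\lambda=(2^j)$ for some $0\leq j\leq 3$, and with multiplicity zero otherwise. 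Combining this with $\mathbb{S}_{(j,j)}(V)=(\det V)^j=\Q(-j)$, one obtains $(H^{2j}(E^3;\Q))^{G(\sigma^{(6)})}=\Q(-j)$ for $j=0,1,2,3$, and the invariant cohomology vanishes in odd degree.

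Finally, all invariant local systems $(R^q\pi_!\Q)^{G(\sigma^{(6)})}$ on $\ab 1$ are trivial up to a Tate twist, and since $H^\bullet_c(\ab 1;\Q)$ equals $\Q(-1)$ in degree~$2$ and vanishes otherwise, the Leray spectral sequence is concentrated in the single column $p=2$, with entries $E_2^{2,2j}=\Q(-j-1)$ for $j=0,1,2,3$. It degenerates trivially, yielding the claimed Betti numbers. The main obstacle is the character computation isolating the partitions $\lambda=(2^j)$ as the unique source of $G(\sigma^{(6)})$-invariants; alternatively one can combine the Cauchy decomposition above with the classical identification $\Q[W]^{\s_4}\cong\Q[e_2,e_3,e_4]$ of the invariant ring of the reflection representation of $\s_4$.
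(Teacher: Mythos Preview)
Your proof is correct and reaches the same conclusion, but by a genuinely different route than the paper's.

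The paper argues by hand: it lists generators for $G(\sigma^{(6)})$ (the $\s_3$ permuting $x_1,x_2,x_3$, the involution $x_i\mapsto -x_i$, and the translation-type involution $(x_1,x_2,x_3)\mapsto(x_1-x_3,x_2-x_3,-x_3)$), observes that $-\op{Id}$ kills the odd-degree cohomology, and by Poincar\'e duality reduces to checking that the invariant subspace of $H^2(E\times E\times E;\Q)$ is one-dimensional, exhibiting the single invariant $2v_1+v_2+v_3$ explicitly. The final Leray step is left implicit, as in the neighbouring lemmas.

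You instead recognise $G(\sigma^{(6)})\cong\s_4\times\{\pm 1\}$ acting through the reflection representation on $W=\Q^3$, apply the Cauchy decomposition $\bigwedge^k(W\otimes V)=\bigoplus_\lambda\mathbb{S}_\lambda(W)\otimes\mathbb{S}_{\lambda^T}(V)$, and isolate the $\s_4$-trivial summands of $\mathbb{S}_\lambda(W)$ by a character computation. This is cleaner and explains structurally \emph{why} each invariant piece is a single Tate twist: the only surviving $\lambda$ are the rectangles $(2^j)$, whose partners $\mathbb{S}_{(j,j)}(V)=(\det V)^j$ are $\Sp(2,\Q)$-trivial. The paper's approach, by contrast, has the virtue of producing the invariant class explicitly, which is sometimes needed elsewhere (e.g.\ to compute differentials in adjacent strata). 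One small remark: your ``alternative'' via $\Q[W]^{\s_4}\cong\Q[e_2,e_3,e_4]$ directly controls only $\Sym^j(W)^{\s_4}$, not the general $\mathbb{S}_\lambda(W)^{\s_4}$; for the non-symmetric $\lambda$ occurring here (such as $(1,1)$ and $(2,1,1)$) you would still need the identification $\mathbb{S}_{(2,1,1)}(W)\cong\det(W)\otimes W$ and a separate check, so the character computation remains the cleanest path.
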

\begin{proof}
The proof of this lemma is analogous to the other cases. We first note that the automorphism group of $G(\sigma^{(6)})$
is generated by the symmetric group in three variables permuting the coordinates $x_i$ ($i=1,2,3$) and the transformations \eqref{equ_trafo5} and \eqref{equ_trafo7} already considered in the previous section.
In this case the torus rank is $0$ and hence it suffices to compute the action on the cohomology of the triple product
$E \times E \times E$. In view of the transformation (\ref{equ_trafo5}) there is no invariant in odd degree. By duality it is enough to compute
the invariant cohomology in $\coh{E \times E \times E}2$.
A straightforward calculation shows that this is $1$-dimensional with generator
$2v_1+v_2+v_3$, with $v_1=f_1 \wedge f_2 + f_3 \wedge f_4 +f_5 \wedge f_6$, $v_2=f_1 \wedge f_4 + f_3 \wedge f_6 + f_5 \wedge f_2$
and $v_3=f_1 \wedge f_6 +f_3 \wedge f_2 + f_5 \wedge f_4$.
\end{proof}

\subsection{The cohomology of $\beta_3^0 $}
In this section, we will use the computations on the strata of $\beta_3^0$ to prove the following result.

\begin{prop}\label{cohom_beta3}
The rational cohomology with compact support of $\beta_3^0$ is as follows: the non-zero Betti numbers are
$$
\begin{array}{c|cccccccccc}
i&2&4&5&6&7&8&10&12&14\\\hline
b_i&1&1&1&2&1&4&4&3&1
\end{array}
$$
One has $\cohc{\beta_3^0}7=\Q(-1)$ and $\cohc{\beta_3^0}5=\Q$.
Furthermore all cohomology groups of even degree are algebraic. 
\end{prop}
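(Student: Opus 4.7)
My plan is to run the Gysin spectral sequence in cohomology with compact support associated with the stratification of $\beta_3^0$ by the five substrata $\beta_3^0(\sigma)$, using Lemmas~\ref{coho_sigma3}--\ref{lem_coho_4} as input.

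First I would arrange the five strata in order of increasing complex dimension,
\[
\beta_3^0(\sigma^{(6)}),\quad \beta_3^0(\sigma^{(5)}),\quad \beta_3^0(\sigma_I^{(4)})\sqcup \beta_3^0(\sigma_{II}^{(4)}),\quad \beta_3^0(\sigma^{(3)}),
\]
and form the filtration $T_1\subset T_2\subset T_3\subset T_4 = \beta_3^0$ by closed subsets, with $T_p$ the union of the first $p$ strata. The closedness of each $T_p$ follows from the standard toric identity $\overline{O_\sigma}=\bigsqcup_{\sigma\leq\tau}O_\tau$ applied to the cones of $\Sigma_3^{\Perf}$, together with the observation that, up to the $\GL(3,\Z)$-action, $\sigma^{(3)}$ is a face of every larger cone, each of $\sigma_I^{(4)}$ and $\sigma_{II}^{(4)}$ is a face of $\sigma^{(5)}$, and $\sigma^{(5)}$ is a face of $\sigma^{(6)}$.

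Next I would assemble the $E_1$ page $E_1^{p,q}=\cohc{T_p\setminus T_{p-1}}{p+q}$ from the four lemmas and inspect potential differentials $d_r\colon E_r^{p,q}\to E_r^{p+r,q-r+1}$. A systematic comparison of bidegrees against the Hodge weights of the entries shows that every such differential either has trivial source, trivial target, or connects source and target of incompatible weights, with the single exception of
\[
d_1\colon E_1^{3,5}=\cohc{\beta_3^0(\sigma_I^{(4)})}{8}=\Q(-4)+\Q(-2)\longrightarrow E_1^{4,5}=\cohc{\beta_3^0(\sigma^{(3)})}{9}=\Q(-2).
\]
By weight this map necessarily factors through the $\Q(-2)$-quotient of its source, so it is either zero or surjective. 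If surjective, the spectral sequence degenerates at $E_2$ with exactly the Betti numbers and Hodge structures claimed in Proposition~\ref{cohom_beta3}: in each total degree the surviving $E_\infty^{p,q}$ entries are pure Tate of a common weight, so the induced filtration on $\cohc{\beta_3^0}{p+q}$ splits canonically and the algebraicity statement also follows.

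The main obstacle is thus to show that this $d_1$ is non-zero. Both $\Q(-2)$-classes have a common origin: by the computations inside the proofs of Lemmas~\ref{firststep} and~\ref{lem:invariantcohomologysigma_I^4}, each arises as the image of the generator of $\cohc[\V_{2}(-2)]{\ab 1}{1}\cong\Q$ under the Leray pairing over $\ab 1$ with a copy of the local system $\V_{2}(-2)\cong\Sym^2(\coh E 1)(-2)$ appearing in the fibre cohomology, in degree $7$ for $\mathcal T(\sigma_I^{(4)})|_{E\times E\times E}$ and in degree $8$ for $\mathcal T(\sigma^{(3)})|_{E\times E\times E}$. The inclusion of the toric orbit $O_{\sigma_I^{(4)}}$ as the codimension-one face of $\overline{O_{\sigma^{(3)}}}$ cut out by the extra ray $(x_2-x_3)^2$ induces, fibrewise over $\ab 1$ and $\Sp(2,\Q)$-equivariantly, a Gysin connecting map between the two $\V_{2}(-2)$-summands. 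An explicit Chern-class computation, analogous to the evaluation of the internal torus-bundle $d_2$-differentials carried out in the proofs of Lemmas~\ref{firststep} and~\ref{lem:invariantcohomologysigma_I^4}, would show this Gysin map to be non-zero on the $\V_{2}(-2)$-piece, and naturality of the Leray spectral sequence over $\ab 1$ then transfers this non-vanishing to $d_1$.
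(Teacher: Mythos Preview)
Your proposal is correct and follows essentially the same route as the paper: the same four-step filtration, the same $E_1$ page, the same identification of $d_1^{3,5}$ as the unique potentially non-trivial differential, and the same reduction to a fibrewise Gysin map over $\ab 1$ acting between the two $\V_2(-2)$-summands sitting in degrees $7$ and $8$ of the respective fibre cohomologies.

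The one point where your description diverges from what is actually needed is the characterisation of the final computation as ``an explicit Chern-class computation, analogous to the evaluation of the internal torus-bundle $d_2$-differentials''. The connecting map here is not of that type. The relevant inclusion is the toric boundary $\{0\}\times(\C^*)^2\hookrightarrow\C\times(\C^*)^2$, whose Gysin differential in cohomology sends $Q_i\mapsto Q_i\wedge Q_1$ (wedging with the generator corresponding to the extra ray $(x_2-x_3)^2$), not cup product with a Chern class. The paper carries this out explicitly: one takes the generators $g_{i,j}$ of the degree-$3$ invariant cohomology of the $\sigma_I^{(4)}$-fibre (Table~\ref{t:rank3_sigma_I}), replaces each $Q_k$ by $Q_k\wedge Q_1$, and then symmetrises under $G(\sigma^{(3)})$; the resulting classes visibly span the $\V_2(-2)$-summand on the $\sigma^{(3)}$-side. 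This is a direct linear-algebra check rather than an Euler-class argument, but your overall plan is otherwise on target.
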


\proof
We consider the Gysin spectral sequence associated with the stratification of $\beta_3^0$ given by the locally closed strata
$W_1=\beta_3^0(\sigma^{(6)})$, $W_2=\beta_3^0(\sigma^{(5)})$, 
$W_3=\beta_3^0(\sigma_I^{(4)}) \cup \beta_3^0(\sigma_{II}^{(4)})$ and $W_4= \beta_3^0(\sigma^{(3)})$.
We set $Y_p = \overline W_p$.
This is the spectral sequence 
$E_\pu^{p,q}\Rightarrow \cohc{\beta_3^{(0)}}{p+q}$ with $E_1^{p,q}=\cohc{Y_p\setminus Y_{p-1}}{p+q}=
\cohc{W_p}{p+q}$. 
The cohomology with compact support of the strata $W_i$ was computed in the Lemmas \ref{coho_sigma3}, \ref{lem_coho_1}, \ref{lem_coho_2}, \ref{lem_coho_3} and \ref{lem_coho_4}.
In view of these results, the $E_1$ term of the Gysin spectral sequence is as given in Table~\ref{gysin-beta30}.  
\begin{table}
\caption{Gysin spectral sequence converging to the cohomology with compact support of $\beta_3^0$}\label{gysin-beta30}
$$
\begin{array}{r|ccccc}
q&\\[6pt]
10&0&0&0&\Q(-7)\\
9&0&0&\Q(-6)^{\oplus2}&0\\
8&0&\Q(-5)&0& \Q(-6)\\
7&\Q(-4)&0&\Q(-5)^{\oplus3}&0\\
6&0&\Q(-4)^{\oplus2}&0&0\\
5&\Q(-3)&0&\Q(-4) + \Q(-2)&\Q(-2)\\
4&0&\Q(-3)&0& 0\\
3&\Q(-2)&0&0&\Q(-1)\\
2&0&0&\Q&0\\
1&\Q(-1)&0&0&0
\\\hline
&1&2&3&4&p
\end{array}
$$
\end{table}
We consider the differentials 
$d_r^{p,q}\co E_r^{p,q} \to E_r^{p+r,q-r+1}$. Inspection of Table~\ref{gysin-beta30} shows that the only possible
non-zero differential is  $d_1^{3,5}\co E_1^{3,5}=\cohc{\beta^0_3(\sigma^{(4)}_I)}8 \to E_1^{4,5}=\cohc{\beta^0_3(\sigma^{(3)})}9$. We can interpret this differential as arising from the Gysin long exact sequence associated with the inclusion of $\beta^0_3(\sigma^{(4)}_I$ in the partial compactification $\beta^0_3(\sigma^{(3)})\cup\beta^0_3(\sigma^{(4)}_I)$ of $\beta^0_3(\sigma^{(3)})$. Let us denote by $\Psi_E$ the fibre of the fibration $\beta^0_3(\sigma^{(3)})\cup\beta^0_3(\sigma^{(4)}_I)\rightarrow \ab1$ over a point $[E]\in\ab1$. Thanks to the Leray spectral sequence associated with that fibration, all we need to know is that the cohomology with compact support of $\Psi_E$ vanishes in degree $7$. This requires to prove that the differential $$d_7:\;\cohc{(p\circ q_{\sigma^{(4)}_I})^{-1}([E])}7\rightarrow \cohc{(p \circ q_{\sigma^{(3)}})^{-1}([E])}8$$ in the Gysin long exact sequence associated with $(p\circ q_{\sigma^{(4)}_I})^{-1}([E])\subset\Psi_E$ is an isomorphism. 

Since in the proofs of Lemma~\ref{firststep} and Lemma~\ref{lem:invariantcohomologysigma_I^4} we described the generators of the cohomology of the fibres of $E$ rather than those of the cohomology with compact support, we shall analyze the map induced by $d_7$ on cohomology by Poincar\'e duality, whose rank coincides with that of $d_7$.
Let us recall that the inclusion of $\{0\}\times(\C^*)^2$ in $\C\times(\C^*)^2$ induces a Gysin long exact sequence whose differentials define the maps
$$
\begin{matrix}
\coh{(\C^*)^2}k\otimes\Q(-1)&\longrightarrow&\coh{(\C^*)^3}{k+1}& &\ \\
T_i &\longmapsto& T_i\wedge T_1&&i=2,3.
\end{matrix}
$$

As a consequence, the differential $\coh{(p\circ q_{\sigma^{(4)}_I})^{-1}([E])}3\otimes\Q(-1)\rightarrow \coh{(p \circ q_{\sigma^{(3)}})^{-1}([E])}4$ maps each of the generators $g_{i,j}$ described in the proof of Lemma~\ref{lem:invariantcohomologysigma_I^4} to the class obtained by replacing $T_2$ by $T_2\wedge T_1$ in the expression, and then symmetrizing for the action of the group $G(\sigma^{(3)})$. This yields:
$$
g_{i,j}\longmapsto \frac23 \sum\limits_{{0\leq k,l\leq 2}}Q_{k+1}\wedge Q_{l+1}\otimes f_{2k+i}\wedge f_{2l+j},
$$
hence in particular the differential is surjective. From this the claim follows.

\qed

\section{Torus rank 4}

In this section we compute the cohomology of the closed strata $\beta_4 \subset \AVOR 4$ and 
$\beta_4^\Perf\subset\APERF 4$ 
of torus rank $4$
in the second Voronoi and the perfect cone compactification, respectively. 

We shall first state the main results:
\begin{thm}\label{Igusa-beta4}
The cohomology groups with rational coefficients of the closed stratum $\beta_4^\Perf\subset\APERF 4$ of the 
perfect cone compactification of the moduli space of abelian varieties of dimension $4$ are non-zero only in even degree.
The only non-zero Betti numbers are 
$b_0=b_2=b_4=1$, $b_6 = b_8=4$, $b_{10}=3$ and $b_{12}=1$.

The cohomology is algebraic in all degrees different from $6$, whereas $\coh{\beta_4^\Perf}6$ is an extension of $\Q(-3)^{\oplus3}$ by $\Q(-1)$. 
\end{thm}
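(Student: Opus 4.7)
The plan is to imitate the analysis of $\beta_3^0$ carried out in Section~\ref{s:beta_3}, but now with the base $\ab 0$ being a point, so that the entire stratum $\beta_4^\Perf$ is assembled purely out of finite quotients of algebraic tori. Specifically, $\beta_4^\Perf$ admits a stratification indexed by the $\GL(4,\Z)$-orbits of cones $\sigma$ in the perfect cone decomposition $\Sigma_4^\Perf$ whose relative interior consists entirely of rank $4$ positive semidefinite forms. For each such cone $\sigma$ of dimension $d$, the corresponding locally closed stratum $\beta_4^\Perf(\sigma)$ is the finite quotient $(T/T^\sigma)/G(\sigma)$, where $T = \Sym^2(\Z^4)\otimes \C^*$, $T^\sigma$ is the $d$-dimensional subtorus cut out by the characters dual to the generators of $\sigma$, and $G(\sigma)\subset \GL(4,\Z)$ is the stabilizer of $\sigma$.

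The first step is to enumerate representatives of the relevant $\GL(4,\Z)$-orbits. In genus~$4$ the top-dimensional perfect cone is the $10$-dimensional cone spanned by $\{x_i^2\}_{1\leq i\leq 4}$ and $\{(x_i-x_j)^2\}_{1\leq i<j\leq 4}$; the open stratum of $\beta_4^\Perf$ corresponds to the minimal cone $\langle x_1^2, x_2^2, x_3^2, x_4^2\rangle$, while the deepest closed stratum corresponds to the top cone itself and contributes a single point. Using the classification of perfect forms in dimension $4$ one lists all intermediate orbits together with their stabilizers $G(\sigma)\subset\GL(4,\Z)$.

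The second step is the cohomological computation on each stratum. Since $(T/T^\sigma)/G(\sigma)$ is a finite quotient of an algebraic torus, its rational cohomology coincides with the $G(\sigma)$-invariant part of $\bigwedge^\pu \coh{T/T^\sigma}{1}$, where $G(\sigma)$ acts linearly on the character lattice of $T/T^\sigma$ via the natural action of $\GL(4,\Z)$ on $\Sym^2(\Z^4)/\la\sigma\ra$. These invariants can be computed orbit by orbit (by hand for the small cones and with a computer algebra system for the larger ones), and yield pure Hodge--Tate structures $\Q(-k)$ in degree $2k$. Passing to cohomology with compact support uses Poincar\'e duality on the smooth finite covers.

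The final step is to assemble the pieces via the Gysin spectral sequence associated with the dimension filtration by $\bigcup_{\dim\sigma\leq j}\beta_4^\Perf(\sigma)$. Exactly as in the proof of Proposition~\ref{cohom_beta3}, the differentials $d_r$ are determined by replacing each generator of $\coh{T/T^\sigma}{1}$ by its image in $\coh{T/T^{\sigma'}}{1}$ for a face relation $\sigma\subset\sigma'$, symmetrized with respect to the appropriate stabilizer. Most differentials vanish for weight reasons (graded pieces of different Hodge weight cannot be related), leaving only a small, explicit set of maps to check. The extension $\Q(-1)\to\coh{\beta_4^\Perf}{6}\to\Q(-3)^{\oplus 3}$ should emerge precisely because the weight~$2$ class supported on some lower-dimensional toric stratum fails to be killed by any of these differentials.

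The main obstacle will be twofold: first, correctly enumerating the $\GL(4,\Z)$-orbits of faces of the genus~$4$ perfect cone (a non-trivial combinatorial problem, although thoroughly studied in the reduction theory literature), and second, carrying out the invariant-theoretic computations for each stabilizer $G(\sigma)$, some of which are relatively large finite groups. Once these are under control, the remaining spectral sequence argument is routine and parallels the torus rank~$3$ analysis of Section~\ref{s:beta_3}.
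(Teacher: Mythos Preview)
Your overall strategy coincides with the paper's: stratify $\beta_4^\Perf$ by the $\GL(4,\Z)$-orbits of perfect cones containing an interior rank~$4$ form, compute the $G(\sigma)$-invariant part of $\bigwedge^\pu\coh{T/T^\sigma}1$ for each stratum (the paper does this with Magma for the stabilizers and Singular for the invariants, using Vallentin's tables of cones), and run the Gysin spectral sequence for the dimension filtration. The paper then isolates exactly five differentials $\delta_0,\delta_0',\delta_1,\delta_2,\delta_3$ not forced to vanish by weight, and proves each has rank~$1$ by explicit comparison of stabilizers along a single face inclusion, just as you suggest.

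There is, however, a genuine factual gap in your enumeration step. You write that ``the top-dimensional perfect cone is the $10$-dimensional cone spanned by $\{x_i^2\}$ and $\{(x_i-x_j)^2\}$'' and that ``the deepest closed stratum corresponds to the top cone itself and contributes a single point''. In genus~$4$ this is false: there are \emph{two} $\GL(4,\Z)$-inequivalent maximal perfect cones, the principal cone $\Pi_1(4)$ you describe and the second perfect cone $\Pi_2(4)$ (with $12$ extremal rays, non-basic, giving the singular point of $\APERF4$). Consequently the deepest stratum consists of two points, not one, and among the $9$-dimensional faces there is an orbit ($K_{3,3}$, the ``red triangle'' face of $\Pi_2(4)$) that you would miss if you only took faces of $\Pi_1(4)$. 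Omitting $\Pi_2(4)$ and $K_{3,3}$ from Table~\ref{Igusa-strata} changes the $E_1$ term of the spectral sequence and would give incorrect Betti numbers. So before the invariant-theory step you must use the full classification of perfect cones in dimension~$4$ (two maximal cones and $18$ orbits of faces of rank~$4$ in total), not just the faces of the principal cone.
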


The closed stratum $\beta_4\subset\AVOR 4$ has two irreducible components: a nine-dimen\-sion\-al component $E$, which is the exceptional divisor of the blow-up $q\co \AVOR 4 \rightarrow \APERF 4$, and a six-dimensional component, which is the proper transform of $\beta_4^\Perf$ under $q$.

\begin{thm}\label{Voronoi-beta4}
\begin{enumerate}
\item \label{exceptional}
The rational cohomology of $E$ is all algebraic. The only non-zero Betti numbers are $b_0=b_{2}=b_{16}=b_{18}=1$, $b_4=b_{14}=2$ and $b_6=b_8=b_{10}=b_{12}=3$.
\item \label{all} The rational cohomology of $\beta_4$ is non-trivial only in even degree. The non-zero Betti numbers are
$$
\begin{array}{c|cccccccccc}
i&0&2&4&6&8&10&12&14&16&18\\\hline
b_i&1&2&3&7&7&6&4&2&1&1
\end{array}
$$
All cohomology groups are algebraic, with the exception of $\coh{\beta_4}6$, which is an extension of $\Q(-3)^{\oplus6}$ by $\Q(-1)$.
\end{enumerate}
\end{thm}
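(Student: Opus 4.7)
The plan is to write $\beta_4=E\cup\tilde B$ where $\tilde B$ is the proper transform of $\beta_4^\Perf$ under $q\co\AVOR 4\to\APERF 4$, and then to assemble $\coh{\beta_4}\pu$ via the Mayer--Vietoris long exact sequence
\begin{equation*}
\cdots\to \coh{\beta_4}{k}\to \coh{E}{k}\oplus\coh{\tilde B}{k}\to \coh{E\cap\tilde B}{k}\to \coh{\beta_4}{k+1}\to\cdots.
\end{equation*}
Here $\dim E=9$, $\dim\tilde B=\dim\beta_4^\Perf=6$, and $q|_{\tilde B}\co\tilde B\to\beta_4^\Perf$ is the blow-up of $\beta_4^\Perf$ at the single point over the $0$-dimensional cusp, whose exceptional divisor is precisely $E\cap\tilde B$.

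For part~(\ref{exceptional}), I would exploit the fact that $E$ is a finite quotient of a projective toric variety of dimension~$9$ arising from the refinement of the perfect cone decomposition by the second Voronoi decomposition of $\Sym^2(\R^4)$ over the $0$-dimensional cusp. The first step is a combinatorial one: enumerate all cones of the second Voronoi fan living over this cusp (extending the analysis of $\sigma^{(3)}$, $\sigma_I^{(4)}$, $\sigma_{II}^{(4)}$, $\sigma^{(5)}$, $\sigma^{(6)}$ from Section~\ref{s:beta_3} up to the full top-dimensional cones) and describe their $\GL(4,\Z)$-stabilizers. The standard formula for the rational cohomology of a projective simplicial toric variety then produces a pure Tate, purely algebraic description indexed by the cones, and passing to the finite-group invariants should reproduce the Betti numbers claimed.

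For part~(\ref{all}), the identification of $q|_{\tilde B}$ with a blow-up of a point tells us that $E\cap\tilde B$ has the rational cohomology $\Q[h]/(h^6)$ of $\Pp 5$ (with $h$ of weight~$2$), and the blow-up formula gives
\begin{equation*}
\coh{\tilde B}{2j}=\coh{\beta_4^\Perf}{2j}\oplus\Q(-j)\quad(1\leq j\leq 5),
\end{equation*}
while $\coh{\tilde B}{k}$ agrees with $\coh{\beta_4^\Perf}{k}$ in all remaining degrees. Combined with Theorem~\ref{Igusa-beta4}, this determines $\coh{\tilde B}\pu$ and transports the extension class in $\coh{\beta_4^\Perf}{6}$ to an extension $\Q(-3)^{\oplus 4}+\Q(-1)$ in $\coh{\tilde B}{6}$. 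Plugging all three inputs into the Mayer--Vietoris sequence, verifying that $\coh{E}{k}\oplus\coh{\tilde B}{k}\to \coh{E\cap\tilde B}{k}$ is surjective in every degree (which one expects because the hyperplane class of $E\cap\tilde B\cong\Pp 5$ extends to both $E$ and $\tilde B$, and generates the target as a ring via its powers), and tracking Hodge weights yields the Betti numbers and the extension class in $\coh{\beta_4}{6}$ as claimed; in particular, since the map $\coh{\tilde B}{6}\to\coh{E\cap\tilde B}{6}=\Q(-3)$ is a morphism of mixed Hodge structures, the $\Q(-1)$ summand automatically lies in its kernel and therefore persists in $\coh{\beta_4}{6}$.

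The main obstacle is the combinatorial task underlying part~(\ref{exceptional}): one must give a complete list of the top-dimensional cones of the second Voronoi fan in genus~$4$ over the deepest cusp together with their stabilizers, which is considerably more intricate than what is needed in Section~\ref{s:beta_3}. Once this is in hand, the remaining steps reduce to routine Hodge-theoretic and toric bookkeeping.
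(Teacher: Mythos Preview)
Your approach to part~\eqref{exceptional} is essentially the paper's: both enumerate the $\GL(4,\Z)$-orbits of second Voronoi cones containing the central ray $\langle e\rangle$, compute the stabiliser-invariant cohomology of each torus orbit, and assemble these. The paper packages this slightly differently, first proving purity of $\coh{E}{\pu}$ (Lemma~\ref{purity}) and then simply adding up Hodge Euler characteristics of the strata; this avoids having to analyse any differentials.

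For part~\eqref{all} your route is genuinely different, and there is a gap. The paper uses the Gysin sequence for the closed inclusion $E\subset\beta_4$, observing that the open complement $\beta_4\setminus E$ is isomorphic to $\beta_4^\Perf\setminus\{P_{\sing}\}$; since both $E$ and $\beta_4^\Perf\setminus\{P_{\sing}\}$ have vanishing odd compactly supported cohomology, every differential in the Gysin sequence is zero and the result drops out immediately. Your Mayer--Vietoris argument instead requires knowing $\coh{\tilde B}{\pu}$ and $\coh{E\cap\tilde B}{\pu}$, and here you invoke the blow-up formula for a smooth point together with $E\cap\tilde B\simeq_\Q\Pp5$. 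But $P_{\sing}$ is a genuine singularity of $\APERF 4$: it corresponds to the non-basic cone $\Pi_2(4)$, which has $12$ extremal rays in a $10$-dimensional space. There is no a~priori reason why the restricted blow-up $\tilde B\to\beta_4^\Perf$ should behave rationally like the blow-up of a smooth point of a smooth sixfold, nor why its exceptional fibre should have the cohomology of $\Pp5$. The fact that the numbers happen to match is suggestive but not a proof; you would need an independent computation of $\coh{E\cap\tilde B}{\pu}$ (for instance by enumerating the relevant cones, which is again substantial combinatorics). The paper's open--closed decomposition sidesteps this entirely: it never needs to identify $\tilde B$ or $E\cap\tilde B$ at all.
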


\subsection{Cone decompositions}

It is in this section that we require full information about the perfect cone or first Voronoi and the second
Voronoi decomposition in $\Sym_{\geq0}^2(\R^4)$. 
Details concerning these decompositions
can be found in \cite{ER1}, \cite{ER2}, \cite{Vallentin} and \cite{V2b}. We start by recalling the perfect cone
decomposition. The starting point is two $10$-dimensional cones, namely
the principal cone $\Pi_1(4)$ and the second perfect cone $\Pi_2(4)$. These cones are given by
$$
\begin{array}{l@{\;}l}
\Pi_1(4)=&\langle x_1^2, x_2^2, x_3^2, x_4^2,(x_1-x_2)^2,(x_1-x_3)^2,(x_1-x_4)^2,(x_2-x_3)^2,\\
&(x_2-x_4)^2,(x_3-x_4)^2 \rangle
\end{array}
$$
and
$$
\begin{array}{l@{\;}l}
\Pi_2(4)=&\langle x_1^2, x_2^2, x_3^2, x_4^2, (x_1-x_3)^2, (x_1-x_4)^2, (x_2-x_3)^2, (x_2-x_4)^2,\\
&(x_3-x_4)^2, (x_1+x_2-x_3)^2, (x_1+x_2-x_4)^2, (x_1+x_2-x_3-x_4)^2\rangle\\
\end{array}
$$
respectively. The perfect cone decomposition consists of all $\GL(4,\Z)$-translates of these cones and their
faces. While the cone $\Pi_1(4)$ is basic, the cone $\Pi_2(4)$ is not, hence it defines a singular 
point $P_{\sing}\in\APERF 4$. Nevertheless, all $9$-dimensional faces of $\Pi_2(4)$ are basic. 
Modulo the action of $\GL(4,\Z)$ these $9$-dimensional faces define two orbits. Traditionally these are called 
RT (red triangle) and BF (black face) respectively (see \cite{ER2}).

In genus $4$ and $5$ (but not in general) the second Voronoi decomposition is a subdivision of the perfect cone 
decomposition. In our case it is the refinement of the perfect cone decomposition obtained 
by adding all cones that arise as spans of the $9$-dimensional faces of $\Pi_2(4)$ with the central ray generated by 
\begin{multline}\label{ray}
e=\frac 13\left[
x_1^2+x_2^2+x_3^2+x_4^2+(x_1-x_3)^2+(x_1-x_4)^2+(x_2-x_3)^2+(x_2-x_4)^2\right.\\
\left.+(x_3-x_4)^2+(x_1+x_2-x_3)^2+(x_1+x_2-x_4)^2+(x_1+x_2-x_3-x_4)^2\right].
\end{multline}
In particular, all perfect cones, with the exception of $\Pi_2(4)$, belong to the second Voronoi decomposition. 
Geometrically this means that $\AVOR 4$ is a blow-up of $\APERF 4 = \AIGU 4$ in the singular point $P_{\sing}$. 
Since all cones on the second Voronoi decomposition are basic $\AVOR 4$ is smooth (as a stack). Moreover,
the exceptional divisor $E$ is irreducible and smooth (again as a stack). (For a discussion of this see also
\cite{HS2}.
 
A description of representatives of all $\GL(4,\Z)$-orbits of cones in the second Voronoi, and hence also the 
perfect cone decomposition,
can be found in \cite[Chapter 4]{Vallentin}. For cones with extremal rays spanned by quadratic forms of rank $1$ the list 
is given in \cite[S.4.4.4]{Vallentin}.
Note that in this list $K_5$ denotes the cone $\Pi_2(4)$, and the $9$-dimensional cones $K_5-1$ and $K_{3,3}$ 
correspond to the equivalence classes BF, respectively, RT of \cite{ER2}. 
The remaining cones are listed in  \cite[S.4.4.5]{Vallentin}. The following list gives the number of  
$\GL(4,\Z)$-orbits of cones in each dimension for the two decompositions.
$$
\begin{array}{l|cccccccccc}
\operatorname{dimension}&1&2&3&4&5&6&7&8&9&10\\\hline
\# \, {\operatorname {perfect \, cones}}&1&1&2&3&4&5&4&2&2&2\\\hline
\# \, {\operatorname {second \, Voronoi \, cones}}&2&2&4&7&9&11&11&7&4&3 \\
\end{array}
$$
From this we see that the perfect cone decomposition has $26$ different cones, whereas the second Voronoi decomposition has
$60$ different cones. The lists in \cite{Vallentin} also allow us to write down generators for the extremal rays of 
representatives in all cases.
 
\subsection{Plan for computation}

We briefly recall the structure of $\beta_4$ and $\beta_4^\Perf$ which comes from the toroidal construction. 
More generally, let $\beta_4^{\Sigma}$ be the stratum of any 
admissible fan $\Sigma$ (in our case either the
perfect cone or the second Voronoi fan), then each cone $\sigma \in  \Sigma$ defines a torus orbit $T_{\sigma}$
of dimension $10-k$ where $k$ is the dimension of $\sigma$. Let $G_\sigma\subset \GL(4,\Z)$ be the stabilizer of $\sigma$
with respect to the natural action of $\GL(4,\Z)$ on $\Sym_{\geq0}^2(\R^4)$. Then  $G_\sigma$ acts on $T_{\sigma}$
and $\beta_4^{\Sigma}$ is the disjoint union of the quotients $Z_{\sigma}=T_{\sigma} / G_\sigma$ where $\sigma$ 
runs through a set of representatives of all cones in $\Sigma$ which contain a form of rank $4$ in their interior. 
We then define a stratification by defining $S_p$ as the union of all $Z_{\sigma}$ where $\dim\sigma \geq 10-p$.
In particular, $S_p \setminus S_{p-1}$ is the union of all $Z_{\sigma}$ with $\dim\sigma = 10-p$. 

The Gysin spectral sequence $E_\pu^{p,q}\Rightarrow \cohc{\beta_4^\Perf}{p+q}=\coh{\beta_4^\Perf}{p+q}$ associated with the filtration $S_p$ has $E_1$ term given by
$$
E_1^{p,q}=\cohc{S_p\setminus S_{p-1}}{p+q}.
$$
Since $S_p \setminus S_{p-1}$ is the disjoint union of the $Z_\sigma$ with $\dim\sigma =10-p$ it follows that
$$
\cohc{S_p\setminus S_{p-1}}{\pu} = \bigoplus_{\dim\sigma}\cohc{Z_\sigma}{\pu}.
$$ 

In our situation we have considerably more information. In particular we know that, with the exception of $\Pi_2(4)$, all cones
in
both the perfect cone and the second Voronoi decomposition, are basic. In particular all strata $S_p$ with $p \leq 9$
are locally quotients of a smooth variety by a finite group.
Moreover
$T_\sigma =(\C^*)^{10-\dim\sigma}$ and
$Z_\sigma =(\C^*)^{10-\dim\sigma}/G_{\sigma}$. The torus orbit of $\Pi_2(4)$ is a point. Thus we have to compute 
for each cone $\sigma$ the cohomology of the torus $T_{\sigma}$ with respect to $G_{\sigma}$. 
Recall that $\coh {(\C^*)^k}\pu$ is the exterior algebra generated by the $k$-dimensional vector space 
$\coh {(\C^*)^k}1$. Moreover, a basis of the vector space $\coh {(\C^*)^k}1$ can be obtained by taking the Alexander 
dual classes of the fundamental classes of the components 
$$\{(y_1,\dots,y_k)| y_i=0\},\ i=1,\dots,k$$
of the complement of $(\C^*)^k$ in $\C^k$.
This means that, once the generators of the cone $\sigma$ and of the group $G_\sigma$ are known, the computation of 
the cohomology of $Z_\sigma$ reduces to a linear algebra problem, which can be solved using computational tools. 
In our case, the generators of the stabilizers $G_\sigma$ were calculated with Magma (\cite{Magma}) and the 
invariant part of the algebra $\bigwedge^\pu\coh{(\C^*)^k}1$ with Singular (\cite{GPS09}).

\subsection{Perfect cones}

We shall now perform the programme outlined above for the perfect cone compactification, which coincides with the 
Igusa compactification in genus $4$. We have already mentioned that a list of representatives of 
all cones in the perfect cone decomposition, together with their generators, can be found in \cite[Ch.~4]{Vallentin}.
This enables us to compute the stabilizer groups $G_{\sigma}$ as well as the invariant cohomology of the torus
orbits $T_{\sigma}=(\C^*)^k$ where $k=10 - \dim\sigma$.
The results so obtained are listed in Table~\ref{Igusa-strata}, where the notation for the cones is the one 
of \cite[\S4]{Vallentin}. %
The information on the cohomology of the strata is given in the form of Hodge Euler characteristics, i.e. what is given is
the Euler characteristic of $\cohc{Z_\sigma}\pu$ in the Grothendieck group of Hodge structures. 
The symbol $\Ll$ denotes the class of the weight 2 Tate Hodge structure $\Q(-1)$ in the Grothendieck group. 
The relationship between cohomology and cohomology with compact support is given by Poincar\'e duality:
$$\cohc {Z_\sigma}l = \Hom(\coh{Z_\sigma}{2k-l},\Q(-k)),$$
which holds since the $Z_\sigma$ are finite quotients of the smooth varieties $T_\sigma$.

\begin{table}
\caption{\label{Igusa-strata}$\GL(4,\Z)$-orbits of perfect cones}
\begin{tabular}{ll}
$\begin{array}{l@{}cl}
\Sigma & \dim\Sigma & \eul{Z_{\Sigma}}\\[3pt]
K_5=\Pi_1(4) &10&1\\
\Pi_2(4) &10&1\\
K_5-1&9&\Ll\\
K_{3,3}&9&\Ll-1\\
K_5-2&8&\Ll^2\\
K_5-1-1&8&\Ll^2-\Ll\\
K_5-2-1&7&\Ll^3-\Ll^2\\
C_{2221}&7&\Ll^3\\
K_5-3&7&\Ll^3\\
\end{array}$
&
$\begin{array}{l@{}cl}
\Sigma & \dim\Sigma & \eul{Z_\Sigma}\\[3pt]
K_4+1&7&\Ll^3\\
C_{222}&6&\Ll^4-\Ll^3\\
C_{321}&6&\Ll^4+1\\
C_{221}+1&6&\Ll^4\\
C_3+C_3&6&\Ll^4\\
C_5&5&\Ll^5-1\\
C_4+1&5&\Ll^5\\
C_3+1+1&5&\Ll^5+\Ll\\
{1+1+1+1} &4&\Ll^6\\
\end{array}$
\end{tabular}
\end{table}

In view of the information on the cohomology of the $Z_\sigma$ given in Table~\ref{Igusa-strata}, this yields that the $E_1$ terms of the spectral sequence $E_\pu^{p,q}\Rightarrow \coh{\beta_4^\Perf}{p+q}$ are as shown in Table~\ref{Igusa-specseq}. 

\begin{table}
\caption{\label{Igusa-specseq}$E_1$ term of the spectral sequence converging to $\coh{\beta_4^\Perf}\pu$.}
$$
\begin{array}{r|cccccccc}
q&&&&&&&\\[6pt]
6&0&0&0&0&0&0&\Q(-6)\\
5&0&0&0&0&0&\Q(-5)^{\oplus3}\mil\mil&0\\
4&0&0&0&0&\Q(-4)^{\oplus4}\mil\mil&0&0\\
3&0&0&0&\Q(-3)^{\oplus4}\mil\mil&\Q(-3)&0&0\\
2&0&0&\Q(-2)^{\oplus2}\mil\mil&\Q(-2)&0&0&0\\
1&0&\Q(-1)^{\oplus2}\mil\mil&\Q(-1)&0&0&\Q(-1)&0\\
0&\Q^{\oplus2}&\Q&0&0&\Q&\Q&0\\\hline
&0&1&2&3&4&5&6&p
\end{array}
$$
\end{table}

To establish Theorem~\ref{Igusa-beta4}, we need to determine the rank of all differentials in the spectral sequence. As morphisms between pure Hodge structures of different weights are necessarily trivial, one remains with five differentials to investigate, all of the form $d_1^{p,q}\co E_1^{p,q}\rightarrow E_1^{p+1,q}$. We will denote them by 
$$
\begin{array}{lll}
\delta_0\co E_1^{0,0}\longrightarrow E_1^{1,0},&
\delta_0'\co E_1^{4,0}\longrightarrow E_1^{5,0},\\
\delta_1\co E_1^{1,1}\longrightarrow E_1^{2,1},&
\delta_2\co E_1^{2,2}\longrightarrow E_1^{3,2},&
\delta_3\co E_1^{3,3}\longrightarrow E_1^{4,3}.\\
\end{array}
$$

\begin{lem}
All the differentials $\delta_0,\delta_0',\delta_1,\delta_2$ and $\delta_3$ have rank $1$. 
\end{lem}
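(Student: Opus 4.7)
The strategy is twofold: a connectedness argument handles $\delta_0$, and explicit Gysin calculations handle the remaining four. Since each of the five differentials maps into a one-dimensional target, it suffices to exhibit in each case a single class in the source whose image is non-zero.

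For $\delta_0$, the two summands of $E_1^{0,0}$ correspond to the two point strata $Z_{K_5}$ and $Z_{\Pi_2(4)}$. The nine-dimensional cone $K_5-1$ is a common face of both $K_5$ and $\Pi_2(4)$, so the closure in $\beta_4^\Perf$ of the associated one-dimensional stratum $Z_{K_5-1}$ contains both points. Hence $\beta_4^\Perf$ is connected, $\dim H^0(\beta_4^\Perf;\Q)=1$, and $\delta_0$ must be surjective.

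For the remaining four differentials the plan is to proceed by direct calculation, in exactly the spirit of Sections~4--6. Each $E_1^{p,q}$ is the $G_\sigma$-invariant subspace of the exterior algebra on $H^1(T_\sigma;\Q)$ for a specific cone $\sigma$ of codimension $p$, and for each codim-one face inclusion $\sigma\subset\tau$ the corresponding boundary map is given by cup product with the first Chern class of the one-parameter sub-$\C^*$-bundle $T_\sigma/T_\tau$, followed by passage to $G_\tau$-invariants. In each of the four cases the target is a single $\Q(-k)$ coming from one cone, so one has only to identify the face relations linking that cone to each of the source cones, write down one explicit generator of an invariant class in the source, and verify by direct substitution that its image is non-zero.

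The main obstacle is the combinatorial bookkeeping: for each target cone, one must enumerate its codim-one overcones up to the $\GL(4,\Z)$-action, record the explicit stabilizers, and evaluate the averaged cup-product map on invariant classes. These are exactly the data already assembled from \cite[Chapter~4]{Vallentin} (for the face structure) and from the Magma/Singular computations underlying Table~\ref{Igusa-specseq} (for the stabilizer actions and the explicit descriptions of the invariant cohomology). Conceptually no input beyond Sections~4--6 is required; the argument is a lemma-by-lemma unpacking of Gysin boundary maps strictly analogous to, but simpler than, the torus-bundle computations of Lemmas~\ref{firststep} and \ref{lem:invariantcohomologysigma_I^4}.
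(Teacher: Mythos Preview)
Your treatment of $\delta_0$ is correct and matches the paper's argument.

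For $\delta_0',\delta_1,\delta_2,\delta_3$, however, what you have written is a plan, not a proof. You correctly identify that each target is one-dimensional and that it suffices to exhibit one class with non-zero image, and you correctly say that the boundary maps are Gysin differentials coming from codimension-one face inclusions. But you then defer the actual computation to ``combinatorial bookkeeping'' that you do not carry out. Since the content of the lemma \emph{is} precisely this bookkeeping, the proposal as written does not establish the claim.

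There is also a subtle point your outline skips over. For a face inclusion $\tau\subset\sigma$ the stabilizers $G_\sigma$ and $G_\tau$ need not be comparable, so the passage ``followed by passage to $G_\tau$-invariants'' is not automatic: one must explain how a map between $G_\sigma$-invariants and $G_\tau$-invariants arises. The paper handles this by choosing, for each of the four differentials, a specific representative of the smaller cone (e.g.\ $K_5{-}1{-}1b$ rather than $K_5{-}1{-}1$) so that its stabilizer is actually \emph{contained} in the stabilizer of the larger cone, and checks that the invariant subspaces in the relevant degree coincide. With this in hand, the differential is identified with the standard Gysin map $H^k_c((\C^*)^{n-1})\to H^{k+1}_c((\C^*)^n)$ for the inclusion $\{0\}\times(\C^*)^{n-1}\hookrightarrow\C\times(\C^*)^{n-1}$, which is an isomorphism in the required degree without any Chern-class computation. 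This is both simpler than and conceptually different from the ``cup with $c_1$ and symmetrize'' approach you describe; the latter could be made to work but would require the explicit invariant-theory calculation you have not supplied.
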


\begin{proof}
Since $\beta_4^\Perf$ is connected, one has $\coh{\beta_4^\Perf}0=\Q$. This implies that $\delta_0$ has rank $1$.

Next, we consider the differential $\delta_0'\co E_1^{4,0}\cong \Q\longrightarrow E_1^{5,0}\cong \Q$. 
From the description of the strata given in Table~\ref{Igusa-strata}, we have $E_1^{4,0}=\cohc {Z_{C_{321}}}4$ 
and $E_1^{5,0}=\cohc {Z_{C_5}}5$ for the cones
$$\begin{array}{l}
C_{5}=\langle x_1^2, x_2^2. (x_1-x_4)^2, (x_2-x_3)^2, (x_3-x_4)^2\rangle,\\
C_{321}=\langle x_1^2, x_2^2, x_4^2, (x_1-x_4)^2, (x_2-x_3)^2, (x_3-x_4)^2 \rangle.
\end{array}
$$

The cone $C_5$ is contained in $C_{321}$, hence   $Z_{C_{321}}$ is contained in the closure of $Z_{C_5}$. 
Furthermore, the rank of $\delta_0'$ must coincide with the rank of the differential 
$\eta_0\co \cohc {Z_{C_{321}}}4\rightarrow \cohc {Z_{C_{5  }}}5$ of the Gysin long exact sequence associated with 
the inclusion of $Z_{C_{321}}$ in the partial compactification $Z_{C_5}\cup Z_{C_{321}}$ of $Z_{C_5}$. 

If one considers the stabilizers, one observes that $G_{C_{321}}$ is a subgroup of $G_{C_5}$. Therefore, one can view $\eta_0$ as a map from the cohomology of $(\C^*)^4$ to the cohomology of $(\C^*)^5$ in the following way:
$$
\xymatrix@C=6pt@R=12pt{
{\cohc {Z_{C_{321}}}4} \ar@{=}[r]\ar@{->}[d]^{\eta_0} 
& {\cohc {(\C^*)^4}4^{G_{C_{321}}}} \ar@{->}[dr]^{\eta_0}
\\
{\cohc{Z_{C_{5  }}}5}\ar@{=}[r]
&
{\cohc {(\C^*)^5}5^{G_{C_{5}}}}\ar@{=}[r]
&
{\cohc {(\C^*)^5}5^{G_{C_{321}}},}
}
$$
where we used the fact that the $G_{C_{321}}$-invariant part of $\cohc {(\C^*)^5}5$ coincides 
with the $G_{C_5}$-invariant part. This new interpretation relates the map $\eta_0$ to the differential
\begin{equation}\label{diff_delta'}
\cohc {(\C^*)^4}4\cong\Q\longrightarrow \cohc {(\C^*)^5}5\cong\Q
\end{equation}
 of the Gysin exact sequence of an inclusion $(\C^*)^4\hookrightarrow \C\times (\C^*)^4$, with complement 
isomorphic to $(\C^*)^5$. 
In particular, since $\cohc{\C\times (\C^*)^4}k$ vanishes for $k\leq 5$, the differential \eqref{diff_delta'} 
is an isomorphism, and the same holds for $\eta_0$.

Let us consider the differential 
$$\delta_1\co E_1^{1,1}\cong\cohc{Z_{K_5-1}}2\oplus\cohc{Z_{K_{3,3}}}2\rightarrow E_1^{2,1}\cong \cohc{Z_{K_5-1-1}}3.$$
Note that  both $Z_{K_5-1}$ and $Z_{K_{3,3}}$ are contained in the closure
of $Z_{K_5-1-1}\subset\beta_4^\Perf$. 
We choose to investigate the inclusion $i_{3,3}$ of $Z_{K_{3,3}}$ in  the partial compactification 
$Z_{K_{3,3}}\cup Z_{K_5-1-1}$ of $Z_{K_5-1-1}$. 
Then the rank of $\delta_1$ cannot be smaller than the rank of the differential
$$\eta_1\co \cohc{Z_{K_{3,3}}}2\longrightarrow \cohc{Z_{K_5-1-1}}3$$
in the Gysin long exact sequence associated with $i_{3,3}$, even though there is no canonical isomorphism between the kernel of $\eta_1$ and that of $\delta_1$.

In Vallentin's notation, the cone $K_{3,3}$ is given by
$$\begin{array}{ll}
K_{3,3}=&\langle x_1^2, x_2^2, x_3^2, x_4^3, (x_1-x_3)^2, (x_1-x_4)^2, (x_2-x_3)^2, (x_2-x_4)^2, \\
&(x_1+x_2-x_3-x_4)^2 \rangle.
\end{array}
$$
In particular, its subcone
$$K_{5}-1-1b=  \langle x_1^2, x_2^2, x_3^2, x_4^2, (x_1-x_3)^2, (x_1-x_4)^2, (x_2-x_3)^2, (x_2-x_4)^2 \rangle $$
belongs to the same $\GL(4,\Z)$-orbit as $K_{5}-1-1$, so that $Z_{K_{5}-1-1b}\subset\beta_4^\Perf$ 
coincides with $Z_{K_{5}-1-1}$. The stabilizer $G_{K_{5}-1-1b}$ of the cone $K_{5}-1-1b$ is generated by 
$-{\operatorname {Id}}_{\Z^4}$ and by the two automorphisms
$$\left\{\begin{array}{l}
x_1 \mapsto x_3\\
x_2 \mapsto x_4\\
x_3 \mapsto x_2\\
x_4 \mapsto x_1
\end{array}\right.
\text{ and }
\left\{\begin{array}{l}
x_1 \mapsto x_1\\
x_2 \mapsto x_2\\
x_3 \mapsto x_4\\
x_4 \mapsto x_3
\end{array}\right..
$$

In particular, one can check that the group $G_{K_{5}-1-1b}$ is contained in the stabilizer $G_{K_{3,3}}$ of the cone $K_{3,3}$. 

Analogously to the case of $\eta_0$, we can reduce the study of $\eta_1$ to the study of the long exact sequence of an inclusion  $\C^*\hookrightarrow \C\times\C^*$ with complement isomorphic to $(\C^*)^2$, by exploiting the diagram

$$
\xymatrix@C=2pt@R=12pt{
{\cohc {Z_{K_{3,3}}}2} \ar@{->}[d]^{\eta_1} 
\ar@{=}[r]
& {\cohc {\C^*}2^{G_{K_{3,3}}}}
\ar@{=}[r]
& {\cohc {\C^*}2^{G_{K_5-1-1b}}} \ar@<-2ex>[d]^{\eta_1}
\\
{\cohc{Z_{K_{5  }-1-1b}}3}
\ar@{=}[rr]
&&
{\cohc {(\C^*)^2}3^{G_{K_5-1-1b}}.}%
}
$$

Then the claim follows from the fact that the  differential $\cohc{\C^*}2\cong \Q(-1)\rightarrow\cohc{(\C^*)^2}3$ in the Gysin long exact sequence associated with the inclusion $\C^*\hookrightarrow \C\times\C^*$ has rank $1$.

The proof for $\delta_2$ and $\delta_3$ is completely analogous to that for $\delta_1$. In the 
case of $\delta_2$ one considers the inclusion of the $2$-dimensional stratum $Z_{K_5-2}$ in the $3$-dimensional 
stratum $Z_{K_5-2-1}$, given by the inclusion of the cone 
$$
K_5-2-1b = \langle x_1^2, x_2^2, x_3^2, (x_1-x_4)^2, (x_2-x_3)^2, (x_2-x_4)^2, (x_3-x_4)^2 \rangle,$$
which lies in the same $\GL(4,\Z)$-orbit as $K_5-2-1$, in 
$$
K_5-2 = \langle x_1^2, x_2^2, x_3^2, x_4^2, (x_1-x_4)^2, (x_2-x_3)^2, (x_2-x_4)^2, (x_3-x_4)^2\rangle.
$$
In this case, the stabilizers of $K_5-2$ and of $K_5-2-1b$ coincide as subgroups of $\GL(4,\Z)$.

In the case of $\delta_3$, one considers the inclusion of the $3$-dimensional stratum $Z_{C_{2221}}$ in the 
$4$-dimensional stratum $Z_{C_{222}}$, given by the inclusion of the cone 
$$
C_{222}= \langle x_1^2, x_2^2, x_3^2, (x_1-x_4)^2, (x_2-x_4)^2, (x_3-x_4)^2 \rangle
$$
in
$$
C_{2221} = \langle x_1^2, x_2^2, x_3^2, x_4^2, (x_1-x_4)^2, (x_2-x_4)^2, (x_3-x_4)^2 \rangle .
$$
Again, the stabilizers of $C_{222}$ and $C_{2221}$ coincide.
\end{proof}

\subsection{Cones containing $e$}

We shall now prove Theorem~\ref{Voronoi-beta4}. 

\begin{proof}[Proof of \eqref{exceptional}$\Rightarrow$\eqref{all} in Theorem~\ref{Voronoi-beta4}]
Assume that the cohomology with compact support of the exceptional divisor $E$ of the 
blow-up $\AVOR 4 \rightarrow \APERF 4$ is as stated in \eqref{exceptional}. 
The Gysin long exact sequence associated with the closed inclusion $E\subset\beta_4$ is as follows: 
\begin{equation}\label{les_exceptional}
\cdots\rightarrow 
\cohc E{k-1}\xrightarrow{d_k}
\cohc{\beta_4 \setminus E}k
\rightarrow \cohc{\beta_4}k
\rightarrow \cohc{E}k
\rightarrow \cdots
\end{equation}
Similarly the Gysin sequence of the pair $\{P_\sing\} \subset \beta_4^\Perf$ reads
$$%
\cdots\rightarrow 
\cohc {P_{\sing}}{k-1}
\rightarrow \cohc{\beta_4^\Perf\setminus \{P_\sing\}}{k}
\rightarrow
$$
$$
\rightarrow \cohc{{\beta_4}^{\Perf}}k
\rightarrow \cohc {P_{\sing}}{k}
\rightarrow \cdots
$$
Since $\AVOR 4\rightarrow \APERF 4$ is an isomorphism outside $E$, the complement $\beta_4\setminus E$ is 
isomorphic to $\beta_4^\Perf\setminus\{P_\sing\}$.  
By Theorem \ref{Igusa-beta4} the odd cohomology with compact support of $\beta_4^\Perf$ vanishes and
hence $\cohc{\beta_4^\Perf\setminus\{P_\sing\}}k=\cohc{\beta_4 \setminus E}k=0$ for odd $k \geq 3$.
Moreover, $\cohc{\beta_4^\Perf\setminus\{P_\sing\}}1=\cohc{\beta_4 \setminus E}1=0$ since 
$P_\sing$ is a point and $\beta_4^\Perf$ is compact (which implies that cohomology with compact support and 
ordinary cohomology coincide).

Furthermore by the description of $\cohc E\pu$ from \eqref{exceptional} we know that all odd cohomology of $E$ vanishes.
This ensures that all differentials $d_k, k\geq 1$ are zero.
This implies that the Betti numbers $b_k$ of $\beta_4$ with $k\geq 1$ are as stated in Theorem~\ref{Voronoi-beta4}. 
Also the description of the Hodge structures follows from Theorem~\ref{Igusa-beta4} and from part~\eqref{exceptional} 
in view of the long exact sequence \eqref{les_exceptional}.
Finally, the fact that $\cohc{\beta_4}0$ is one-dim\-en\-sion\-al follow from the connectedness of $\beta_4$. 
To complete the proof, recall that $\beta_4$ is compact, so that cohomology and cohomology with compact support agree.
\end{proof}

\begin{lem}\label{purity}
For every $k$, the cohomology group $\coh Ek$ carries a pure Hodge structure of weight $k$.
\end{lem}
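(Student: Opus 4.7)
The plan is to show that $E$ is a smooth and proper Deligne--Mumford stack that is globally a finite quotient of a smooth projective variety; purity of $\coh Ek$ will then follow from classical Hodge theory for the smooth projective cover by passing to invariants under the finite group action.

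For smoothness, I would observe that $\AVOR 4$ is smooth as a stack, since every cone in the second Voronoi decomposition in genus~$4$ is basic. The exceptional divisor $E$ is the toric divisor associated with the ray $\R_{\geq 0}\,e$ added in \eqref{ray}; its local structure, modulo the stabilizer of this ray in $\GL(4,\Z)$, is the toric variety determined by the star of $\R_{\geq 0}\,e$ in the second Voronoi fan. The cones in this star are precisely the joins of $\R_{\geq 0}\,e$ with the faces of the nine-dimensional faces of $\Pi_2(4)$, and all of these are basic. Properness is automatic, since $E$ is closed in the proper stack $\AVOR 4$.

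Next, I would invoke the fact already used in the proof of Theorem~\ref{mainthm} that $\AVOR 4$ is globally the quotient of a smooth proper variety $W$ by a finite group $G$; concretely, one may take $W=\AVOR 4(n)$ for $n\geq 3$ and $G=\Sp(8,\Z/n\Z)$, in which case $W$ is moreover projective. The preimage $\widetilde E\subset W$ of $E$ is then a smooth projective variety, so that $\coh{\widetilde E}{k}$ carries a pure Hodge structure of weight~$k$ for every~$k$ by classical Hodge theory. Since rational cohomology is insensitive to finite group quotients, we have
\[
\coh{E}{k}=\coh{\widetilde E}{k}^G,
\]
which is a sub-Hodge structure of a pure weight~$k$ Hodge structure, and is therefore itself pure of weight~$k$.

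The only non-routine ingredient is the smoothness of $E$, but this is essentially a combinatorial check on the star of $\R_{\geq 0}\,e$ that is built into Vallentin's description of the second Voronoi decomposition in genus~$4$ recalled earlier in the paper. Once smoothness is granted, the remaining argument is a standard application of Hodge theory for smooth projective varieties, so I do not expect any genuine obstacle.
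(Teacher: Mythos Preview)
Your proposal is correct and follows essentially the same route as the paper: pass to the level cover $\AVOR 4(n)$ for $n\geq 3$, observe that the preimage of $E$ there is smooth projective because the star of $\langle e\rangle$ consists of basic cones, invoke classical Hodge theory on the cover, and then descend to $E$ using that rational cohomology of a finite quotient is the invariants. The paper phrases the descent via injectivity of the pullback map rather than the identity $\coh Ek=\coh{\widetilde E}k^G$, and notes explicitly that $\pi(n)^{-1}(E)$ is a disjoint union of smooth components rather than a single variety, but these are cosmetic differences.
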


\begin{proof}
To prove the claim, we consider the second Voronoi compactification $\AVOR 4(n)$ of the moduli space of 
principally polarized abelian fourfolds with a level-$n$ structure ($n\geq 3$). 
Recall that $\AVOR 4(n)$ is a smooth projective scheme and that the map $\pi(n)\co \AVOR 4(n)\rightarrow \AVOR 4$ 
is a finite group quotient. The preimage $\pi(n)^{-1}(E)$ of $E$ is the union of finitely many irreducible components, 
all of which are smooth and pairwise disjoint. 
This follows from the toric description, since these components are themselves
toric varieties given by the star $\operatorname{Star}(\langle e \rangle)$ in the lattice 
$\Sym^2(\Z^4)/\Z e$.

In particular, this implies that the Hodge structures on the cohomology groups of $\pi(n)^{-1}(E)$ are 
pure of weight equal to the degree.
As $\pi(n)$ is finite, the pull-back map 
$$\pi(n)|_{\pi(n)^{-1}(E)}^*\co \coh Ek \rightarrow \coh {\pi(n)^{-1}(E)}k$$
 is injective. This implies that  each cohomology group $\coh Ek$ is a Hodge substructure of $\coh{\pi(n)^{-1}(E)}k$, thus yielding the claim.   
\end{proof}

\begin{table}
\caption{\label{Voronoi-strata1}$\GL(4,\Z)$-orbits of cones of dimension $\geq 6$ containing $e$}

\begin{tabular}{ll}
$\begin{array}{l@{}cl}
\sigma & \dim\sigma & \eul{Z_{\sigma}}\\[3pt]
111+      &10&1\\
111-&10&1\\

211+      &9&\Ll-1\\
211- &9&\Ll\\
311+      &8&\Ll^2-\Ll\\
311- &8&\Ll^2\\
22'1      &8&\Ll^2-\Ll\\
221+      &8&\Ll^2\\
221-&8&\Ll^2+\Ll\\
411&7&\Ll^3-\Ll^2\\
321+&7&\Ll^3-\Ll^2+\Ll-1
\end{array}$ &
$\begin{array}{l@{}cl}
\sigma & \dim\sigma & \eul{Z_{\sigma}}\\[3pt]
321-&7&\Ll^3-2\Ll^2+\Ll\\
222' &7&\Ll^3-\Ll^2\\
22'2''&7&\Ll^3\\
222+      &7&\Ll^3\\
222-&7&\Ll^3\\

421&6&\Ll^4-\Ll^3+\Ll^2-\Ll\\
331+      &6&\Ll^4+1\\
331-&6&\Ll^4-\Ll^3-\Ll+1\\
322+      &6&\Ll^4-\Ll^3\\
322-      &6&\Ll^4-\Ll^3\\
322'&6&\Ll^4-2\Ll^3+2\Ll^2-2\Ll+1\\
\end{array}$ 

\end{tabular}
\end{table}

\begin{proof}[Proof of \eqref{exceptional} in Theorem~\ref{Voronoi-beta4}]
In view of Lemma~\ref{purity}, determining the cohomology of $E$ is equivalent  to computing its Hodge Euler characteristics, i.e.
$$\eul E=\sum_{k\in\Z}(-1)^k[\cohc Ek],$$
where $[\cdot]$ denotes the class in the Grothendieck group $K_0(\mathsf{HS}_\Q)$ of Hodge structures.
Hodge Euler characteristics are additive, so we are going to work with a locally closed stratification of $E$ and add up the Hodge Euler characteristics to get the result.

The toroidal construction of $\AVOR 4$ yields that $E$ is the union of  toric strata $Z_\sigma$ for all 
cones $\sigma$  belonging to the second Voronoi decomposition but not to the perfect cone decomposition. 
Note that for such $\sigma$ the variety $Z_\sigma$ automatically maps to $\ab 0$ under the 
map $\AVOR 4\rightarrow  \ASAT 4$. Furthermore, up to the action of $\GL(4,\Z)$, one can assume that these 
cones contain the extremal ray  $\la e\ra$ defined in  \eqref{ray} as extremal ray.

\begin{table}
\caption{\label{Voronoi-strata2}$\GL(4,\Z)$-orbits of cones of dimension $\leq 5$ containing $e$}
$\begin{array}{l@{}cl}
\Sigma & \dim\Sigma & \eul{Z_{\sigma}}\\[3pt]
422' &5&\Ll^5+\Ll^3-\Ll^2+\Ll\\
332- &5&\Ll^5-\Ll^4+\Ll^3-3\Ll^2+2\Ll\\
431&5&\Ll^5-\Ll^4+\Ll^3-\Ll^2+\Ll-1\\
422       &5&\Ll^5-\Ll^4\\
332+      &5&\Ll^5-2\Ll^4+\Ll^3-\Ll^2+2\Ll-1\\

432&4&\Ll^6-2\Ll^5+2\Ll^4-4\Ll^3+5\Ll^2-2\Ll\\
333- &4&\Ll^6+2\Ll^2\\
441&4&\Ll^6+\Ll^2\\
333+      &4&\Ll^6-\Ll^5-\Ll^3+2\Ll^2-\Ll\\

433&3&\Ll^7-\Ll^6+\Ll^5-\Ll^4+4\Ll^3-4\Ll^2\\
442&3&\Ll^7+2\Ll^3-\Ll^2\\

443&2&\Ll^8+2\Ll^4-3\Ll^3\\
444&1&\Ll^9-\Ll^4\\\hline
\text{TOT.}^{\phantom{\big\{}}&&\Ll^9+\Ll^8+2\Ll^7+3\Ll^6+3\Ll^5+3\Ll^4+3\Ll^3+2\Ll^2+\Ll+1\\
\end{array}$
\end{table}

Since cones that lie in the same $\GL(4,\Z)$-orbit give the same variety $Z_\sigma$, we have to work with a list of 
representatives of all $\GL(4,\Z)$-orbits of cones fulfilling our conditions. 
Such a list is given in \cite[\S4.4.5]{Vallentin}. As in the proof of Theorem~\ref{Igusa-strata}, 
we compute for each cone $\sigma$ in Vallentin's list the generators of its stabilizer $G_\sigma$ in $\GL(4,\Z)$, 
as well as their action on $\coh{(\C^*)^{10-\dim\sigma}}1$. Then we use the computer algebra program 
Singular \cite{GPS09} to calculate all positive Betti numbers of the quotient $Z_\sigma =(\C^*)^{10-\dim\sigma}/G_\sigma$.
The results are given in Tables~\ref{Voronoi-strata1} and~\ref{Voronoi-strata2}, where we list all 
cones and the Hodge Euler characteristics of the corresponding strata of $E$. 

As already explained, the Hodge Euler characteristic of $E$ is the sum for the Euler characteristics of all strata 
$Z_\sigma$ and is computed at the bottom of Table~\ref{Voronoi-strata2}. In view of Lemma~\ref{purity}, and 
recalling that $\Ll$ is the notation of the weight 2 Tate Hodge structure $\Q(-1)$ in the Grothendieck group of rational
Hodge structures, we can conclude that the Betti numbers of $E$ agree with those given in the statement of 
Theorem~\ref{Voronoi-beta4}. 
\end{proof}

\begin{rem}
Note that the Betti numbers of $E$ satisfy Poincar\'e duality. Indeed, this must be the case as $E$ is smooth up to finite 
group action.
\end{rem}

\appendix
\section{Cohomology of $\ab3$ with coefficients in symplectic local systems}\label{appendix}

In this section, we recollect the information on the cohomology of local systems on $\ab2$ and $\ab3$ that we used in the course of the paper. Let us recall that the cohomology of local systems of odd weight on $\ab g$ vanishes because it is killed by the abelian involution. Therefore, we only need to deal with local systems of even rank.

The cohomology of $\ab2$ and $\ab3$ with constant coefficients is known. The moduli space $\ab2$ is the disjoint union of the moduli space $\M2$ of genus $2$ curves and the locus $\Sym^2{\ab1}$ of products. Since it is known that the rational cohomology of both these spaces vanishes in positive degree, we have
\begin{lem}
The only non-trivial rational cohomology groups with compact support of $\ab2$ are $\cohc{\ab2}4=\Q(-2)$ and $\cohc{\ab2}2=\Q(-1)$.
\end{lem}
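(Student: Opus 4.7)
The plan is to compute $\cohc{\ab 2}{\pu}$ from the natural closed-open decomposition
\[
\ab 2 = \mathcal J_2 \sqcup \Sym^2 \ab 1,
\]
where the open stratum $\mathcal J_2 = \tau_2(\M 2)$ is the Jacobian locus and the closed stratum $\Sym^2\ab 1$ parametrises products of two elliptic curves. The corresponding Gysin long exact sequence in cohomology with compact support,
\[
\cdots \to \cohc{\mathcal J_2}{k} \to \cohc{\ab 2}{k} \to \cohc{\Sym^2 \ab 1}{k} \to \cohc{\mathcal J_2}{k+1} \to \cdots,
\]
reduces the computation to the cohomology of each of the two strata.

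For the product locus, I would observe that the coarse moduli space of $\ab 1$ is the affine line $\mathbb{A}^1$ via the $j$-invariant, so the coarse space of $\Sym^2\ab 1$ is $\Sym^2\mathbb{A}^1 \cong \mathbb{A}^2$; in particular, $\cohc{\Sym^2\ab 1}{\pu}$ is concentrated in top degree $4$, where it equals $\Q(-2)$. For the Jacobian locus, I would invoke the fact that the Torelli map induces an isomorphism on rational cohomology between $\M 2$ and $\mathcal J_2$, combine the stated vanishing of the positive-degree rational cohomology of $\M 2$ with Poincar\'e duality on the smooth $3$-dimensional Deligne--Mumford stack $\M 2$, and conclude that $\cohc{\mathcal J_2}{\pu}$ is concentrated in the top degree, with the expected one-dimensional Tate contribution.

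Plugging these inputs into the long exact sequence, the non-zero contributions of $\mathcal J_2$ and of $\Sym^2\ab 1$ live in disjoint degrees, so every connecting homomorphism $\cohc{\Sym^2\ab 1}{k}\to\cohc{\mathcal J_2}{k+1}$ is automatically zero. The required values of $\cohc{\ab 2}{\pu}$ can therefore be read off directly, and no non-trivial extensions can appear since the surviving contributions are pure Hodge structures of distinct weights. The argument faces no substantive obstacle: once the classical cohomology computations for $\M 2$ and for $\ab 1$ are recalled, the proof is an entirely formal application of the Gysin long exact sequence.
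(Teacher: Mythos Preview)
Your approach is exactly the one the paper uses: the paper merely records, in the sentence preceding the lemma, that $\ab 2=\tau_2(\M 2)\sqcup\Sym^2\ab 1$ and that both strata have vanishing rational cohomology in positive degree, leaving the Gysin/Poincar\'e-duality step implicit. Your write-up simply makes that implicit step explicit.

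One remark: your computation actually yields $\cohc{\ab 2}{6}=\Q(-3)$ (from the $3$-dimensional stratum $\mathcal J_2$) and $\cohc{\ab 2}{4}=\Q(-2)$ (from the $2$-dimensional stratum $\Sym^2\ab 1$), not degrees $4$ and $2$ as the lemma is typeset. This is a typo in the statement; the rest of the paper (e.g.\ the $q=0$ row of Table~\ref{leray_beta2}) uses the values you obtain.
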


The rational cohomology of $\ab3$ was computed by Hain (\cite{Hain}). We state below his result in terms of cohomology with compact support.
\begin{thm}[Hain]\label{hain}
The non-trivial Betti numbers with compact support of $\ab 3$ are
$$
\begin{array}{c|cccc}
i&12&10&8&6\\\hline
b_i&1&1&1&2
\end{array}
$$
Furthermore, all cohomology groups are algebraic with the exception of $\cohc{\ab3}6$, which is an extension of $\Q(-3)$ by $\Q$.
\end{thm}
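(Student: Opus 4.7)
The plan is to follow a stratification-and-assembly approach parallel to (but simpler than) the computation of $\cohc{\overline{\mathcal J_4}}\bullet$ in Lemma~\ref{jacobians}. In genus $3$ every indecomposable principally polarized abelian threefold is the Jacobian of a smooth curve, while every decomposable one is either a product of an indecomposable abelian surface with an elliptic curve or a triple product of elliptic curves. This yields the locally closed stratification
\[
\ab 3 \;=\; \tau_3(\M 3) \;\sqcup\; \bigl(\tau_2(\M 2)\times\ab 1\bigr) \;\sqcup\; \Sym^3\ab 1.
\]

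I would first compute the cohomology with compact support of each stratum. By the general fact recorded in the proof of Lemma~\ref{jacobians} that $\tau_g$ induces a rational cohomology isomorphism between $\M g$ and $\tau_g(\M g)$, the contribution of the open stratum reduces to $\cohc{\M 3}\bullet$ (Looijenga, \cite{Looij}), that of the middle stratum reduces via K\"unneth to $\cohc{\M 2}\bullet$ (Mumford, \cite{Mu-curves}) and $\cohc{\ab 1}\bullet$, and that of the deepest stratum is obtained by taking $\s_3$-invariants in $\cohc{\ab 1^{\times 3}}\bullet$. These feed into the $E_1$-page of the Gysin spectral sequence associated with a closed-set filtration refining the above stratification.

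Next, I would analyse this $E_1$-page. All contributions from the two deeper strata are of pure Tate type in every degree, since $\M 2$, $\ab 1$ and their products have Tate-type cohomology with compact support. The only non-Tate piece to keep track of is Looijenga's weight-$0$ class in the top-degree cohomology of $\M 3$. Since spectral sequence differentials respect mixed Hodge structures, the non-Tate piece can only interact with itself, and the ranks of the remaining $d_1$-differentials between Tate pieces are forced by comparing against the lower bound on the algebraic cohomology coming from the Hodge classes $\lambda_1,\lambda_3$ (as used throughout the paper), together with Poincar\'e duality on the smooth stack $\ab 3$.

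The main obstacle is the identification of the non-algebraic class in $\cohc{\ab 3}{6}$ and the non-splitness of the extension $0\to\Q\to\cohc{\ab 3}{6}\to\Q(-3)\to 0$. The weight-$0$ piece cannot originate from either of the two deeper strata, so it must be the image of Looijenga's class from the top stratum surviving all differentials, while the weight-$6$ piece comes from the algebraic contribution of $\lambda_3$ (or equivalently, by Poincar\'e duality, from the fundamental class of a boundary cycle). Showing that the extension does not split requires the finer Hodge-theoretic input that the weight filtration on $\cohc{\ab 3}\bullet$ induced from the open inclusion $\ab 3 \hookrightarrow \ASAT 3$ does not admit a global splitting in this degree; this is exactly the point at which Hain's original approach through the mixed Hodge structure on the relative completion of the mapping class group in genus $3$ is indispensable, and is the reason we take the result directly from \cite{Hain}.
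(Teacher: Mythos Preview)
The paper does not give its own proof of this theorem: it is simply cited from Hain \cite{Hain}. Your stratification approach is correct and, in fact, yields a complete proof of the statement as recorded here. However, you make the argument sound harder than it is.

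Once you actually write down the $E_1$ page, it is extremely sparse: $\cohc{\Sym^3\ab1}\bullet$ contributes only $\Q(-3)$ in degree $6$, $\cohc{\tau_2(\M2)\times\ab1}\bullet$ contributes only $\Q(-4)$ in degree $8$, and $\cohc{\tau_3(\M3)}\bullet$ contributes $\Q(-6)$, $\Q(-5)$, $\Q$ in degrees $12$, $10$, $6$ respectively (by Looijenga plus Poincar\'e duality). A glance at the positions shows that \emph{every} differential $d_r$ has either trivial source or trivial target, so $E_1=E_\infty$ on the nose. No appeal to the $\lambda$-classes, to Poincar\'e duality on $\ab3$, or to any lower bound is needed to force the differentials; they are all zero for placement reasons. (Also, the class you call ``non-Tate'' is $\Q=\Q(0)$, which is Tate; what you mean is that it is not pure of weight equal to the degree.)

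Your final paragraph about non-splitness addresses a stronger claim than the theorem actually makes. In this paper's conventions (see the Notation section), ``$A+B$'' and ``an extension of $A$ by $B$'' denote \emph{any} extension $0\to B\to E\to A\to 0$; the statement does not assert that the extension is non-trivial. The spectral sequence (or equivalently the Gysin sequence for $\ab3^{\op{red}}\hookrightarrow\ab3$) already gives $0\to\Q\to\cohc{\ab3}6\to\Q(-3)\to 0$, which is exactly what is claimed. The non-splitness is indeed the subtle point in Hain's paper, but it is not part of the statement being proved here, so you do not need to invoke it.
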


We deduce the results we need on non-trivial symplectic local systems with weight $\leq 2$ from results on moduli spaces of curves (\cite{BTM4bar},\cite{OTM32}). Note that the result for $\V_{1,1}$ was already proven in \cite[Lemma 3.1]{HT}.

\begin{lem}\label{m2-wt2}
The cohomology groups with compact support of the weight $2$ symplectic local systems on $\M2$ are as follows: the cohomology of $\V_{2,0}$ vanishes in all degrees, whereas the only non-zero cohomology group with compact support of $\V_{1,1}$ is $\cohc[\V_{1,1}]{\M2}3=\Q$.
\end{lem}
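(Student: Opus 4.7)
The strategy is to extract the cohomology of $\V_{2,0}$ and $\V_{1,1}$ on $\M2$ from the cohomology of suitable pointed moduli spaces via a Leray spectral sequence. Let $\pi\co \mathcal Y:=\Mm21\times_{\M2}\Mm21\to\M2$ denote the fibre square of the universal curve, so that the fibre over $[C]$ is $C\times C$. Since $\coh C1$ induces the standard local system $\V_{1,0}$ on $\M2$, the K\"unneth formula together with $\Sp(4,\Q)$-representation theory give
\[R^2\pi_!\Q\;\cong\;\V_{2,0}\oplus\V_{1,1}\oplus\Q(-1)^{\oplus 3},\]
while the remaining $R^q\pi_!\Q$ split into $\V_{1,0}$-isotypic factors and pure Tate local systems whose ranks can be read off from $\coh{C\times C}q$.

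First, I would compute $\cohc{\mathcal Y}{\bullet}$ by means of the Gysin long exact sequence associated with the closed embedding of the diagonal $\Delta\cong\Mm21$ in $\mathcal Y$, whose open complement is $\Mm22$. The cohomology with compact support of $\Mm22$ is known from Tommasi's work \cite{OTM32}, while that of $\Mm21$ is deduced from Mumford's computation $\coh{\M2}\bullet\cong\Q[\lambda_1]/(\lambda_1^2)$ together with a further Leray spectral sequence for the universal curve.

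Second, I would analyse the Leray spectral sequence $E_2^{p,q}=\cohc[R^q\pi_!\Q]{\M2}p\Rightarrow\cohc{\mathcal Y}{p+q}$. The differentials must respect the decomposition into irreducible $\Sp(4,\Q)$-local systems, so only differentials internal to each isotypic component can be non-zero. Using the known cohomology of $\M2$ with constant coefficients and with coefficients in $\V_{1,0}$-isotypic local systems, one subtracts the understood contributions from $\cohc{\mathcal Y}\bullet$ and extracts $\cohc[\V_{2,0}\oplus\V_{1,1}]{\M2}p$ in each degree.

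The main obstacle is then to disentangle $\V_{2,0}$ from $\V_{1,1}$, since both occur as summands of $R^2\pi_!\Q$. To separate them I would exploit the involution $\tau$ on $\mathcal Y$ swapping the two factors: it acts trivially on $\V_{2,0}$ and by $-1$ on $\V_{1,1}$. Performing the same analysis on the symmetric quotient $\mathcal Y/\tau$, whose relevant higher direct image retains only $\V_{2,0}$, therefore isolates the two contributions separately. A final consistency check against the Hodge Euler characteristic computations of Bergstr\"om--Tommasi \cite{BTM4bar} confirms the stated Betti numbers and rules out spurious cancellations in the spectral sequence.
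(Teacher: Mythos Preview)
Your approach is essentially the same as the paper's: extract the weight-$2$ local system cohomology from a Leray spectral sequence for a $2$-pointed moduli space over $\M2$, and separate $\V_{2,0}$ from $\V_{1,1}$ via the $\s_2$-action. The paper streamlines this in two ways. First, it works directly with $p_2\co\Mm22\to\M2$ (fibre the configuration space of two distinct points) together with its $\s_2$-equivariant cohomology, so the invariant and alternating parts of $R^2{p_2}_!\Q$ already isolate $\V_{1,1}$ and $\V_{2,0}$ respectively; your detour through $\mathcal Y=C\times C$ and a Gysin sequence for the diagonal is unnecessary. Second, the paper notes that the cohomology of $\V_{1,0}$ on $\M2$ vanishes because the hyperelliptic involution acts by $-1$ on the fibres of the universal curve, which removes all odd-weight contributions from the spectral sequence at once; you treat this as ``known'' without saying why.

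Two small corrections: the rational cohomology of $\M2$ is $\Q$ concentrated in degree $0$, not $\Q[\lambda_1]/(\lambda_1^2)$; and the cohomology of $\Mm22$ is taken from \cite{OT-thesis}, not \cite{OTM32} (which treats $\Mm32$).
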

\begin{lem}\label{m3-wt2}
The rational cohomology of $\M3$ with coefficients in $\V_{1,0,0}$ and $\V_{2,0,0}$ is $0$ in all degrees.
The only non-trivial cohomology group with compact support of $\M3$ with coefficients in $\V_{1,1,0}$ is $\cohc[\V_{1,1,0}]{\M3}9=\Q(-5)$. 
\end{lem}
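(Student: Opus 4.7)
The plan is to deduce these cohomology groups from the known cohomology of the moduli spaces of pointed curves $\Mm3n$ for $n=1,2$, by means of the Leray spectral sequences associated with the natural forgetful maps $\pi_n\co\Mm3n\to\M3$. The point is that the symplectic local systems of interest occur as direct summands of the higher direct images $R^q{\pi_n}_*\Q$; specifically, $R^1{\pi_1}_*\Q=\V_{1,0,0}$, and the K\"unneth decomposition gives
\[
R^2{\pi'}_*\Q=\V_{1,0,0}^{\otimes 2}\oplus\Q(-1)^{\oplus 2}
\]
for the fibered square $\pi'\co\Mm31\times_{\M3}\Mm31\to\M3$, in which the tensor square further decomposes under $\Sp(6,\Q)$ as $\V_{1,0,0}^{\otimes 2}=\V_{2,0,0}\oplus\V_{1,1,0}\oplus\Q(-1)$, the last summand corresponding to the symplectic form on $\V_{1,0,0}$.

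For the vanishing of $\coh[\V_{1,0,0}]{\M3}\pu$, I would compare the cohomology of $\Mm31$ --- which is generated by tautological classes $\lambda_1$ (pulled back from $\M3$) and $\psi_1$ --- with the Leray spectral sequence for $\pi_1$. Since the cohomology of $\M3$ is known by Looijenga \cite{Looij} and contributes only to the rows $q=0$ and $q=2$ of the $E_2$ page, the total dimensions of the cohomology of $\Mm31$ are already accounted for by these two rows, forcing the row $q=1$ --- which computes $\coh[\V_{1,0,0}]{\M3}\pu$ --- to vanish identically. The analogous vanishing of $\coh[\V_{2,0,0}]{\M3}\pu$ will follow at the end of the argument below.

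For $\V_{2,0,0}$ and $\V_{1,1,0}$ one uses the fibered square $\pi'$. The total space $\Mm31\times_{\M3}\Mm31$ is related to $\Mm32$ via a Gysin long exact sequence whose third term involves the cohomology of the universal diagonal $\Mm31\hookrightarrow\Mm31\times_{\M3}\Mm31$. Given the cohomology of $\Mm32$ computed in \cite{BTM4bar,OTM32} (and of $\Mm31$ and $\M3$ obtained above), the Leray spectral sequence for $\pi'$ together with the decomposition recalled above yields the cohomology of $\M3$ with coefficients in $\V_{2,0,0}\oplus\V_{1,1,0}$, once the known constant contributions $\Q(-1)^{\oplus 3}$ are subtracted.

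The main obstacle is to split this last piece into its two irreducible summands and thus to isolate the class $\Q(-5)$ in $\cohc[\V_{1,1,0}]{\M3}9$. This separation can be achieved geometrically by the $\s_2$-action on $\Mm32$ swapping the two marked points: it acts as $+1$ on $\Sym^2\V_{1,0,0}=\V_{2,0,0}$ and as $-1$ on $\wedge^2\V_{1,0,0}=\V_{1,1,0}\oplus\Q(-1)$, so that the symmetric and alternating parts of the relevant Leray pieces identify the $\V_{2,0,0}$- and $(\V_{1,1,0}\oplus\Q(-1))$-contributions respectively; the residual $\Q(-1)$ inside the alternating part is then subtracted using the summand of $R^2{\pi'}_*\Q$ coming from the symplectic form. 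A final Hodge-theoretic weight check, using the purity properties of the cohomology of $\Mm32$ in low degree, is needed to confirm that the surviving Tate-type class in $\cohc[\V_{1,1,0}]{\M3}9$ has weight exactly $10$, i.e.\ is isomorphic to $\Q(-5)$.
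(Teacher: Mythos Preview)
Your overall strategy is the paper's: deduce everything from the known cohomology of $\Mm31$ and $\Mm32$ via the Leray spectral sequences for the forgetful maps, and separate $\V_{2,0,0}$ from $\V_{1,1,0}$ using the $\s_2$-action. (The paper works directly with $p_2\co\Mm32\to\M3$, whose fibre is the configuration space of ordered pairs of points on the curve, rather than passing through the fibered square and a Gysin sequence; this is a little cleaner but the two routes are equivalent.)

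There is, however, a sign error that would make your computation come out wrong. The involution swapping the two factors of $C\times C$ acts on $H^1(C)\otimes H^1(C)\subset H^2(C\times C)$ by
\[
\alpha\otimes\beta\ \longmapsto\ (-1)^{|\alpha|\,|\beta|}\,\beta\otimes\alpha\ =\ -\,\beta\otimes\alpha,
\]
because $H^1$ sits in odd cohomological degree (Koszul sign rule for the cup product, or equivalently for the K\"unneth isomorphism). Hence the $\s_2$-\emph{invariant} part of $H^1\otimes H^1$ is $\bigwedge^2\V_{1,0,0}=\V_{1,1,0}\oplus\Q(-1)$, and the $\s_2$-\emph{alternating} part is $\Sym^2\V_{1,0,0}=\V_{2,0,0}$ --- exactly the opposite of what you write. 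With your signs you would end up placing the surviving $\Q(-5)$ class in $\cohc[\V_{2,0,0}]{\M3}9$ rather than in $\cohc[\V_{1,1,0}]{\M3}9$, contradicting the statement of the lemma. Once the sign is corrected, your argument goes through and matches the paper's: $\V_{1,1,0}$ is read off from the $\s_2$-invariant part of $\coh{\Mm32}\pu$, and $\V_{2,0,0}$ from the alternating part.
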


\begin{proof}[Proof of Lemma~\ref{m3-wt2}]
Following the approach of \cite{G-2}, we use the forgetful maps $p_1\co \Mm 31\rightarrow \M3$ and $p_2\co \Mm 32\rightarrow \M3$ to obtain information. 
Note that $p_1$ is the universal curve over $\M3$ and that the fibre of $p_2$ is the configuration space of $2$ distinct points on a genus $3$ curve. 

According to \cite[Cor.~1]{BTM4bar}, there is an isomorphism $\coh{\Mm31}\pu\cong\coh{\M3}\pu\otimes\coh{\Pp1}\pu$ as vector spaces with mixed Hodge structures. If we compare this with the Leray spectral sequence in cohomology associated with $p_1$, we get that the cohomology of $\M3$ with coefficients in $\V_{1,0,0}$ must vanish.

Next, we analyze the Leray spectral sequence in cohomology %
 associated with $p_2$. Taking the $\s_2$-action into account, the cohomology of the fibre of $p_2$ induces the following local systems on $\M3$:

$$
\begin{array}{l|l|l}
&\text{local system:}&\text{local system:}\\
\text{deg.}&\text{invariant part}&\text{alternating part}\\\hline
0 &\Q&0\\
1 & \V_{1,0,0} & \V_{1,0,0}\\
2 & \Q(-1)\oplus\V_{1,1,0}&\Q(-1)\oplus\V_{2,0,0}\\
3 & 0&\V_{1,0,0}(-1)
\end{array}
$$

This implies that the cohomology of $\M3$ with coefficients in $\V_{2,0,0}$ (respectively, in $\V_{1,1,0}$) is strictly related to the $\s_2$-alternating (resp. $\s_2$-invariant) part of the cohomology of $\Mm32$. 
The rational cohomology of $\Mm 32$ is described with its mixed Hodge structures and the action of the symmetric group in \cite[Thm~1.1]{OTM32}. By comparing this with the $E_2$-term of the Leray spectral sequence associated with $p_2$, one obtains that the cohomology of $\V_{2,0,0}$ vanishes and that the only non-trivial cohomology group of $\V_{1,1,0}$ is $\coh[\V_{1,1,0}]{\M3}3=\Q(-3)$. Then the claim follows from Poincar\'e duality.
\end{proof}

\begin{proof}[Proof of Lemma~\ref{m2-wt2}]
The proof is analogous to that of Lemma~\ref{m3-wt2}. In this case, one needs to compare the Leray spectral sequence associated with $p_2:\;\Mm22\rightarrow\M2$ with the cohomology of $\Mm22$ computed in \cite[II,2.2]{OT-thesis}. Note that in this case the cohomology of $\V_{1,0}$ vanishes because it is killed by the hyperelliptic involution on the universal curve over $\M2$. 
\end{proof}

Next, we compute the cohomology of the weight $2$ local systems on $\ab2$ and $\ab3$ we are interested in, by using Gysin long exact sequences in cohomology with compact support and the stratification
$\ab2 = \tau_2(\M2)\sqcup \Sym^2{\ab1}$ of $\ab2$, respectively, the stratification
$\ab3 = \tau_3(\M3)\sqcup\tau_2(\M2)\times\ab1 \sqcup\Sym^3\ab1$
of $\ab3$. 
The result on the cohomology with compact support of $\V_{1,1}$ was already proved in \cite[Lemma 3.1]{HT}.

\begin{lem}\label{ab2-wt2}
The only non-trivial cohomology groups of $\ab2$ with coefficients in a local system of weight $2$ are $\cohc[\V_{1,1}]{\ab2}3=\Q$ and $\cohc[\V_{2,0}]{\ab2}3=\Q(-1)$.
\end{lem}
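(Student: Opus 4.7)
The strategy is to use the stratification $\ab2 = \tau_2(\M2)\sqcup\Sym^2\ab1$ (with $\Sym^2\ab1$ closed) and the associated Gysin long exact sequence in cohomology with compact support
$$\cdots\to \cohc[L]{\tau_2(\M2)}{k}\to\cohc[L]{\ab2}{k}\to\cohc[L]{\Sym^2\ab1}{k}\to\cohc[L]{\tau_2(\M2)}{k+1}\to\cdots$$
for $L=\V_{2,0}$ and $L=\V_{1,1}$. The case $L=\V_{1,1}$ has already been treated in \cite[Lemma 3.1]{HT}, so we concentrate on $L=\V_{2,0}$ but sketch both cases uniformly for the reader.

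First I would handle the open stratum. The Torelli map $\tau_2\co\M2\to\ab2$ is a $\mu_2$-gerbe onto its image (the hyperelliptic involution acts as $-1$ on $R^1\pi_*\Q$), and this $\mu_2$ therefore acts trivially on any local system of even weight. Hence $\cohc[L]{\tau_2(\M2)}{\pu}=\cohc[L]{\M2}{\pu}$ for $L=\V_{2,0},\V_{1,1}$, which by Lemma~\ref{m2-wt2} gives $\cohc[\V_{1,1}]{\tau_2(\M2)}{3}=\Q$ (zero in all other degrees) and $\cohc[\V_{2,0}]{\tau_2(\M2)}{\pu}=0$.

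Next I would compute the cohomology of the closed stratum. Writing $\pi\co(\ab1)^2\to\Sym^2\ab1$ for the $\s_2$-cover, one has $\cohc[L]{\Sym^2\ab1}{\pu}=\cohc[\pi^*L]{(\ab1)^2}{\pu}{}^{\s_2}$. Branching $\Sp(4)\supset\Sp(2)\times\Sp(2)$ along $V=V_1\oplus V_2$ yields the $\s_2$-equivariant decompositions
$$\pi^*\V_{2,0}\cong \V_2\boxtimes\Q\,\oplus\, \V_{1,0}\boxtimes\V_{1,0}\,\oplus\,\Q\boxtimes\V_2,\qquad
\pi^*\V_{1,1}\cong (V_1\boxtimes V_2)\oplus\Q(-1),$$
where $\s_2$ swaps the outer summands in the first decomposition, swaps factors in $\V_{1,0}\boxtimes\V_{1,0}$, and acts by the sign character on the $\Q(-1)$ summand of $\pi^*\V_{1,1}$ (spanned by $\omega_1-\omega_2$). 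Recalling $\cohc[\V_{1,0}]{\ab1}{\pu}=0$, $\cohc[\V_2]{\ab1}{1}=\Q$ and $\cohc{\ab1}{2}=\Q(-1)$, the K\"unneth formula leaves a single non-vanishing contribution for $\V_{2,0}$, namely the $\s_2$-invariants of $\Q(-1)\oplus\Q(-1)$ in degree $3$, giving $\Q(-1)$; for $\V_{1,1}$ the only candidate is $\cohc{(\ab1)^2}{4}\otimes\Q(-1)=\Q(-3)$, on which the sign action on the local system combined with the trivial swap-action on $\cohc{\ab1}{2}\otimes\cohc{\ab1}{2}$ forces the invariants to vanish. Hence $\cohc[\V_{2,0}]{\Sym^2\ab1}{3}=\Q(-1)$ and $\cohc[\V_{1,1}]{\Sym^2\ab1}{\pu}=0$.

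Plugging into the Gysin sequences, everything collapses: for $\V_{1,1}$ the cohomology of $\ab2$ is inherited from $\M2$; for $\V_{2,0}$ it is inherited from $\Sym^2\ab1$ (the connecting map to $\cohc[\V_{2,0}]{\tau_2(\M2)}{4}=0$ is automatically zero), yielding the two stated groups. The only subtle point in the whole argument is the sign character on the $\Q(-1)$ summand of $\pi^*\V_{1,1}$: without it one would spuriously obtain a class in $\cohc[\V_{1,1}]{\ab2}{4}$, so the careful bookkeeping of the $\s_2$-equivariant structure is the main (albeit minor) obstacle.
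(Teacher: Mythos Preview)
Your proof is correct and follows essentially the same approach as the paper's: both use the stratification $\ab2=\tau_2(\M2)\sqcup\Sym^2\ab1$, Lemma~\ref{m2-wt2} for the open stratum, branching to $\Sp(2)\times\Sp(2)$ for the closed stratum, and the Gysin sequence to combine them. Your account is more detailed (you spell out the full branching including the $\V_1\boxtimes\V_1$ summand and the sign character on the $\Q(-1)$ piece of $\pi^*\V_{1,1}$), whereas the paper suppresses the cohomologically trivial summands and refers to \cite{BvdG} for the branching formulae.
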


\proof
Using branching formulae as in \cite[\S\S7--8]{BvdG}, one proves that the restriction of $\V_{2,0}$ to $\Sym^2\ab1\subset\ab2$ coincides with the symmetrization of $\V_2\times\V_0$ on $\ab1\times\ab1$. Its cohomology with compact support is then $\Q(-1)$ in degree $3$ and trivial in all other degrees by e.g. \cite[Thm.~5.3]{G-M1n}. Analogously, one shows that the cohomology of $\Sym^2\ab1$ with coefficients in the restriction of the local system $\V_{1,1}$ is trivial. Then the claim follows from the Gysin long exact sequence associated with the inclusion $\Sym^2\ab1\subset\ab2$.
\qed

\begin{lem}\label{ab3-v110}
The cohomology with compact support of $\ab3$ in the local system $\V_{1,1,0}$ is non-trivial only in degree $5$ and possibly in degrees $8$ and $9$ and is given in these degrees by $\cohc[\V_{1,1,0}]{\ab3}5=\Q(-1)$ and $\cohc[\V_{1,1,0}]{\ab3}8\cong\cohc[\V_{1,1,0}]{\ab3}9=\Q(-4)^{\oplus\epsilon}$ with $\epsilon\in\{0,1\}$.
\end{lem}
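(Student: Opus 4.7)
The plan is to follow the strategy of Lemma~\ref{ab2-wt2}, applying Gysin long exact sequences in cohomology with compact support to the stratification
$$
\ab3 = \tau_3(\M3)\ \sqcup\ \bigl(\tau_2(\M2)\times\ab1\bigr)\ \sqcup\ \Sym^3\ab1,
$$
and to feed in the information on $\M3$ coming from Lemma~\ref{m3-wt2}. The Torelli map in genus $3$ being an isomorphism in rational cohomology, Lemma~\ref{m3-wt2} gives immediately $\cohc[\V_{1,1,0}]{\tau_3(\M3)}9=\Q(-5)$ and zero otherwise.

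First I would compute the cohomology of the two boundary strata. For the closed stratum $\Sym^3\ab1$, I would use the branching rule $\Sp(6,\Q)\supset\Sp(2,\Q)^{\times3}\rtimes\s_3$ to decompose the restriction of $\V_{1,1,0}$ into $\s_3$-equivariant sums of exterior tensor products $\V_{a_1}\boxtimes\V_{a_2}\boxtimes\V_{a_3}$ of symplectic local systems on $\ab1^{\times3}$, and then extract the $\s_3$-invariants. Combining the K\"unneth formula with the classical fact that the only non-trivial group $\cohc[\V_{2k}]{\ab1}\pu$ is $\cohc[\V_{2k}]{\ab1}1=\Q(-k-1)$ for $k\geq 1$ (see e.g.~\cite[Thm.~5.3]{G-M1n}), this computation is routine. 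For the intermediate stratum $\tau_2(\M2)\times\ab1\cong\M2\times\ab1$, I would similarly use the branching rule $\Sp(6,\Q)\supset\Sp(4,\Q)\times\Sp(2,\Q)$ to write the restriction as a sum of external tensor products $\V_\mu\boxtimes\V_\nu$ with $|\mu|+|\nu|\leq2$, then apply K\"unneth together with Lemma~\ref{m2-wt2} and the known constant-coefficient cohomology of $\M2$ on the first factor, and the above recollection on $\ab1$ on the second factor.

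I would then chase the two Gysin long exact sequences
\begin{multline*}
\cdots\rightarrow\cohc[\V_{1,1,0}]{\Sym^3\ab1}{k}
\rightarrow\cohc[\V_{1,1,0}]{\ab3\setminus\tau_3(\M3)}{k}\\
\rightarrow\cohc[\V_{1,1,0}]{\tau_2(\M2)\times\ab1}{k}
\rightarrow\cdots
\end{multline*}
and
$$
\cdots\rightarrow\cohc[\V_{1,1,0}]{\ab3\setminus\tau_3(\M3)}{k}
\rightarrow\cohc[\V_{1,1,0}]{\ab3}{k}
\rightarrow\cohc[\V_{1,1,0}]{\tau_3(\M3)}{k}\rightarrow\cdots
$$
to splice these into the required statement. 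Hodge-theoretic purity constraints (morphisms between pure Hodge structures of distinct weights must vanish) will force most of the connecting homomorphisms to be determined, and should isolate the contribution $\Q(-1)$ in degree~$5$ coming from the boundary strata.

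The main obstacle is the fate of the single class $\Q(-5)$ in $\cohc[\V_{1,1,0}]{\tau_3(\M3)}9$ in the second Gysin sequence. There is a potentially matching class of the same weight arising from the boundary strata in degree $8$, and whether the two cancel via the connecting homomorphism $\cohc[\V_{1,1,0}]{\tau_3(\M3)}9\to\cohc[\V_{1,1,0}]{\ab3\setminus\tau_3(\M3)}{10}$ cannot be decided on purity or representation-theoretic grounds alone. This is exactly the ambiguity reflected in the parameter $\epsilon\in\{0,1\}$ in the statement; up to this single unknown rank, all other differentials in both Gysin sequences are pinned down by weight considerations, yielding the claimed answer.
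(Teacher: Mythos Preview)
Your strategy is exactly the paper's: stratify $\ab3$ into $\tau_3(\M3)$, $\tau_2(\M2)\times\ab1$, and $\Sym^3\ab1$, compute the boundary pieces via branching and K\"unneth, feed in Lemma~\ref{m3-wt2} for the open stratum, and isolate the single undetermined differential between the two copies of $\Q(-5)$.

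There is, however, a slip in the way you wrote both Gysin sequences: you have the roles of the open and closed strata reversed. In cohomology with compact support the sequence for a closed $Z\hookrightarrow X$ with open complement $U$ reads
\[
\cdots\rightarrow\cohc{U}k\rightarrow\cohc{X}k\rightarrow\cohc{Z}k\rightarrow\cohc{U}{k+1}\rightarrow\cdots,
\]
but $\tau_3(\M3)$ is the \emph{open} stratum of $\ab3$ and $\Sym^3\ab1$ is the \emph{closed} stratum of $\ab3^{\op{red}}:=\ab3\setminus\tau_3(\M3)$, so both of your displayed sequences are backwards. Consequently the undetermined connecting map is not $\cohc[\V_{1,1,0}]{\tau_3(\M3)}9\to\cohc[\V_{1,1,0}]{\ab3^{\op{red}}}{10}$ (indeed $\ab3^{\op{red}}$ is $4$-dimensional, so this target vanishes) but rather
\[
\cohc[\V_{1,1,0}]{\ab3^{\op{red}}}8=\Q(-5)\longrightarrow\cohc[\V_{1,1,0}]{\tau_3(\M3)}9=\Q(-5),
\]
exactly as in the paper's proof. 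Once this is corrected, your argument goes through and matches the paper's.
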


\proof
Branching formulae yield that the cohomology with compact support of the restriction of $\V_{1,1,0}$ to  $\tau_2(\M 2)\times\ab1$ is equal to $\Q(-5)$ (coming from the local system $\V_{1,1}\otimes\V_0$ on $\M2\times\ab1$) in degree $8$, to $\Q(-1)$ in degree $5$ (coming from the local system $\V_0\otimes\V_0(-1)$) and is trivial in all other degrees. Moreover, the restriction of $\V_{1,1,0}$ to $\Sym^3\ab1$ is trivial, as is easy to prove if one looks at the cohomology of the restriction of the universal abelian variety over $\ab3$ to $\Sym^3\ab1$.

It remains to consider the Gysin long exact sequence associated with the closed inclusion $\ab3^\op{red}\subset \ab3$. The only differential which can possibly be non-trivial is
$$
\Q(-5)=\cohc[\V_{1,1,0}]{\ab3^\op{red}}8\longrightarrow\cohc[\V_{1,1,0}]{\M3}9=\Q(-5).
$$
From this the claim follows.
\qed

In the investigation of the cohomology with compact support of the locus $\beta_2^0$ of semi-abelic varieties of torus rank $2$ we also need to consider the cohomology with compact support of the weight $4$ local system $\V_{2,2}$ on $\ab2$. In the forthcoming preprint \cite{OTM24}, we will show that the cohomology of $\V_{2,2}$ vanishes in all degrees. 
 For our application, however, we do not need such a complete result. The following lemma suffices:

\begin{lem}\label{ab2-v22}
The cohomology with compact support of $\ab2$ with coefficients in the local system $\V_{2,2}$ is $0$ in all degrees different from $3,4$. Furthermore, for every weight $k$ there is an isomorphism $$\operatorname{Gr}^W_k(\cohc[\V_{2,2}]{\ab2}3)\cong\operatorname{Gr}^W_k(\cohc[\V_{2,2}]{\ab2}4)$$ 
between the graded pieces of the weight filtration.
\end{lem}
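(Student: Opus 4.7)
The plan is to combine the Gysin long exact sequence associated with the stratification $\ab2 = \M2\sqcup\Sym^2\ab1$ with branching rules for symplectic local systems and the known rational cohomology of moduli spaces of pointed genus-$2$ curves.

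First I would compute $\cohc[\V_{2,2}]{\Sym^2\ab1}\bullet$ by passing to the degree-$2$ quotient $\pi\co\ab1\times\ab1\to\Sym^2\ab1$ induced by the product map $\ab1\times\ab1\to\ab2$. Using the branching rules for $\Sp(4)\supset\Sp(2)\times\Sp(2)$ (as in \cite[\S\S7--8]{BvdG}), one decomposes $\pi^*\V_{2,2}$ into a direct sum of external tensor products of the form $\V_{2a}\boxtimes\V_{2b}$. Keeping track of the induced $\s_2$-action that swaps the two factors, K\"unneth together with the known cohomology of the local systems $\V_{2k}$ on $\ab1$ yields a complete description of $\cohc[\V_{2,2}]{\Sym^2\ab1}\bullet$. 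Since $\Sym^2\ab1$ has complex dimension $2$, this cohomology is automatically concentrated in a small range of degrees.

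Second, I would compute $\cohc[\V_{2,2}]{\M2}\bullet$ following the approach of Lemmas~\ref{m2-wt2} and~\ref{m3-wt2}. Consider a forgetful morphism $p_n\co\Mm2n\to\M2$ for a suitable $n$ (in practice $n=4$) and analyze the Leray spectral sequence in cohomology. The cohomology of the fibre admits a decomposition into symplectic local systems on $\M2$ via branching, among which $\V_{2,2}$ appears with multiplicity that can be read off from the relevant Schur polynomials. Plugging in the known rational cohomology of $\Mm2n$ together with its $\s_n$-action from \cite[II,2.2]{OT-thesis} and related sources, one then extracts $\cohc[\V_{2,2}]{\M2}\bullet$ as an appropriate isotypic component.

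Third, I would assemble these in the Gysin long exact sequence
\begin{equation*}
\cdots\to\cohc[\V_{2,2}]{\M2}k\to\cohc[\V_{2,2}]{\ab2}k\to\cohc[\V_{2,2}]{\Sym^2\ab1}k\to\cohc[\V_{2,2}]{\M2}{k+1}\to\cdots
\end{equation*}
and use vanishing in low degrees (coming from the virtual cohomological dimension of $\Sp(4,\Z)$ combined with Poincar\'e--Lefschetz duality on the orbifold $\ab2$) and in high degrees (via duality plus vanishing of $\Sp(4,\Z)$-invariants of the non-trivial irreducible representation $V_{2,2}$) to conclude that $\cohc[\V_{2,2}]{\ab2}k$ vanishes for $k\notin\{3,4\}$. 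The isomorphism of weight-graded pieces in degrees $3$ and $4$ then follows by computing the Hodge Euler characteristic of $\V_{2,2}$ on $\ab2$ additively over the stratification and checking that it vanishes in the Grothendieck group of mixed Hodge structures; this forces $[\cohc[\V_{2,2}]{\ab2}3]=[\cohc[\V_{2,2}]{\ab2}4]$.

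The hardest step will be the second one: isolating the $\V_{2,2}$-isotypic component in the Leray spectral sequence for $p_n$ requires careful bookkeeping through branching rules and $\s_n$-characters, and it is not clear a priori that the contributions can be cleanly separated in each spectral sequence degree. A cleaner alternative, which avoids this difficulty entirely for the \emph{Euler characteristic} part of the statement, would be to invoke an Eichler--Selberg-type trace formula for vector-valued Siegel modular forms of genus $2$ to compute $e_c(\ab2,\V_{2,2})$ directly, and then combine this with the vanishing statement outside degrees $3$ and $4$ to deduce the required isomorphism of graded pieces.
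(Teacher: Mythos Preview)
Your overall plan---stratify $\ab2=\tau_2(\M2)\sqcup\Sym^2\ab1$, compute on each piece, and deduce the graded-pieces isomorphism from vanishing of the Hodge Euler characteristic---matches the paper. The differences lie in how each step is executed, and the paper's choices are substantially cheaper.

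For the vanishing outside degrees $3,4$, the paper does not use the virtual cohomological dimension of $\Sp(4,\Z)$ (which is $4$ and would only give $\cohc[\V_{2,2}]{\ab2}k=0$ for $k\leq1$). Instead it observes that $\M2$ and $\Sym^2\ab1$ are \emph{affine} of dimensions $3$ and $2$, so ordinary cohomology of $\ab2$ with coefficients in any local system vanishes above degree $3$, whence $\cohc[\V_{2,2}]{\ab2}k=0$ for $k<3$ by Poincar\'e duality. At the top, $H^6_c$ vanishes because $\V_{2,2}$ has no invariants, and $H^5_c$ vanishes because $H^1(\ab2;\V_{2,2})=0$ by Raghunathan's rigidity theorem \cite{Raghunathan}. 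Your vcd/invariants argument as stated leaves degrees $2$ and $5$ uncovered; you would have to fall back on the explicit computation to close those gaps.

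For the Euler characteristic, the paper bypasses your hard Step~2 entirely. It does not attempt to compute $\cohc[\V_{2,2}]{\M2}\bullet$ via the Leray spectral sequence for $\Mm24\to\M2$; it simply cites Bergstr\"om's point-counting result \cite[Theorem~11.6]{Bergstrom-hyper}, which gives $e_c(\M2,\V_{2,2})=-[\Q(-4)]$ directly. On the closed stratum, branching identifies the only contribution as $\V_0\boxtimes\V_0(-2)$, yielding $e_c(\Sym^2\ab1,\V_{2,2})=[\Q(-4)]$, and additivity finishes. Your Eichler--Selberg alternative is in the same spirit, but citing \cite{Bergstrom-hyper} is the shortest route here.
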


\proof
First, we prove that the result holds in the Grothendieck group of rational Hodge structures. This requires to prove that the Euler characteristic of $\cohc[\V_{2,2}]{\ab2}\pu$ in the Grothendieck group of rational Hodge structures vanishes. By branching formulae, the cohomology with compact support of the restriction of $\V_{2,2}$ to $\Sym^2\ab1$ is equal to the cohomology of the local system $\V_0\otimes\V_0(-2)$, which is equal to $\Q(-4)$ in degree $4$ and trivial otherwise. On the other hand, the Euler characteristic of $\cohc[\V_{2,2}]{\M2}\pu$ was proved in \cite[Theorem 11.6]{Bergstrom-hyper} to be equal to $-[\Q(-4)]$. Then the additivity of Euler characteristics ensures that the Euler characteristic of $\V_{2,2}$ on $\M2$ vanishes. This means that the Euler characteristic of each graded piece of the weight filtration on $\cohc[\V_{2,2}]{\ab2}\pu$ is $0$. 

More generally, the fact that $\M2$ and $\Sym^2\ab1$ are affine of dimension $3$ and $2$ respectively, combined with the Gysin long exact sequence associated to $\Sym^2\ab1\hookrightarrow\ab2$  implies that the cohomology of $\ab2$ with values in any local system is trivial in degree greater than $3$. Thus, by Poincar\'e duality, the cohomology with compact support of $\ab2$ can be non-trivial only in degree larger than or equal to $3$. Furthermore, for non-trivial irreducible local systems $H^0$ (and hence $H^6_c$) vanishes, whereas $H^1$ (and hence $H^5_c$) is always zero by the Raghunathan rigidity theorem \cite{Raghunathan}. This means that the cohomology with compact support of $\V_{2,2}$ on $\ab2$ can be non-zero only in degrees $3$ and $4$. The cohomology groups in these degrees are then isomorphic when passing to the associated graded pieces of the weight filtration as a consequence of the vanishing of the Euler characteristic in the Grothendieck group of Hodge structures.
\qed

\bibliographystyle{amsalpha}

\end{document}